\documentclass[smallextended]{svjour3}
\usepackage{amsmath, amssymb}
\usepackage{newtxtext,newtxmath}

\usepackage{enumerate, color}
\usepackage{caption}
\usepackage{subcaption}
\usepackage{graphicx}
\usepackage{ulem} 
\usepackage{multirow}
\usepackage{makecell}
\usepackage{url}     
\usepackage{framed}  
\usepackage[figuresright]{rotating} 
\usepackage{float}   
\usepackage{longtable}
\usepackage{tabularx}
\usepackage{array}
\usepackage{natbib}  
\usepackage[colorlinks=true,citecolor=blue]{hyperref}
\usepackage[misc]{ifsym} 

\smartqed

\newtheorem{assumption}{Assumption}
\numberwithin{equation}{section}

\title{Square-Root Clustered Lasso for High-Dimensional Linear Regression with Finite-Sample Theory and Efficient Algorithm Implementation}
\titlerunning{Square-Root Clustered Lasso}

\author{Shuqi Wang \and Yunhai Xiao\,{\normalfont\Letter}\thanks{Corresponding author, ORCID: \href{https://orcid.org/0000-0002-7503-4585}{0000-0002-7503-4585}} \and Mengjiao Shi}

\institute{Shuqi Wang \and Mengjiao Shi \at
	School of Mathematics and Statistics, Henan University, Kaifeng 475000, P.R. China\\
	\email{sqwang@henu.edu.cn},\,\email{smj@henu.edu.cn}
	\and
	Yunhai Xiao \at
	Center for Applied Mathematics of Henan University, Henan University, Zhengzhou 450046, P.R. China\\
	\email{yhxiao@henu.edu.cn}
}

\begin{document}
	\maketitle
	\begin{abstract}
	This paper proposes a square-root clustered Lasso penalized method for high-dimensional linear regression. By integrating square-root loss with the clustered Lasso penalty, the resulting method enables flexible regression modeling and allows tuning parameter selection independent of the unknown noise level. We establish weak decomposability of the clustered Lasso penalty and a noise-level-free tuning strategy under Gaussian noise. Furthermore, we establish the finite-sample prediction accuracy guarantees for the proposed method.
	The inherent nonsmoothness of the square-root loss and clustered Lasso penalty poses considerable computational difficulties. To resolve this issue, we develop an efficient semismooth Newton-based proximal augmented Lagrangian algorithm (N-ALM) to solve the corresponding dual optimization problem. The subproblems arising in the iteration are efficiently tackled by the semismooth Newton method, which fully utilizes the sparsity structure of the generalized Jacobian. We further establish global convergence and a local linear rate for the distance to the KKT solution set under suitable conditions; the local rate becomes superlinear when the penalty parameter tends to infinity.
	We conduct numerical experiments on real-world datasets to evaluate the computational performance of N-ALM. The results demonstrate that N-ALM solves the tested high-dimensional SRCL problems accurately and substantially reduces running times compared with d-ADMM.
	\end{abstract}
	
	\keywords{square-root clustered Lasso, square-root loss function, weakly decomposable norm, prediction error bound, semismooth Newton method}
	
	\section{Introduction}\label{intro}
	\setcounter{equation}{0}
    In contemporary statistics and machine learning studies, variable selection for linear regression in high dimensional feature space has played a pivotal role.
    Let $(x_1, y_1), \dots, (x_n, y_n)$ be data generated from a linear regression model
    \begin{equation}\label{lineareg}
        y_i = x_i  \beta^0 + \sigma \epsilon_i,
    \end{equation}
    where $x_i^\top = (x_{i1}, \dots, x_{ip})^{\top} \in \mathbb{R}^p$ is the $i$th predictor vector, $y_i \in \mathbb{R}$ is the corresponding response variable, and $\epsilon_i$'s are independent and identically distributed (i.i.d.) noises such that $\mathrm{E}(\epsilon_i) = 0$ and $\mathrm{E}(\epsilon_i^2) = 1$. The vector $\beta^0 \in \mathbb{R}^p$ is the unknown true regression coefficient, and $\sigma > 0$ is the unknown noise level. 
    We let $X := (x_1^\top, \dots, x_n^\top)^{\top} \in \mathbb{R}^{n \times p}$ be the matrix of features and $y := (y_1, \dots, y_n)^{\top} \in \mathbb{R}^n$ be the response vector. In high dimensional regression problems, the dimension of predictors $p$ is usually much larger than the sample size $n$. With such a large number of predictors, one often prefers to select a smaller subset that is significant and highly correlated to the response for the ease of interpreting the model, as well as to avoid the issue of overfitting.

    One of the most popular methods for variable selection is the Lasso penalized least-squares \citep{tibshirani1996regression}:
    \begin{equation} \label{classic_lasso_model} 
        \min_{\beta \in \mathbb{R}^p} \left\{ \frac{1}{n} \|y - X\beta\|^2_2 + \lambda \|\beta\|_1 \right\},
    \end{equation}
    where $y \in \mathbb{R}^n$ is a given vector, and $\lambda > 0$ is the tuning parameter, and $\|\cdot\|_2$ and $\|\cdot\|_1$ denote the $\ell_2$-norm and $\ell_1$-norm, respectively. This method performs simultaneous variable selection and parameter shrinkage in order to enhance the prediction accuracy and interpretability of the resulting statistical model by minimizing the residual sum of squared errors plus an $\ell_1$-norm penalty term. 
    Despite its popularity, the classic Lasso model has two notable drawbacks. First, while it promotes sparsity in the regression coefficients, it does not exploit possible structural patterns among the nonzero coefficients. Second, the tuning parameter $\lambda$ depends on the unknown noise level $\sigma$, making its selection challenging in practice. Indeed, the theoretically optimal tuning parameter is of order $\sigma\sqrt{(\log p)/n}$.
    In classical low-dimensional linear regression, the noise level $\sigma$ is often estimated from the residual sum of squares, namely,
   $\hat{\sigma}^2={\|y-X\hat{\beta}\|_2^2}/{(n-p)}$,
   where $\hat{\beta}$ denotes the ordinary least-squares estimator. However, this approach is no longer applicable in high-dimensional settings with $p>n$, making accurate estimation of $\sigma$ considerably more challenging.
   
    To address the first limitation, \citet{She2010Sparse} introduced a sparse regression with exact clustering that achieves both sparsity and exact coefficient clustering.   It should be noted that this method has been applied to microarray data analysis and can also be used as a preprocessing step to uncover latent grouping structures among predictors before applying methods such as the fused Lasso \citep{tibshirani2005fused} and the group Lasso \citep{yuan2006model}.    
    Moreover, various algorithms have been developed for solving the method of \citet{She2010Sparse} , ranging from early approaches based on constrained reformulations with auxiliary variables \citep{Petry2011Pairwise,tang2016fused} to the efficient semismooth Newton augmented Lagrangian method (SSNAL)  \citep{lin2019efficient}.

    To address the second limitation, \cite{belloni2011square} introduced the square-root Lasso penalized method
    \begin{equation}\label{square_root_lasso_model}
		\min_{\beta \in \mathbb{R}^p} \Big\{ \frac{1}{\sqrt{n}}\|y - X\beta\|_2 + \lambda \|\beta\|_1 \Big\}.
	\end{equation}
    In contrast to the classic Lasso model \eqref{classic_lasso_model}, the choice of $\lambda$ in model \eqref{square_root_lasso_model} is independent of the unknown noise level $\sigma$. 
    More generally, square-root Lasso-penalized method takes the following form
    \begin{equation}\label{square_root_loss_problem}
		\min_{\beta \in \mathbb{R}^p} \Big\{ \frac{1}{\sqrt{n}}\|y - X\beta\|_2 + \lambda p(\beta) \Big\},
	\end{equation}
    where $p(\cdot)$ is the penalty term. 
    A key theoretical result established by \cite{stucky2017sharp} shows that if $p(\cdot)$ is a weakly decomposable norm \citep[Definition 4.1]{van2014weakly}, then the optimal tuning parameter $\lambda$ in \eqref{square_root_loss_problem} is independent of the unknown noise level $\sigma$, and the resulting estimator enjoys sharp oracle properties. 
    In fact, \cite{stucky2017sharp} showed that this property holds for several estimators, including the square-root Lasso \citep{belloni2011square}, the square-root group Lasso \citep{belloni2011square,yuan2006model}, and the square-root SLOPE \citep{bogdan2015slope}.
    Nevertheless, it is worth noting that \cite{jiang2021simultaneous} for square-root fused Lasso and \cite{li2026graph} for square-root graph-based penalty separately established this property that tuning parameters can be selected without estimating the noise level, without requiring the weakly decomposable norm condition.

    To achieve simultaneous variable selection and coefficient clustering while retaining the scale-free property of the square-root loss, in this paper, we propose the following square-root clustered Lasso (SRCL) method:
    \begin{equation}\label{square_root_clustered_lasso_model}
		\min_{\beta \in \mathbb{R}^p} \left\{ \frac{1}{\sqrt{n}}\|y - X\beta\|_2 + \lambda_1\|\beta\|_1 + \lambda_2 \sum_{1 \le i < j \le p} |\beta_i - \beta_j| \right\},
	\end{equation}
	where $\lambda_1, \lambda_2 > 0$ are given tuning parameters. 
    This model can be viewed as a square-root extension of the sparse regression model with exact clustering of \citet{She2010Sparse}, where the quadratic loss is replaced by the square-root loss of \citet{belloni2011square}.  
    Specifically, compared to the square-root Lasso method \eqref{square_root_lasso_model}, model \eqref{square_root_clustered_lasso_model} incorporates an additional pairwise difference penalty term $\sum_{i < j } |\beta_i-\beta_j|$, which encourages coefficients with similar effects to be grouped together.   
    In what follows, we prove that the penalty $p(\beta)=\lambda_1\|\beta\|_1+\lambda_2\sum_{i<j}|\beta_i-\beta_j|$ is weakly decomposable with respect to an index set $S$ whenever $\lambda_1>\lambda_2|S|$. This property permits a penalty-adapted analysis of the prediction error.    
    We also show that the square-root loss function allows the tuning parameters $\lambda_1$ and $\lambda_2$ to be selected independently of the unknown noise level $\sigma$ in \eqref{lineareg}.
    Moreover, we establish finite-sample guarantees for prediction error to provide a theoretical characterization of the statistical performance of the proposed estimator.
    Despite its attractive statistical properties, the SRCL model is substantially more challenging to solve. The main difficulty arises from the pairwise difference penalty  $\sum_{i<j} |\beta_i-\beta_j|$,  which contains $O(p^2)$ coupled nonsmooth terms. Consequently, highly efficient algorithms are needed for solving SRCL especially in high-dimensional case.
    
    To address this challenge, we develop a semismooth Newton-based proximal augmented Lagrangian method (N-ALM). Compared with first-order algorithms, the proposed algorithm exploits second-order information and is therefore better suited for obtaining high-accuracy solutions efficiently. 
    Related semismooth Newton augmented Lagrangian methods have been developed for several regularized optimization models, including Lasso \citep{li2018highly}, fused Lasso \citep{li2018fused}, sparse group Lasso \citep{zhang2020efficient}, OSCAR \citep{luo2019solving}, clustered Lasso \citep{lin2019efficient}, and exclusive Lasso \citep{lin2024highly}.
    Specifically, for solving \eqref{square_root_clustered_lasso_model}, our algorithm efficiently solves the resulting subproblems via a semismooth Newton method that fully exploits second-order sparsity. 
    In addition, we establish global convergence and a local linear rate for the distance to the KKT solution set under the stated conditions, with a superlinear rate when the penalty parameter tends to infinity.
    Finally, numerical experiments on real-world datasets demonstrate that N-ALM efficiently computes accurate solutions of large-scale SRCL problems and requires substantially less time than d-ADMM on the tested instances.
 
	The remaining parts of this paper are organized as follows. In Section \ref{preli}, we summarize some essential definitions and foundational results for subsequent algorithm design and numerical implementations. 
	In Section \ref{sec3}, we establish weak decomposability of the clustered Lasso penalty under suitable parameter conditions and derive a noise-level-free tuning rule.
	In Section \ref{sec3-2}, we establish finite-sample theoretical guarantees for prediction accuracy. 
	In Section \ref{sec4}, we develop the N-ALM algorithm for solving the dual formulation of the square-root clustered Lasso problem, and establish
	its convergence results under error bound condition.
	Moreover, in Section \ref{numersec}, we conduct comprehensive numerical experiments to evaluate the computational efficiency of N-ALM for the SRCL problem \eqref{square_root_clustered_lasso_model}.
	Finally, we conclude this paper with some remarks in Section \ref{con}.
    
    \textbf{Notation.} 
Throughout the paper, we use \(\operatorname{diag}(X)\) to denote the vector consisting of the diagonal entries of the matrix \(X\), and \(\operatorname{Diag}(x)\) to denote the diagonal matrix whose diagonal is given by the vector \(x\); for matrix arguments, \(\operatorname{Diag}\) denotes the corresponding block diagonal matrix. We denote by \(I_d\) the \(d\times d\) identity matrix (and omit the subscript when its size is clear) and by \(\mathbf{O}_n\) the \(n\times n\) zero matrix. For symmetric matrices \(A\) and \(B\) of the same size, \(A\preceq B\) means that \(B-A\) is positive semidefinite, and \(A\succ0\) means that \(A\) is positive definite. The largest eigenvalue of \(A\) is denoted by \(\lambda_{\max}(A)\), and \(\|X\|_2\) denotes the matrix norm induced by the Euclidean norm. For \(M\succ0\), we write \(\langle v,v'\rangle_M:=\langle v,Mv'\rangle\), \(\|v\|_M:=\sqrt{\langle v,v\rangle_M}\), and \(\operatorname{dist}_M(v,S):=\inf_{s\in S}\|v-s\|_M\). For a given matrix \(C\), \(C^\dagger\) represents its Moore--Penrose pseudoinverse. As usual, \(f^*(\cdot)\) denotes the Fenchel conjugate of an arbitrary function \(f(\cdot)\); in particular, for a norm \(p(\cdot)\), \(p^\star(\cdot)\) stands for its dual norm.
	
	\section{Preliminaries}\label{preli}

	\subsection{Basic Concepts and Some Preliminary Results }
	In this section, we review some basic concepts and definitions needed for the subsequent algorithmic developments and theoretical analysis.

    Firstly, we recall some basic concepts related to semismooth Newton methods. Let $\Psi:\mathcal O\subseteq\mathbb{R}^{n}\rightarrow\mathbb{R}^{m}$ be a locally Lipschitz continuous mapping. The  Clarke generalized Jacobian of $\Psi$ at $\nu$ is
    defined as 
    $$
     \partial\Psi(\nu) := \operatorname{conv} \left\{ \lim_{k\rightarrow\infty} \nabla\Psi(\nu_k): \nu_k\rightarrow\nu,\  \nu_k\in\mathcal D_\Psi \right\},
    $$
    where $\mathcal D_\Psi$ denotes the set of points at which $\Psi$ is differentiable.
 
	\begin{definition}[Semismoothness \citep{Miff29,qi30}]\label{def1}
		Let $\Psi:\mathcal O\subseteq\mathbb{R}^{n}\rightarrow\mathbb{R}^{m}$ be a
		locally Lipschitz continuous mapping, and let $\partial\Psi(\nu)$ denote the
		Clarke generalized Jacobian of $\Psi$ at $\nu$. 
		\begin{itemize}
			\item[(i)]	$\Psi$ is said to be semismooth at $\nu$ if it is directionally
			differentiable at $\nu$ and
			$$
			\Psi(\nu+\Delta\nu)-\Psi(\nu)-V\Delta\nu=o(\|\Delta\nu\|),
			$$
			for any $\Delta\nu\rightarrow0$ and any	$V\in\partial\Psi(\nu+\Delta\nu)$.
			
			\item[(ii)]	$\Psi$ is said to be $\gamma$-order semismooth	($\gamma>0$) at $\nu$ if it is semismooth at $\nu$ and
			$$
			\Psi(\nu+\Delta\nu)-\Psi(\nu)-V\Delta\nu=O(\|\Delta\nu\|^{1+\gamma}),
			$$
			for any $\Delta\nu\rightarrow0$ and any 	$V\in\partial\Psi(\nu+\Delta\nu)$.
			In particular, when $\gamma=1$, $\Psi$ is called strongly semismooth.
		\end{itemize}
		Moreover, $\Psi$ is said to be semismooth on $\mathcal O$ if it is
		semismooth at every point of $\mathcal O$. The definitions of
		$\gamma$-order semismoothness and strong semismoothness on $\mathcal O$ are
		analogous.
	\end{definition}
	
	For a positive definite matrix $M$, let $\operatorname{dist}_M(v, S)$ be the distance of a point $v$ to a closed convex set $S$, that is, $\operatorname{dist}_M(v, S) := \inf_{v' \in S} \|v-v'\|_M$. In particular, when $M=I$, we simply write $\operatorname{dist}(x,S):=\operatorname{dist}_I(x,S)$.

\begin{definition}[Error Bound Condition]\label{def:error_bound}
	Let $\Psi:\mathcal O\rightrightarrows\mathbb{R}^{m}$ be a set-valued mapping and let $\nu^*\in\mathcal O$ satisfy
	$0\in\Psi(\nu^*)$. If there exist constants $\varepsilon>0$ and $\kappa>0$ such that
	\begin{equation}\label{EBC}
		\operatorname{dist}\big(\nu,\Psi^{-1}(0)\big)\leq\kappa\operatorname{dist}\big(0,\Psi(\nu)\big),
	\end{equation}
	for all $\nu\in\mathcal O$ satisfying $\|\nu-\nu^*\|_2\leq\varepsilon$,
	then $\Psi$ is said to satisfy the error bound condition at $\nu^*$ with modulus $\kappa$. This condition is equivalent to the metric
	subregularity of $\Psi$ at $\nu^*$ for the origin; see,
	e.g., \cite{kruger2015error}.
\end{definition}
 Following \cite[Definition~1(a)]{zhou2017unified}, \(\Psi^{-1}:\mathbb R^m\rightrightarrows\mathcal O\) is calm at \(0\) for \(\nu^*\) if there exist \(\kappa>0\) and \(r>0\) such that
\begin{equation}\label{eq:calmness-definition-psi}
	\operatorname{dist}\bigl(\nu,\Psi^{-1}(0)\bigr)
	\le\kappa\|v\|_2,
	\qquad \|v\|_2\le r,\quad v\in\Psi(\nu),\quad \|\nu-\nu^*\|_2\le r.
\end{equation}
By \cite[Theorem~3H.3]{dontchev2009implicit}, \(\Psi^{-1}\) is calm at \(0\) for \(\nu^*\) if and only if \(\Psi\) is metrically subregular at \(\nu^*\) for the origin; by Definition~\ref{def:error_bound}, the latter is equivalent to \(\Psi\) satisfying the error bound condition at \(\nu^*\).

     Secondly, we review some basic definitions and conclusions in optimization theory \citep{Rock35}. 
     Let $f: \mathbb{R}^n\rightarrow (-\infty,+\infty]$ be a proper closed convex function. The subdifferential of $f$ at $x$, denoted as $\partial f(x)$,  is the set of  vectors $z$ that  satisfy $f(y)\geq f(x)+\langle z,y-x \rangle$ for an arbitrary $y \in \mathbb{R}^n$. Such a vector $z$ is referred to as a subgradient of  $f$ at $x$. 
     The Fenchel conjugate of $f$ at $x^*$ is defined by $f^*(x^*):=\sup_{x\in\mathbb{R}^n}\{\langle x, x^*\rangle-f(x)\}$.
     The Moreau envelope function of $f$ with parameter $t >0$, denoted as $\Phi_{t f}(x)$, is defined as
     \begin{equation}\label{moreau}
     	\Phi_{t f}(x) :=\min _{y \in \mathbb{R}^{n}}\Big\{f(y)+\frac{1}{2 t}\|y-x\|_2^{2}\Big\} \quad \forall x \in \mathbb{R}^n.
     \end{equation}
     It is well known that the Moreau envelope $\Phi_{tf}(\cdot)$ is continuously differentiable on $\mathbb R^n$  \citep{rockafellar1998variational}.
     The proximal mapping of $f$ with $t>0$, denoted as $\operatorname{Prox}_{tf}(x)$, is defined by
     \begin{equation}\label{proxf}
     	\operatorname{Prox}_{tf}(x) :=\underset{y \in \mathbb{R}^{n}}{\operatorname{argmin}}\Big\{f(y)+\frac{1}{2t}\|y-x\|_2^{2}\Big\} \quad \forall x \in \mathbb{R}^n.
     \end{equation}
     The following Moreau's identity \cite[Theorem 24.5]{bauschke2017} is essential in the computation of proximal mapping and  subsequent analysis:
     \begin{equation}\label{moreauid}
     	\text{Prox}_{tf}(x)+t\text{Prox}_{f^{*}/t}(x/t)=x.
     \end{equation}
	The function $f(\cdot)$ is said to be positively homogeneous of degree one if $f(\lambda x) = \lambda f(x)$ for all $x\in\mathbb{R}^n$ and $\lambda>0$.
	Moreover, then $f(\cdot)$ is positively homogeneous of degree one if and only if its Fenchel conjugate $f^*(\cdot)$ reduces to the indicator function of the closed convex set
	$$
	\mathcal{C} := \big\{ x^* \in \mathbb{R}^n : \langle x, x^* \rangle \le f(x), \; \forall x \in \mathbb{R}^n \big\}.
	$$
	If $f(0) = 0$, then $\mathcal{C} = \partial f(0)$.	
	A notable example is $f(x) = \mu\|x\|_1$ with a scalar $\mu > 0$, then its conjugate function is $f^*(z) = \delta_{B_\infty^{(\mu)}}(z)$ where $B_\infty^{(\mu)} := \{ z \mid \|z\|_\infty \le \mu \}$. Using Moreau identity \eqref{moreauid}, the proximal mapping of $f(\cdot)$ is given by
	\begin{equation}\label{proxl1}
	\operatorname{Prox}_{\mu\|\cdot\|_1}(z) = z - \Pi_{B_{\infty}^{(\mu)}}(z) \quad \text{with} \quad
	\Pi_{B_{\infty}^{(\mu)}}(z)_i = \left\{\begin{array}{ll}
		z_i, & \text{ if } |z_i| \le \mu, \\[2mm]
		\text{sign}(z_i)\mu, & \text{ if } |z_i| > \mu.
	\end{array}\right.
	\end{equation}
	where $\text{sign}(\cdot)$ is the componentwise sign function.

	Another popular example is $f(x) = \mu\|x\|_2$ with a scalar $\mu > 0$, then its conjugate function is $f^*(z) = \delta_{B_2^{(\mu)}}(z)$ where $B_2^{(\mu)} := \{ z \mid \|z\|_2 \le \mu \}$. Using Moreau identity \eqref{moreauid}, the proximal mapping of $f(\cdot)$ is given by
	$$
	\operatorname{Prox}_{\mu\|\cdot\|_2}(z) = z - \Pi_{B_{2}^{(\mu)}}(z) \quad \text{with} \quad
	\Pi_{B_{2}^{(\mu)}}(z) = \left\{\begin{array}{ll}
		z, & \text{ if } \|z\|_2 \le \mu, \\[2mm]
		\frac{\mu}{\|z\|_2} z, & \text{ if } \|z\|_2 > \mu.
	\end{array}\right.
	$$
	By Lemma 1 of \cite{yuan2018efficient}, the proximal mapping	$\operatorname{Prox}_{\mu\|\cdot\|_2}(z)$ is strongly semismooth for any	$\mu>0$.
	For any given $\vartheta\in\mathbb{R}^n$, the Clarke generalized Jacobian \citep{Clarke1990,qi30} of the proximal mapping $\operatorname{Prox}_{\mu\|\cdot\|_2}(\cdot)$ at point $\vartheta$ is given by
	\begin{equation}\label{l2clarke}
	\partial \operatorname{Prox}_{\mu\|\cdot\|_{2}}(\vartheta) = \left\{\begin{array}{ll}
		\left( 1 - \frac{\mu}{\|\vartheta\|_2} \right) I_n + \mu \frac{\vartheta \vartheta^\top}{\|\vartheta\|_2^3}, & \text{if } \|\vartheta\|_2 > \mu, \\[3mm]
		\left\{ \kappa \frac{\vartheta \vartheta^\top}{\mu^2} \mid 0 \le \kappa \le 1 \right\}, & \text{if } \|\vartheta\|_2 = \mu, \\[3mm]
		\mathbf{O}_n, & \text{if } \|\vartheta\|_2 < \mu,
	\end{array}\right.
	\end{equation}
	where $I_n$ denotes the $n\times n$ identity matrix and $\mathbf{O}_n$ denotes the $n\times n$ zero matrix.
	For practical implementation, we select the following element from the Clarke generalized Jacobian $\partial \operatorname{Prox}_{\mu\|\cdot\|_{2}}(\vartheta)$:
	$$
	\hat{\Theta} = \left\{\begin{array}{ll}
		\left( 1 - \frac{\mu}{\|\vartheta\|_2} \right) I_n + \mu \frac{\vartheta \vartheta^\top}{\|\vartheta\|_2^3}, & \text{if } \|\vartheta\|_2 > \mu, \\[2mm]
		\mathbf{O}_n, & \text{if } \|\vartheta\|_2 \le \mu,
	\end{array}\right.
	$$	
	which is symmetric and positive definite when $\|\vartheta\|_2 > \mu$ and symmetric and positive semidefinite otherwise. 
	
	Finally, we briefly recall some basic concepts from convex analysis \cite{Rock35}, which will be used to characterize the dual norm of the clustered Lasso penalty in the prediction error bound analysis.
	For a nonempty closed convex set $\mathcal{C}\subseteq\mathbb{R}^{p}$, its polar set is defined as $\mathcal{C}^{\circ}=\{z\in\mathbb{R}^{p}\mid\langle z,x\rangle\leq1, \ \forall x\in\mathcal{C}\}$. The support function of $\mathcal{C}$ is defined as $\sigma_{\mathcal{C}}(x)=\sup_{y\in\mathcal{C}}\langle x,y\rangle$, and the indicator function of $\mathcal{C}$, denoted $\delta_{\mathcal{C}}(x)$, satisfies $\delta_{\mathcal{C}}(x)=0$ for $x\in\mathcal{C}$, and $+\infty$ otherwise.
	Clearly, these two functions are Fenchel conjugates of each other, i.e., $\sigma_{\mathcal{C}}(x)=\delta_{\mathcal{C}}^*(x)$. 
	The gauge function (or Minkowski functional) associated with $\mathcal{C}$ is defined as $\gamma_{\mathcal{C}}(x)=\inf\{t>0\mid x\in t\mathcal{C}\}$.
	Suppose $\mathcal{C}$ is a nonempty closed convex set containing the origin. It follows from \cite[Theorem~14.5]{Rock35} that the support function of $\mathcal{C}^{\circ}$ is the gauge function of $\mathcal{C}$, i.e., $\sigma_{\mathcal{C}^{\circ}}(\cdot)=\gamma_{\mathcal{C}}(\cdot)$.
	Moreover, if $\mathcal C$ is bounded and symmetric and contains the origin in its interior, its gauge function is a norm.
	For instance, the $\ell_1$-norm, $\ell_2$-norm, and $\ell_\infty$-norm unit balls, whose gauge functions are exactly $\|\cdot\|_1$, $\|\cdot\|_2$, and $\|\cdot\|_\infty$, respectively.
	In particular, if $\mathcal{C}$ is the unit ball of a norm $p(\cdot)$, then its support function coincides with the dual norm, i.e., $\sigma_{\mathcal{C}}(z)=p^\star(z)$, and the unit ball of the dual norm is the polar set $\mathcal{C}^{\circ}$, i.e., $\mathcal{C}^{\circ}=\{z\mid p^\star(z)\leq 1\}$.
	
   Suppose $z\in\partial p(\beta)$. By subdifferential definition, we have $p(x)\ge p(\beta)+\langle z,x-\beta\rangle$, $\forall x$. Since $p$ is a norm (positively homogeneous), taking $x=0$ and $x=2\beta$ yields $\langle z,\beta\rangle=p(\beta)$. Substitute it back, we get $\langle z,x\rangle \le p(x)$,  $\forall x$.
   By definition of dual norm $p^\star(z)=\sup_{p(x)\le1}\langle z,x\rangle$, we get $p^\star(z)\le 1$, that is to say
   \begin{equation}\label{ppstar}
    z\in\partial p(\beta)\Longrightarrow p^\star(z)\leq1 .
   \end{equation}
   When $\beta$ is nonzero, we actually have $p^\star(z)=1$. At $\beta=0$ alone, the subdifferential $\partial p(0)$ consists of all $z$ satisfying $p^\star(z)\le1$.
	This standard characterization of the subdifferential of a norm follows from 
	\cite[Theorem 23.5]{Rock35} or
	\cite[Example 8.10]{rockafellar1998variational}.
	
\subsection{Preliminary Results on the Proximal Mapping for Clustered Lasso Penalty}
   	In this section, we briefly review several useful results on the proximal mapping of the clustered Lasso penalty
   \begin{equation}\label{defcl}
   p(x)=\lambda_1\|x\|_1+\lambda_2\sum_{1\le i<j\le p}|x_i-x_j|,  \qquad x\in\mathbb R^p,
   \end{equation}
   and its computable generalized Jacobian established in \cite{lin2019efficient}. 
   Clearly, the clustered Lasso penalty defined in \eqref{defcl} is a norm on $\mathbb{R}^p$ provided that $\lambda_1>0$ and $\lambda_2\geq 0$.
   Let $\mathcal D:=\{x\in\mathbb R^p\mid Bx\ge0\}$, where $B\in\mathbb R^{(p-1)\times p}$ is the adjacent-difference matrix satisfying
   $Bx=(x_1-x_2,\ldots,x_{p-1}-x_p)^\top$.
   For any $x\in\mathbb R^p$, let $x^\downarrow=P_xx$ be sorted in nonincreasing order, and define $w_i=p-2i+1$ for $i=1,\ldots,p$.
   It follows from \cite{lin2019efficient} that
   \[
   \mathcal S_{\lambda_2}(x):=P_x^\top\Pi_{\mathcal D}(P_xx-\lambda_2w),
   \qquad
   \operatorname{Prox}_p(x)=\operatorname{Prox}_{\lambda_1\|\cdot\|_1}(\mathcal S_{\lambda_2}(x)).
   \]
   The projection $\Pi_{\mathcal D}$ is an isotonic regression problem and can be computed by the pool-adjacent-violators (PAV) algorithm \citep{best1990active}.

   We next define the generalized derivative mapping used to compute Newton matrices. For $v\in\mathbb R^p$, set
   \[
   \mathcal I(v):=\{i\in\{1,\ldots,p-1\}:(B\Pi_{\mathcal D}(v))_i=0\}.
   \]
   Let $\ell(v)\in\mathbb R^{p-1}$ be the unique projection multiplier satisfying
   \[
   \Pi_{\mathcal D}(v)-v+B^\top\ell(v)=0,\quad
   \ell(v)\le0,\quad \ell_i(v)=0\quad(i\notin\mathcal I(v)).
   \]
   Define $\mathcal K(v):=\{K:\operatorname{supp}(\ell(v))\subseteq K\subseteq\mathcal I(v)\}$, and let
   \[
   \mathcal Q(x):=\left\{P_x^\top\left[I_p-B_K^\top(B_KB_K^\top)^{-1}B_K\right]P_x:
   K\in\mathcal K(P_xx-\lambda_2w)\right\}.
   \]
   Here $B_K$ contains the rows indexed by $K$; when $K=\varnothing$, the bracketed matrix is $I_p$.
   Following \cite[Eq.~(2.4) and Proposition~2.10]{lin2019efficient}, define the full multifunction
   \begin{equation}\label{M}
   \mathcal M(x):=\left\{\Theta Q:
   \Theta\in\partial_B\operatorname{Prox}_{\lambda_1\|\cdot\|_1}(\mathcal S_{\lambda_2}(x)),\ Q\in\mathcal Q(x)\right\},
   \end{equation}
   where $\partial_B$ denotes the $B$-subdifferential. This mapping is nonempty, compact, and upper semicontinuous. For every $M\in\mathcal M(x)$, both $M$ and $I_p-M$ are symmetric positive semidefinite. Moreover, there exists a neighborhood $\mathcal X$ of $x$ such that
   \begin{equation}\label{prox_semismooth}
   \operatorname{Prox}_p(u)-\operatorname{Prox}_p(x)-M(u-x)=0,
   \qquad u\in\mathcal X,\quad M\in\mathcal M(u).
   \end{equation}
   Thus $\operatorname{Prox}_p$ is strongly semismooth with respect to $\mathcal M$; see \cite[Remark~2.12]{lin2019efficient}. These properties concern the full multifunction, rather than a particular single-valued selection, and $\mathcal M$ is not identified with the Clarke generalized Jacobian.

   A computable element of $\mathcal M(x)$ is obtained by taking $\mathcal I:=\mathcal I(P_xx-\lambda_2w)$ and
   \[
   \Sigma=\operatorname{Diag}(a)\in\mathbb R^{(p-1)\times(p-1)},\quad a_i=\mathbf 1_{\{i\in\mathcal I\}},
   \qquad
   \Theta=\operatorname{Diag}(\theta),\quad
   \theta_i=\mathbf 1_{\{|(\mathcal S_{\lambda_2}(x))_i|>\lambda_1\}}.
   \]
   Specifically, $M=\Theta P_x^\top\Gamma P_x\in\mathcal M(x)$, where
   \begin{equation}\label{Gamma}
   \Gamma=I_p-B^\top(\Sigma BB^\top\Sigma)^\dagger B.
   \end{equation}
   The matrix $B$ used here is distinct from the full pairwise-difference matrix $D$ introduced in Lemma~\ref{lem:dual_norm}.

   To compute $\Gamma$ efficiently, we use the structural decomposition established in \cite[Proposition~2.8]{lin2019efficient}.
   More precisely, suppose that
   $ \Sigma=\operatorname{Diag}(\Lambda_1,\ldots,\Lambda_N)$,
   where each block $\Lambda_i$ is either $\mathbf O_{p_i}$ or $I_{p_i}$, and any two consecutive blocks are of different types. Let
   $ J:=\{\,i\mid\Lambda_i=I_{p_i}\,\}$. 
   Then
   \begin{equation}\label{gammat}
   	\Gamma  	= 	H+U_JU_J^\top,
   \end{equation}
   where $H$ is a block diagonal matrix and $U_J$ consists of the columns of a sparse matrix $U$ indexed by $J$. The explicit constructions of $H$ and $U$ can be found in \citep[Proposition~2.8]{lin2019efficient}.

    \section{Tuning Parameter Selection Analysis}\label{sec3}
   \subsection{Weak Decomposability of the Clustered Lasso Penalty}
    In this subsection, we show that the clustered Lasso penalty defined in \eqref{defcl} is weakly decomposable with respect to $S$ when $\lambda_1>\lambda_2|S|$. We begin by recalling the definition of a weakly decomposable norm of \cite{van2014weakly}.
    
    \begin{definition}[Weakly Decomposable Norm \citep{van2014weakly}]\label{def:weak_decompo-norm}
    	A norm $\mathcal{P}$ on $\mathbb{R}^p$ is said to be weakly decomposable with respect to an index set $S\subseteq\{1,\ldots,p\}$ if there exists a norm $\mathcal{P}^{c}$ on $\mathbb{R}^{|S^c|}$ such that
    	$$
    	\mathcal{P}(\beta)\geq \mathcal{P}(\beta_S) + \mathcal{P}^{c}(\beta_{S^c}), \qquad	\forall\,\beta\in\mathbb{R}^p.
    	$$
    	The set $S$ is called an allowed set if $\mathcal{P}$ is weakly decomposable with respect to $S$, where $\beta_S$ denotes the vector obtained from $\beta$ by setting all components outside $S$ to zero.
    \end{definition}
    
	\begin{theorem}\label{wdtheorem}
		The clustered Lasso penalty defined in \eqref{defcl}, i.e.,
		$$
		p(\beta)=\lambda_1\|\beta\|_1+\lambda_2\sum_{i<j}|\beta_i-\beta_j|, \qquad	\forall\,\beta\in\mathbb{ R}^p,
		$$
		is a weakly decomposable norm with respect to an index set $S\subseteq\{1,2,\ldots,p\}$ with $s=|S|$ provided that $\lambda_1>\lambda_2s$.
	\end{theorem}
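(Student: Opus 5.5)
The plan is to write down an explicit candidate for the complementary norm $\mathcal P^c$ and verify the inequality of Definition~\ref{def:weak_decompo-norm} by splitting the pairwise differences according to whether the indices lie in $S$ or $S^c$. Fix $\beta\in\mathbb R^p$ and set $s=|S|$. Decompose
\[
\sum_{i<j}|\beta_i-\beta_j| = \sum_{i<j,\, i,j\in S}|\beta_i-\beta_j| + \sum_{i<j,\, i,j\in S^c}|\beta_i-\beta_j| + \sum_{i\in S,\, j\in S^c}|\beta_i-\beta_j| .
\]
Also note that $p(\beta_S)$ is computed from $\beta_S$, which has zeros on $S^c$. Hence
\[
p(\beta_S)=\lambda_1\|\beta_S\|_1+\lambda_2\sum_{i<j,\, i,j\in S}|\beta_i-\beta_j|+\lambda_2 (p-s)\|\beta_S\|_1 .
\]
This is the key bookkeeping point. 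The cross terms in $p(\beta_S)$ contribute $|\beta_i|$ for every pair $i\in S$, $j\in S^c$, so each $i\in S$ appears $p-s$ times.

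For the cross terms of $p(\beta)$ itself, use the reverse triangle inequality $|\beta_i-\beta_j|\ge|\beta_i|-|\beta_j|$. Summing over $i\in S$, $j\in S^c$ gives
\[
\sum_{i\in S,\, j\in S^c}|\beta_i-\beta_j|\ge (p-s)\|\beta_S\|_1 - s\|\beta_{S^c}\|_1 .
\]
Substituting into $p(\beta)$ and subtracting $p(\beta_S)$, the $S$-internal differences cancel exactly, and so do the terms $\lambda_1\|\beta_S\|_1$ and $\lambda_2(p-s)\|\beta_S\|_1$. What remains is
\[
p(\beta)-p(\beta_S)\ \ge\ (\lambda_1-\lambda_2 s)\|\beta_{S^c}\|_1+\lambda_2\sum_{i<j,\, i,j\in S^c}|\beta_i-\beta_j| .
\]

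This suggests defining, for $z\in\mathbb R^{|S^c|}$,
\[
\mathcal P^c(z):=(\lambda_1-\lambda_2 s)\|z\|_1+\lambda_2\sum_{i<j}|z_i-z_j| ,
\]
which is a clustered Lasso penalty on $\mathbb R^{|S^c|}$ with weights $\lambda_1-\lambda_2 s$ and $\lambda_2$. By the remark after \eqref{defcl}, it is a norm exactly when $\lambda_1-\lambda_2 s>0$, which is the hypothesis. It is worth confirming directly that it is a norm: it is positively homogeneous and satisfies the triangle inequality termwise, and it is definite because the $\ell_1$ part has positive weight. This is the only place where $\lambda_1>\lambda_2 s$ is used, and it explains the threshold.

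The main obstacle I expect is the accounting of the cross terms. One must not forget that $p(\beta_S)$ already contains the $\lambda_2(p-s)\|\beta_S\|_1$ contribution from pairs straddling $S$ and $S^c$. One must also check that the reverse triangle inequality, applied with the right orientation, loses exactly $s\|\beta_{S^c}\|_1$ and nothing from $\beta_S$. The degenerate cases $S=\varnothing$ and $S=\{1,\dots,p\}$ should be noted separately: in the first the inequality is trivial, and in the second $S^c$ is empty so $\mathcal P^c$ is vacuous.
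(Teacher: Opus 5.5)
Your proposal is correct and matches the paper's proof essentially step for step: the same expansion of $p(\beta_S)$ including the $\lambda_2(p-s)\|\beta_S\|_1$ contribution from straddling pairs, the same reverse-triangle bound $\sum_{i\in S,\,j\in S^c}|\beta_i-\beta_j|\ge(p-s)\|\beta_S\|_1-s\|\beta_{S^c}\|_1$, and the same complementary norm $(\lambda_1-\lambda_2 s)\|z\|_1+\lambda_2\sum_{i<j}|z_i-z_j|$ on $\mathbb R^{|S^c|}$. As you say, the hypothesis $\lambda_1>\lambda_2 s$ is used only for the definiteness of that norm, and your note on the degenerate cases $S=\varnothing$ and $S=\{1,\ldots,p\}$ is a small addition the paper omits.
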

	
	 \begin{proof}
	 	Decompose $p(\beta)=p_1(\beta)+p_2(\beta)$, where $ p_1(\beta) := \lambda_1 \|\beta\|_1 $ and $p_2(\beta) := \lambda_2 \sum_{i < j} |\beta_i - \beta_j| $. 
	 	Firstly,  for an index set $S \subset \{1, \ldots, p\}$, by the definition of the $\ell_1$-norm, we get that $p_1(\beta)$ can be expressed as
		$$
		p_1(\beta) = \lambda_1 \|\beta\|_1 =   \lambda_1 \left( \sum\limits_{i\in S} |\beta_i| + \sum\limits_{i\in S^c} |\beta_i| \right) = \lambda_1 \|\beta_{S}\|_1 + \lambda_1 \|\beta_{S^c}\|_1,
		$$
		where  $\beta_S=\big(\beta_i\mathbf{1}_{\{i\in S\}}\big)_{i=1}^p\in\mathbb{R}^p$, i.e., $\beta_S$ retains the components of $\beta$ indexed by $S$ and sets all other components to zero, and $\beta_{S^c}\in\mathbb{R}^{|S^c|}$ denote the subvector of $\beta$ corresponding to the index set $S^c$.
		Let $p_1(\beta_{S}) = \lambda_1 \|\beta_{S}\|_1 $ and $p_1^{c}(\beta_{S^c}) = \lambda_1 \|\beta_{S^c}\|_1 $. We can derive that
		\begin{equation}\label{pb1}
		p_1(\beta) = p_1(\beta_{S}) + p_1^{c}(\beta_{{S}^c}).
		\end{equation}
		Secondly, for the given index set $S$, we let
		$$
			p_2(\beta_S):=\lambda_2\sum_{i<j}|(\beta_S)_i-(\beta_S)_j|=\lambda_2\sum_{i,j\in S,i<j}|\beta_i-\beta_j|+\lambda_2(p-s)\|\beta_S\|_1,
		$$
		and
		$$
		p_2^{c}(\beta_{S^c}):=\lambda_2\sum_{i,j\in S^c,i<j}|\beta_i-\beta_j|.
		$$
		We know that, for any $i\in S$ and $j\in S^c$, it holds that $|\beta_i-\beta_j|\geq |\beta_i|-|\beta_j|$.
		Therefore, the cross terms satisfy
		$$
			\sum_{i\in S}\sum_{j\in S^c}|\beta_i-\beta_j|
			\geq\sum_{i\in S}\sum_{j\in S^c}\left(|\beta_i|-|\beta_j|\right)=(p-s)\|\beta_S\|_1-s\|\beta_{S^c}\|_1 .
		$$
		Consequently, it holds that
		$$
		\begin{aligned}
			p_2(\beta)
			=&
			\lambda_2\sum_{i,j\in S,i<j}|\beta_i-\beta_j|+\lambda_2\sum_{i,j\in S^c,i<j}
			|\beta_i-\beta_j|+\lambda_2\sum_{i\in S}\sum_{j\in S^c}	|\beta_i-\beta_j|\\[2mm]
			\geq&
			\lambda_2\sum_{i,j\in S,i<j}|\beta_i-\beta_j|+\lambda_2\sum_{i,j\in S^c,i<j}|\beta_i-\beta_j|+\lambda_2(p-s)\|\beta_S\|_1
			-\lambda_2s\|\beta_{S^c}\|_1.
		\end{aligned}
		$$
		Recall the definitions of $p_2(\beta_S)$ and $p_2^{c}(\beta_{S^c})$, then this inequality implies that
		\begin{equation}\label{pb2}
		p_2(\beta)	\geq p_2(\beta_S)+p_2^{c}(\beta_{S^c})	-\lambda_2s\|\beta_{S^c}\|_1.
		\end{equation}
		Combining the inequalities \eqref{pb1} and \eqref{pb2}, we get
		$$
		\begin{aligned}
			p(\beta)
			=p_1(\beta)+p_2(\beta)
			\geq	\lambda_1\|\beta_S\|_1	+p_2(\beta_S)+(\lambda_1-\lambda_2s)\|\beta_{S^c}\|_1+p_2^{c}(\beta_{S^c})
			=p(\beta_S)+p^{c}(\beta_{S^c}),
		\end{aligned}
		$$
		where
		\begin{equation}\label{defpcc}
		p(\beta_S):=p_1(\beta_S)+p_2(\beta_S)\quad\text{and}\quad
		p^{c}(\beta_{S^c}):=(\lambda_1-\lambda_2s)\|\beta_{S^c}\|_1+	\lambda_2	\sum_{i,j\in S^c,i<j}|\beta_i-\beta_j|.
		\end{equation}
		
		It remains to verify that $p^{c}$ is a norm on	$\mathbb R^{|S^c|}$. Suppose that $\lambda_1>\lambda_2s$.
		Then the coefficient of the $\ell_1$-term in $p^{c}$ is strictly	positive.
		For any $\beta_{S^c}\neq0$, we have $p^{c}(\beta_{S^c})>0$,	and hence the positive definiteness holds. 
		Moreover, for any $t\in\mathbb R$, it holds that
		$$
			p^{c}(t\beta_{S^c})=(\lambda_1-\lambda_2s)\|t\beta_{S^c}\|_1	+\lambda_2\sum_{i,j\in S^c,i<j}	|t\beta_i-t\beta_j|	=|t|p^{c}(\beta_{S^c}),
		$$
		which proves the positive homogeneity.		Finally, for any $\beta_{S^c},\gamma_{S^c}\in\mathbb R^{|S^c|}$,
		the triangle inequality of the $\ell_1$-norm and the absolute value function yield
		$$
		\begin{aligned}
			p^{c}(\beta_{S^c}+\gamma_{S^c})
			\leq&(\lambda_1-\lambda_2s)	\|\beta_{S^c}\|_1	+\lambda_2\sum_{i,j\in S^c,i<j}	|\beta_i-\beta_j|+(\lambda_1-\lambda_2s)
			\|\gamma_{S^c}\|_1	+\lambda_2\sum_{i,j\in S^c,i<j}|\gamma_i-\gamma_j|\\
			=&p^{c}(\beta_{S^c})+p^{c}(\gamma_{S^c}).
		\end{aligned}
		$$
		Therefore, $p^{c}$ is a norm on $\mathbb R^{|S^c|}$.
		Hence, under the condition $\lambda_1>\lambda_2s$, the clustered Lasso	penalty is a weakly decomposable norm with respect to the index set $S$.
		\qed
\end{proof}

\subsection{Noise-Level-Free Choice of the Tuning Parameter}
One important advantage of the proposed SRCL method \eqref{square_root_clustered_lasso_model} is that its tuning parameters $\lambda_1$ and $\lambda_2$ can be selected without knowledge of the unknown noise level $\sigma$ appearing in \eqref{lineareg}. This pivotal property is consistent with the square-root regularization framework with weakly decomposable norm penalties studied in \cite[Theorem 9]{stucky2017sharp}, where sharp oracle inequalities are established under tuning parameters determined by suitably normalized noise quantities.
In fact, this pivotal property can also be found in several square-root-based estimators, including the square-root Lasso \citep{belloni2011square}, the square-root group Lasso \citep{belloni2011square,yuan2006model}, and the square-root SLOPE \citep{bogdan2015slope}.

However, the existing estimators typically involve only a single tuning parameter. In contrast, the clustered Lasso penalty considered in \eqref{square_root_clustered_lasso_model} involves two tuning parameters $\lambda_1$ and $\lambda_2$, corresponding to the sparsity-inducing and clustering components, respectively. 
In what follows, we establish the pivotal choice of these two tuning parameters, and show that they can be theoretically specified without knowledge of the unknown noise level $\sigma$.

\begin{assumption}\label{assumption_noise}
	The noise vector $\epsilon=(\epsilon_1,\ldots,\epsilon_n)^{\top}$ has independent standard Gaussian components and is independent of the random design matrix $X$. 
\end{assumption}
\begin{assumption}\label{assumption_design}
	The columns of the design matrix $X=(X_1,\ldots,X_p)\in\mathbb{R}^{n\times p}$ are normalized such that
	$$
		\|X_j\|_2=\sqrt{n},\qquad j=1,\ldots,p.
	$$
\end{assumption}

For convenience, we define
\begin{equation}\label{defpr}
	p_{\rho}(\beta)	:=\|\beta\|_1+\rho\sum_{1\leq i<j\leq p}|\beta_i-\beta_j|,
\end{equation}
where $\rho>0$ is a fixed scalar.  Then, from \eqref{square_root_clustered_lasso_model}, the proposed SRCL estimator $\widehat{\beta}$ can be written in the unified square-root penalized form
\begin{equation}\label{SRCL_unified}
	\widehat{\beta}\in\arg\min_{\beta\in\mathbb{R}^p}	\Big\{\frac{1}{\sqrt{n}}\|y-X\beta\|_2+\lambda p_{\rho}(\beta)\Big\}
\end{equation}
provided that $\lambda_1=\lambda$ and $\lambda_2=\rho\lambda$.
Clearly, $p_\rho(\beta)=p(\beta)/\lambda$ is a norm. By Theorem~\ref{wdtheorem}, it is weakly decomposable with respect to an index set $S$ whenever $\rho|S|<1$.

\begin{lemma}\label{lem:dual_norm}
	Let $D\in\mathbb{R}^{p(p-1)/2\times p}$ be a pairwise difference matrix such that
	$$
		\sum_{1\leq i<j\leq p}|\beta_i-\beta_j|	=\|D\beta\|_1,
    $$
	and hence $	p_{\rho}(\beta)=\|\beta\|_1+\rho\|D\beta\|_1$. 	The dual norm of $p_{\rho}(\cdot)$ is given by
	\begin{equation}\label{dual_norm_explicit}
		p_{\rho}^\star(u)=\min_{z\in\mathbb{R}^{p(p-1)/2}}\left\{\max\Big\{\|u-D^{\top}z\|_{\infty},\frac{\|z\|_{\infty}}{\rho}\Big\}\right\},	\qquad	u\in\mathbb{R}^p.
	\end{equation}
\end{lemma}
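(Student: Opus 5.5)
The plan is to write $p_\rho$ as a composition of a norm with a linear map and use support-function calculus from the preliminaries. Set $A:=\begin{pmatrix} I_p\\ D\end{pmatrix}$ and define on $\mathbb{R}^p\times\mathbb{R}^{p(p-1)/2}$ the norm $N(a,b):=\|a\|_1+\rho\|b\|_1$. Then $p_\rho(\beta)=N(A\beta)$. Since $A$ has full column rank, $p_\rho$ is a norm, and its unit ball is $\mathcal C=\{\beta: N(A\beta)\le 1\}$. By the preliminaries, $p_\rho^\star=\sigma_{\mathcal C}$.

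The first step is to compute the dual norm of $N$ directly: $N^\star(v,z)=\max\{\|v\|_\infty,\|z\|_\infty/\rho\}$. This is the standard duality for a weighted sum of $\ell_1$ norms. It follows by bounding $\langle v,a\rangle+\langle z,b\rangle\le \|v\|_\infty\|a\|_1+(\|z\|_\infty/\rho)\,\rho\|b\|_1$, and the bound is attained at a suitable coordinate vector.

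The main step is the conjugate/dual formula for a composition with a linear map. Because $p_\rho=N\circ A$ is positively homogeneous, $p_\rho^*=\delta_{\{u:\,p_\rho^\star(u)\le1\}}$. On the other hand, $(N\circ A)^*(u)=\min\{N^*(w): A^\top w=u\}$, with the minimum attained. This follows from Rockafellar's Theorem 16.3, which applies since $N$ is finite everywhere, so the qualification $\operatorname{ri}(\operatorname{dom}N)\cap\operatorname{range}A\neq\emptyset$ holds trivially. Since $N^*=\delta_{\{N^\star\le1\}}$, the dual unit ball of $p_\rho$ is
\[
\{u:\ \exists\, w=(v,z),\ v+D^\top z=u,\ N^\star(v,z)\le 1\},
\]
that is, $A^\top$ applied to the dual ball of $N$. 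Taking gauges and using homogeneity gives
\[
p_\rho^\star(u)=\min\{N^\star(v,z): v+D^\top z=u\}.
\]
Eliminating $v=u-D^\top z$ yields exactly \eqref{dual_norm_explicit}.

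I expect the main obstacle to be justifying that the infimum is attained and that no closure is needed. The image of the compact dual ball of $N$ under a linear map is compact, and hence closed. Alternatively, the objective $z\mapsto\max\{\|u-D^\top z\|_\infty,\|z\|_\infty/\rho\}$ is coercive in $z$ and continuous, so the minimum exists.

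As an independent check, I would verify the two inequalities directly. For any $z$,
\[
\langle u,\beta\rangle=\langle u-D^\top z,\beta\rangle+\langle z,D\beta\rangle\le \max\{\cdot,\cdot\}\,p_\rho(\beta),
\]
which gives ``$\le$''. The reverse inequality then follows from the duality argument above.
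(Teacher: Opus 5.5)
Your proof is correct and is essentially the paper's argument. Both identify the dual unit ball of $p_\rho$ with $\mathcal{C}_\rho=\{a+D^\top z:\|a\|_\infty\le 1,\ \|z\|_\infty\le\rho\}$ (for you, $A^\top$ applied to the dual ball of $N$), take its gauge, and eliminate $a=u-D^\top z$, with attainment by coercivity. The only difference is the tool for that identification: you use the conjugate-of-composition rule (Rockafellar's Theorem~16.3), while the paper writes $p_\rho=\sigma_{\mathcal{C}_\rho}$ and uses that the support function of the polar is the gauge (Theorem~14.5); your remark that $\mathcal{C}_\rho$ is compact, hence closed, also supplies the closedness that Theorem~14.5 tacitly requires.
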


\begin{proof}
	From the definition of dual norm, it gets that
	$$
		\|\beta\|_1	=\max_{\|a\|_\infty\leq 1}a^\top\beta,\quad\text{and}\quad \rho\|D\beta\|_1	=	\max_{\|z\|_\infty\leq\rho}z^\top D\beta.
	$$
	Therefore, from the mathematical expression of $p_\rho(\beta)$, it yields that 
	$$
		p_{\rho}(\beta)=\max_{\|a\|_\infty\leq 1}a^\top\beta+\max_{\|z\|_\infty\leq\rho}z^\top D\beta
		=\max_{\substack{\|a\|_\infty\leq 1\\ \|z\|_\infty\leq\rho}}\Big\{	(a+D^\top z)^\top\beta\Big\}.
	$$
	For convenience, we define $\mathcal{C}_{\rho}:=\left\{	a+D^\top z\mid \|a\|_\infty\leq1,\,\|z\|_\infty\leq\rho\right\}$.
	Hence, $p_{\rho}$ can be represented as the support function of	$\mathcal{C}_{\rho}$, namely,
	$$
		p_{\rho}(\beta)	=\sup_{u\in\mathcal{C}_{\rho}}u^\top\beta=:\sigma_{\mathcal{C}_\rho}(\beta).
	$$
	According to the definition of the polar set reviewed in Section \ref{preli}, we have
    $$
		\mathcal{C}_{\rho}^{\circ}=\left\{\beta \mid u^{\top}\beta\leq1,\ \forall u\in\mathcal{C}_{\rho}\right\}  
		=\Big\{\beta\mid \sup_{u\in\mathcal{C}_{\rho}}u^{\top}\beta\leq1\Big\}= \Big\{\beta\mid p_{\rho}(\beta)\leq1 \Big\}.
	$$
    Hence, by the definition of the dual norm, we have
    \begin{equation}\label{dunormp}
    	p_{\rho}^{\star}(u)  =   \sup_{p_{\rho}(\beta)\leq1}u^\top\beta 	=  	\sigma_{\mathcal{C}_{\rho}^{\circ}}(u).
    \end{equation}
    By Theorem 14.5 in \cite{Rock35},   the support function of $\mathcal{C}_{\rho}^{\circ}$ is the gauge
    function of $\mathcal{C}_{\rho}$. Therefore,
    $$
    	p_{\rho}^{\star}(u)=\sigma_{\mathcal{C}_{\rho}^{\circ}}(u)
    	=\gamma_{\mathcal{C}_{\rho}}(u)	= 	\inf\Big\{	t>0\mid u\in t\mathcal{C}_{\rho}   	\Big\}.
    $$
    Moreover, for $t>0$, the condition $u\in t\mathcal{C}_{\rho}$ is equivalent to
	the existence of $a$ and $z$ such that	$u:=a+D^\top z$ with $\|a\|_\infty\leq t$ and $\|z\|_\infty\leq\rho t$. 
	Eliminating $a$ by setting	$a=u-D^\top z$,
	we obtain
	$$
		p_{\rho}^\star(u)=\inf_{z\in\mathbb{R}^{p(p-1)/2}}\max\left\{\|u-D^\top z\|_\infty,	\frac{\|z\|_\infty}{\rho}\right\}.
	$$
	Since the objective function is continuous and coercive with respect to
	$z$, the infimum is attained. Therefore, the result \eqref{dual_norm_explicit} holds. This completes the proof.\qed
\end{proof}

From the optimality condition of problem \eqref{SRCL_unified}, we know that any optimal solution $\widehat{\beta}$ satisfies
$$
0\in -\frac{X^{\top}(y-X\widehat{\beta})}{\sqrt n\|y-X\widehat{\beta}\|_2}+\lambda\partial p_\rho(\widehat{\beta}),
$$
provided that $y-X\widehat{\beta}\neq0$. Hence, there exists $z\in\partial p_\rho(\widehat{\beta})$ such that
$$
\frac{X^{\top}(y-X\widehat{\beta})}{\sqrt n\|y-X\widehat{\beta}\|_2}=\lambda z.
$$
Since the subgradient of a norm satisfies the relation \eqref{ppstar}, i.e., $z\in\partial p_\rho(\beta)\Longrightarrow p_\rho^*(z)\leq1$,
we obtain the following dual feasibility condition
\begin{equation}\label{dual_feasibility}
	p_\rho^\star\left(\frac{X^{\top}(y-X\widehat{\beta})}{\sqrt n\|y-X\widehat{\beta}\|_2}\right)
	\leq\lambda .
\end{equation}
However, the residual vector $y-X\widehat{\beta}$ depends on the unknown estimator
$\widehat{\beta}$ and therefore cannot be used directly for choosing the tuning
parameter. Following the pivotal tuning principle of square-root regularization
\cite{belloni2011square,stucky2017sharp}, we instead consider the corresponding
quantity involving the true noise vector.

\begin{theorem}\label{thm:pivotal_tuning}
	Suppose that Assumptions \ref{assumption_noise}--\ref{assumption_design} hold.
	Let $p_\rho(\cdot)$ be the norm defined in \eqref{defpr},
	and let $p_\rho^\star(\cdot)$ denote its dual norm.
	Define
	\begin{equation}\label{pivotal_quantity}
		\Lambda	:=p_\rho^\star\left(\frac{X^\top\epsilon}{\sqrt n\|\epsilon\|_2}\right).
	\end{equation}
	Then the conditional distribution of $\Lambda$ given $X$ does not depend on the unknown noise level
	$\sigma$.	Moreover, let $\alpha\in(0,1)$, and $q_{1-\alpha}=q_{1-\alpha}(X)$ be the conditional $(1-\alpha)$-quantile of $\Lambda$ given $X$.
	If	
	\begin{equation}\label{lambda_condition}
		\lambda\geq c q_{1-\alpha},
	\end{equation}
	for some constant $c>1$, then
	\begin{equation}\label{regularization_event}
		\mathbb{P}\left(\Lambda\leq\frac{\lambda}{c}\right)\geq1-\alpha.
	\end{equation}
	Consequently, the tuning parameter $\lambda$ can be selected independently of
	the unknown noise level $\sigma$.
	In particular, for a fixed $\rho$, the penalty parameters
	$\lambda_1=\lambda$ and $\lambda_2=\rho\lambda$ are also independent of $\sigma$.
\end{theorem}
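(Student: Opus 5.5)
The plan is to observe first that $\Lambda$ in \eqref{pivotal_quantity} is defined through the standardized noise $\epsilon$ alone, not through $\sigma\epsilon$. The key point is that under model \eqref{lineareg} the residual at the true parameter is $y-X\beta^0=\sigma\epsilon$. Since $p_\rho^\star$ is a norm, hence positively homogeneous, and $\sigma>0$,
$$
p_\rho^\star\left(\frac{X^\top(y-X\beta^0)}{\sqrt n\|y-X\beta^0\|_2}\right)=p_\rho^\star\left(\frac{\sigma X^\top\epsilon}{\sqrt n\,\sigma\|\epsilon\|_2}\right)=\Lambda .
$$
So the quantity that naturally appears in the dual feasibility condition \eqref{dual_feasibility}, evaluated at $\beta^0$, is scale-free: $\sigma$ cancels between numerator and denominator. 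This cancellation is the only place where the square-root loss matters, and it is the heart of the argument. Note that $\|\epsilon\|_2>0$ almost surely under Assumption~\ref{assumption_noise}, so $\Lambda$ is well defined almost surely.

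Next I will argue that the conditional law of $\Lambda$ given $X$ is free of $\sigma$.
\begin{enumerate}
\item Fix $X$. By Assumption~\ref{assumption_noise}, $\epsilon$ is independent of $X$ with law $N(0,I_n)$, so the conditional law of $\epsilon$ given $X$ is $N(0,I_n)$.
\item $\Lambda=g_X(\epsilon)$, where $g_X(e):=p_\rho^\star\bigl(X^\top e/(\sqrt n\|e\|_2)\bigr)$ is a Borel function on $\{e\neq 0\}$. It is in fact continuous there, since $p_\rho^\star$ is a norm by Lemma~\ref{lem:dual_norm}.
\item The function $g_X$ depends only on $X$, $n$ and $\rho$. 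Hence the conditional law of $\Lambda$ given $X$ is the pushforward of $N(0,I_n)$ under $g_X$, and it involves neither $\sigma$ nor $\beta^0$.
\end{enumerate}
Assumption~\ref{assumption_design} is not needed for pivotality. It only fixes the scale of $X^\top\epsilon/\sqrt n$, which could be used later to bound $q_{1-\alpha}$ through Lemma~\ref{lem:dual_norm}, for instance via $p_\rho^\star(u)\le\|u\|_\infty$ by taking $z=0$. It follows that $q_{1-\alpha}(X)$ is a function of $X$ alone. It can even be computed by Monte Carlo from draws of $N(0,I_n)$, evaluating $p_\rho^\star$ through the linear program in \eqref{dual_norm_explicit}.

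For the probability bound, I will define $q_{1-\alpha}:=\inf\{t:\mathbb P(\Lambda\le t\mid X)\ge1-\alpha\}$. By right-continuity of the conditional distribution function, $\mathbb P(\Lambda\le q_{1-\alpha}\mid X)\ge 1-\alpha$. If $\lambda\ge c\,q_{1-\alpha}$, then $\lambda/c\ge q_{1-\alpha}$, and by monotonicity $\mathbb P(\Lambda\le\lambda/c\mid X)\ge 1-\alpha$. Taking expectation over $X$ then gives \eqref{regularization_event}. One subtlety is that if $\lambda$ is itself allowed to depend on $X$, the statement should be read conditionally on $X$ and only then integrated.

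The final claims follow directly. The choice $\lambda=c\,q_{1-\alpha}(X)$ depends only on $X$, $\rho$, $\alpha$ and $c$. Hence $\lambda_1=\lambda$ and $\lambda_2=\rho\lambda$ are independent of $\sigma$, which matches the reparametrization \eqref{SRCL_unified}.

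The only genuinely delicate step is measure-theoretic bookkeeping rather than a hard estimate. It consists of making the conditional law rigorous via independence of $\epsilon$ and $X$, and handling the null event $\epsilon=0$.
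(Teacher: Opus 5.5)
Your proposal is correct and follows essentially the same route as the paper's proof. Both arguments cancel $\sigma$ via positive homogeneity of $p_\rho^\star$ at the true residual $y-X\beta^0=\sigma\epsilon$, use independence of $\epsilon$ and $X$ to make the conditional law of $\Lambda$ and $q_{1-\alpha}(X)$ free of $\sigma$, and then apply the quantile definition and monotonicity conditionally on $X$ before integrating out. Your extra bookkeeping (the null event $\epsilon=0$, right-continuity of the conditional distribution function, and the observation that Assumption~\ref{assumption_design} is not needed for pivotality) only makes explicit what the paper leaves implicit.
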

\begin{proof}
	Under Assumption~\ref{assumption_noise}, the error vector in \eqref{lineareg} can be written as
	$y=X\beta^0+\sigma\epsilon$,
	where $\epsilon$ has independent standard Gaussian components.	
	All probability statements in the statistical analysis are understood conditionally on the observed design $X$ satisfying the stated assumptions. Since $\epsilon$ is independent of $X$, the same Gaussian noise law applies conditionally. Since the dual norm $p_\rho^\star(\cdot)$ is positively homogeneous, we have
	$$
	p_\rho^\star\left(\frac{X^{\top}(y-X{\beta^0})}{\sqrt n\|y-X{\beta^0}\|_2}\right)=p_\rho^\star\left(\frac{X^\top(\sigma\epsilon)}{\sqrt n\|\sigma\epsilon\|_2}\right)=p_\rho^\star\left(\frac{X^\top\epsilon}{\sqrt n\|\epsilon\|_2}\right).
	$$
	Hence, the random variable $\Lambda$ defined in \eqref{pivotal_quantity} is invariant with respect to the unknown scale parameter $\sigma$.	By independence of $\epsilon$ and $X$, its conditional distribution given $X$ and its conditional quantiles are independent of $\sigma$. The quantile condition therefore yields the stated probability bound conditionally on $X$, and hence also unconditionally.
	By the definition of $q_{1-\alpha}$, we get
	$$
	\mathbb P(\Lambda\leq q_{1-\alpha})\geq1-\alpha .
	$$
	If $\lambda\geq cq_{1-\alpha}$, then
	$$
	\mathbb P\left(\Lambda\leq\frac{\lambda}{c}\right)\geq1-\alpha .
	$$
	Therefore, $\lambda$ can be chosen without knowledge of $\sigma$.
	For a fixed $\rho$, the choices$\lambda_1=\lambda$ and $\lambda_2=\rho\lambda$	are also independent of $\sigma$.\qed
\end{proof}
    
As established in Theorem \ref{thm:pivotal_tuning}, the proposed SRCL estimator $\widehat\beta$ permits tuning‑parameter selection without requiring prior information on the noise level. Furthermore, under $\rho|S|<1$, the weak decomposability of $p_{\rho}$ with respect to $S$ renders the clustered Lasso penalty well‑suited to the square‑root regularization framework from \cite{stucky2017sharp}.

\section{Finite-sample Prediction Error Bound Analysis}\label{sec3-2}
\setcounter{equation}{0}
We next establish finite-sample error bounds for the proposed SRCL estimator $\widehat\beta$ defined in  \eqref{SRCL_unified}. 
The analysis is based on the weak decomposability of the penalty $p(\beta)$ (or $p_\rho(\beta)$) established in Theorem \ref{wdtheorem} and the pivotal choice of the tuning parameter established in Theorem \ref{thm:pivotal_tuning}.
For the nonzero-signal case, define
$$
S:=\{j:\beta_j^0\neq 0\},\qquad s:=|S|,
$$
where $\beta^0$ is the true regression coefficient vector in \eqref{lineareg}. Throughout the compatibility-based analysis below, assume $1\le s\le p$. 
We assume $\lambda_1>\lambda_2s$, or equivalently $\rho s<1$, so that $S$ is allowed by Theorem~\ref{wdtheorem}, recalling that $\lambda_1=\lambda$ and $\lambda_2=\rho\lambda$.

For any $\Delta\in\mathbb{R}^p$, recall that $\Delta_S$ denotes the vector obtained by retaining the components of $\Delta$ indexed by $S$ and setting all other components to zero, while $\Delta_{S^c}\in\mathbb{R}^{|S^c|}$ denotes the subvector of $\Delta$ corresponding to the index set $S^c$.
Let $p_{\rho}^c$ denote the norm on $\mathbb{R}^{|S^c|}$ associated with the weak decomposability of $p_\rho$ with respect to $S$. 
Then, from Definition \ref{def:weak_decompo-norm}, it holds that
$$
p_\rho(\Delta)\geq p_\rho(\Delta_S)+p^c_{\rho}(\Delta_{S^c}),\qquad \forall\Delta\in\mathbb{R}^p.
$$
Following the framework of \cite{van2014weakly,stucky2017sharp} for
weakly decomposable norms, for a constant $L>1$, we define the
cone associated with the penalty $p_\rho$ by
$$
\mathcal{C}_\rho(L,S):=\Big\{\Delta\in\mathbb{R}^p\mid p^c_{\rho}(\Delta_{S^c})\leq L p_\rho(\Delta_S)\Big\}.
$$
We then define the corresponding $p_\rho$-compatibility constant as
\begin{equation}\label{kappals}
\kappa_\rho(L,S):=\inf_{\substack{\Delta\in\mathcal{C}_\rho(L,S)\\
		p_\rho(\Delta_S)>0}} \left\{\frac{\sqrt{s}\|X\Delta\|_2}{\sqrt{n}p_\rho(\Delta_S)}\right\},
\end{equation}
and assume that $\kappa_\rho(L,S)>0$.

The following lemma shows that, under the pivotal event, the estimation error $\Delta$ satisfies an $(L,S)$-cone condition. Its proof follows the standard cone argument for weakly decomposable regularization penalties; see \cite{van2014weakly,stucky2017sharp}.
In what follows, we provide the details for the penalty $p_\rho$.

\begin{lemma}\label{lem:cone}
	Suppose that $\rho s<1$ and that the weak decomposability condition of $p_\rho$ with respect to $S:=\operatorname{supp}(\beta^0)$ holds.
	Let $\widehat{\beta}$ be the SRCL estimator defined in \eqref{SRCL_unified}, and let $\Delta:=\widehat{\beta}-\beta^0$.
	Let $c>1$ and suppose that
	$$
	p_\rho^\star\left(\frac{X^\top\epsilon}{\sqrt n\|\epsilon\|_2}\right)\leq\frac{\lambda}{c}.
	$$
	If additionally $c(1-\rho s)>1+\rho s$, then the following bound holds true:
	\begin{equation}\label{important}
	p_\rho^c(\Delta_{S^c})\leq\frac{(c+1)(1-\rho s)}{c(1-\rho s)-(1+\rho s)}p_\rho(\Delta_S),
	\end{equation}
	which indicates $\Delta\in\mathcal{C}_\rho(L,S)$ with
	$$
	L:=\frac{(c+1)(1-\rho s)}{c(1-\rho s)-(1+\rho s)}>1.
	$$
\end{lemma}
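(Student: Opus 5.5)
The plan is the standard basic-inequality argument for square-root regularization (as in \cite{stucky2017sharp}), adapted to $p_\rho$. The one extra ingredient is a comparison between $p_\rho$ evaluated on the zero-padded vector $\Delta_{S^c}$ and the complementary norm $p_\rho^c$. The factors $(1\pm\rho s)$ in $L$ should come exactly from that comparison.

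\textbf{Step 1 (basic inequality).} Since $\widehat\beta$ minimizes \eqref{SRCL_unified}, comparing its objective with that of $\beta^0$ and using $y-X\beta^0=\sigma\epsilon$ gives
\[
\tfrac{1}{\sqrt n}\|\sigma\epsilon-X\Delta\|_2+\lambda p_\rho(\widehat\beta)\le \tfrac{1}{\sqrt n}\|\sigma\epsilon\|_2+\lambda p_\rho(\beta^0).
\]
We have $\epsilon\neq0$ almost surely under Assumption~\ref{assumption_noise}, so $\|\cdot\|_2$ is differentiable at $\sigma\epsilon$. Convexity of $\|\cdot\|_2$ then yields
\[
\|\sigma\epsilon-X\Delta\|_2\ge\|\sigma\epsilon\|_2-\frac{\epsilon^\top X\Delta}{\|\epsilon\|_2}.
\]
The factor $\sigma$ cancels here, which is the pivotal feature. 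Combining, and applying the generalized Cauchy--Schwarz inequality $u^\top\Delta\le p_\rho^\star(u)\,p_\rho(\Delta)$ together with the hypothesis on $p_\rho^\star$, we get
\[
\lambda p_\rho(\widehat\beta)\le \lambda p_\rho(\beta^0)+\frac{\lambda}{c}\,p_\rho(\Delta).
\]
Dividing by $\lambda$ removes $\lambda$ entirely.

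\textbf{Step 2 (lower bound on $p_\rho(\widehat\beta)$).} Because $\beta^0_{S^c}=0$, we have $\widehat\beta_S=\beta^0+\Delta_S$ and $\widehat\beta_{S^c}=\Delta_{S^c}$. Weak decomposability (Theorem~\ref{wdtheorem} with $\rho s<1$) and then the triangle inequality give
\[
p_\rho(\widehat\beta)\ge p_\rho(\beta^0+\Delta_S)+p_\rho^c(\Delta_{S^c})\ge p_\rho(\beta^0)-p_\rho(\Delta_S)+p_\rho^c(\Delta_{S^c}).
\]

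\textbf{Step 3 (upper bound on $p_\rho(\Delta)$; the key step).} Start from $p_\rho(\Delta)\le p_\rho(\Delta_S)+p_\rho(\tilde\Delta)$, where $\tilde\Delta\in\mathbb R^p$ is $\Delta_{S^c}$ padded with zeros on $S$. Splitting the pairwise sum exactly as in the proof of Theorem~\ref{wdtheorem} gives
\[
p_\rho(\tilde\Delta)=(1+\rho s)\|\Delta_{S^c}\|_1+\rho\sum_{i<j\in S^c}|\Delta_i-\Delta_j|.
\]
Compare this with $p_\rho^c(\Delta_{S^c})=(1-\rho s)\|\Delta_{S^c}\|_1+\rho\sum_{i<j\in S^c}|\Delta_i-\Delta_j|$ from \eqref{defpcc}, divided by $\lambda$. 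Term by term, this yields $p_\rho(\tilde\Delta)\le \frac{1+\rho s}{1-\rho s}\,p_\rho^c(\Delta_{S^c})$. This comparison is the step I expect to need care, since $p_\rho^c$ is not the restriction of $p_\rho$. Everything else is routine.

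\textbf{Step 4 (conclusion).} Put $\kappa:=(1+\rho s)/(1-\rho s)$ and combine Steps 1--3:
\[
p_\rho^c(\Delta_{S^c})-p_\rho(\Delta_S)\le \tfrac1c\bigl(p_\rho(\Delta_S)+\kappa\, p_\rho^c(\Delta_{S^c})\bigr),
\]
that is,
\[
\Bigl(1-\tfrac{\kappa}{c}\Bigr)p_\rho^c(\Delta_{S^c})\le\Bigl(1+\tfrac1c\Bigr)p_\rho(\Delta_S).
\]
The assumption $c(1-\rho s)>1+\rho s$ is exactly $c>\kappa$, so we may divide. This gives \eqref{important} with $L=(c+1)/(c-\kappa)$, which after clearing denominators equals the stated expression. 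Finally, $L>1$ because the numerator minus the denominator equals $(c+1)(1-\rho s)-c(1-\rho s)+(1+\rho s)=2>0$, and the denominator is positive.
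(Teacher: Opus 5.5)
Your proposal is correct and follows essentially the same route as the paper: the basic inequality using the gradient of the square-root loss at $\sigma\epsilon$, the dual-norm bound, weak decomposability plus the reverse triangle inequality, and the comparison $p_\rho(\tilde\Delta)\le\frac{1+\rho s}{1-\rho s}\,p_\rho^c(\Delta_{S^c})$ obtained from the same explicit formulas for the two norms. Your explicit check that $L-1=2/\bigl(c(1-\rho s)-(1+\rho s)\bigr)>0$ and your remark that $\epsilon\neq0$ almost surely are small points that the paper leaves implicit.
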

\begin{proof}
	By the optimality of $\widehat{\beta}$ in \eqref{SRCL_unified}, we get
	$$
	\frac{1}{\sqrt n}\|y-X\widehat{\beta}\|_2+\lambda p_\rho(\widehat{\beta})\leq\frac{1}{\sqrt n}\|y-X\beta^0\|_2+\lambda p_\rho(\beta^0).
	$$
	Since $y-X\beta^0=\sigma\epsilon$, the convexity of the square-root loss gives
	$$
	\frac{1}{\sqrt n}\|y-X\widehat{\beta}\|_2\geq\frac{1}{\sqrt n}\|y-X\beta^0\|_2-\left\langle	\frac{X^\top\epsilon}{\sqrt n\|\epsilon\|_2},\Delta	\right\rangle,
	$$
	where $-X^\top\epsilon/(\sqrt n\|\epsilon\|_2)$ is a subgradient. Combining the above two inequalities yields
	$$
	\lambda\left\{p_\rho(\widehat{\beta})-p_\rho(\beta^0)\right\}\leq\left\langle\frac{X^\top\epsilon}{\sqrt n\|\epsilon\|_2},\Delta\right\rangle.
	$$
	By the dual norm inequality and the assumed condition, we get
	$$
	\left\langle\frac{X^\top\epsilon}{\sqrt n\|\epsilon\|_2},\Delta\right\rangle\leq p_\rho^\star\left(\frac{X^\top\epsilon}{\sqrt n\|\epsilon\|_2}\right)p_\rho(\Delta)
	\leq\frac{\lambda}{c}p_\rho(\Delta).
	$$
	Therefore, combining the two inequalities above, we obtain
	\begin{equation}\label{pp1c}
	p_\rho(\widehat{\beta})-p_\rho(\beta^0)	\leq\frac{1}{c}p_\rho(\Delta).
	\end{equation}
	Since $\operatorname{supp}(\beta^0)=S$ with $s=|S|$, we have $\beta^0_{S^c}=0$. By the weak decomposability of $p_\rho$ with respect to $S$, we have
	$$
	p_\rho(\widehat{\beta})	=p_\rho(\beta^0+\Delta) 	\geq p_\rho(\beta^0+\Delta_S)+p_\rho^c(\Delta_{S^c}).
	$$
	Moreover, by the reverse triangle inequality $p_\rho(\beta^0+\Delta_S)\geq p_\rho(\beta^0)-p_\rho(\Delta_S)$.
	Hence, it yields that
	\begin{equation}\label{weakno}
	p_\rho(\widehat{\beta})-p_\rho(\beta^0)	\geq	p_\rho^c(\Delta_{S^c})-p_\rho(\Delta_S).
	\end{equation}
	Combining the above inequality \eqref{pp1c}, it gives
	\begin{equation}\label{thisin}
	p_\rho^c(\Delta_{S^c})-p_\rho(\Delta_S)	\leq	\frac{1}{c}p_\rho(\Delta).
	\end{equation}
	For convenience, let $\Delta_{S^c}^{\perp}\in\mathbb{R}^p$ denote the	vector obtained by setting the components of $\Delta$ indexed by $S$
	to zero while keeping the components indexed by $S^c$ unchanged.	Then, $\Delta=\Delta_S+\Delta_{S^c}^{\perp}$. 
	By the triangle inequality, we get that
	\begin{equation}\label{threein}
	p_\rho(\Delta)	\leq	p_\rho(\Delta_S)+p_\rho(\Delta_{S^c}^\perp).
	\end{equation}
	Recall that $\lambda_1=\lambda$ and $\lambda_2=\rho\lambda$, so that	$p(\beta)=\lambda p_\rho(\beta)$. By the definition of $p^c$ in
	\eqref{defpcc}, we have
	\begin{equation}\label{prc1}
	p_\rho^c(\Delta_{S^c})=p^c(\Delta_{S^c})/{\lambda}=(1-\rho s)\|\Delta_{S^c}\|_1+\rho	\sum_{\substack{i<j\\i,j\in S^c}}|\Delta_i-\Delta_j|.
	\end{equation}
	On the other hand, since $\Delta_{S^c}^{\perp}\in\mathbb{R}^p$ is zero when its index located in $S$, we have
	\begin{equation}\label{prc2}
	p_\rho(\Delta_{S^c}^{\perp})=\|\Delta_{S^c}^\perp\|_1+\rho\sum_{i<j}	|(\Delta_{S^c}^{\perp})_i-(\Delta_{S^c}^{\perp})_j|
	=(1+\rho s)\|\Delta_{S^c}\|_1+\rho\sum_{\substack{i<j\\i,j\in S^c}}	|\Delta_i-\Delta_j|.
     \end{equation}
	Consequently, from \eqref{prc1} and \eqref{prc2}, it holds that
	$p_\rho(\Delta_{S^c}^{\perp})=p_\rho^c(\Delta_{S^c})+2\rho s\|\Delta_{S^c}\|_1$.
	Since $\rho s<1$ is assumed, we can get from \eqref{prc1}, that
	$p_\rho^c(\Delta_{S^c})\geq(1-\rho s)\|\Delta_{S^c}\|_1$,
	and hence we get from above both inequalities that 
	\begin{equation}\label{import22}
	p_\rho(\Delta_{S^c}^{\perp})\leq\frac{1+\rho s}{1-\rho s}p_\rho^c(\Delta_{S^c}).
	\end{equation}
	Therefore, combining with the inequality in \eqref{threein}, we get
	$$
	p_\rho(\Delta)	\leq	p_\rho(\Delta_S)	+	\frac{1+\rho s}{1-\rho s}	p_\rho^c(\Delta_{S^c}).
	$$
	Substituting this inequality into the right-hand side of \eqref{thisin}, we have
	$$
	p_\rho^c(\Delta_{S^c})-p_\rho(\Delta_S)	\leq\frac{1}{c}\left(p_\rho(\Delta_S)	+\frac{1+\rho s}{1-\rho s}p_\rho^c(\Delta_{S^c})\right),
	$$
	or, equivalently,
	$$
	\left(1-\frac{1+\rho s}{c(1-\rho s)}\right)p_\rho^c(\Delta_{S^c})
	\leq\left(1+\frac{1}{c}\right)p_\rho(\Delta_S).
	$$
	Moreover, if $c(1-\rho s)>1+\rho s$, then we get 
	$$
	p_\rho^c(\Delta_{S^c})\leq\frac{(c+1)(1-\rho s)}{c(1-\rho s)-(1+\rho s)}p_\rho(\Delta_S),
	$$
	which implies that the desired result \eqref{important} is true. 
    \qed
	
\end{proof}

For convenience, we introduce the following event:
\begin{equation}\label{defbige}
	\mathcal E:=\left\{p_\rho^*\left(\frac{X^\top\epsilon}{\sqrt n\|\epsilon\|_2}\right)\leq\frac{\lambda}{c}\right\}.
\end{equation}
This event encodes a key dual‑norm condition on the noise‑design product $X^\top\epsilon$, which will be invoked in the following analysis.
Building upon the cone membership result from Lemma \ref{lem:cone}, the following theorem derives a non‑asymptotic upper bound on the prediction error for the SRCL estimator. The proof is similar in spirit to the prediction-error argument in Theorem 9 of \cite{stucky2017sharp}, with the details adapted to the penalty $p_\rho$.
 
\begin{theorem}\label{lem:sqrt_oracle}
	Suppose that $\rho s<1$ and that the weak decomposability condition of $p_\rho$ with respect to $S:=\operatorname{supp}(\beta^0)$ holds.
	Let $\widehat{\beta}$ be the SRCL estimator defined in \eqref{SRCL_unified}, and let $\Delta:=\widehat{\beta}-\beta^0$.
	Suppose that the event $\mathcal E$ defined in \eqref{defbige} holds and that $\Delta\in\mathcal C_\rho(L,S)$. Define the noise‑bounded event
	$$
	\mathcal E_0:=\left\{\frac{\|\epsilon\|_2}{\sqrt n}\leq M\right\},
	$$
	where $M>0$ is a constant. Suppose further that, for some fixed constant $r\in(0,1)$, it holds that 
	\begin{equation}\label{rbound}
	\frac{\lambda\sqrt{s}}{\kappa_\rho(L,S)}\leq r.
	\end{equation}
	Here, $\kappa_\rho(L,S)$ is the $p_\rho$-compatibility constant defined in \eqref{kappals}. 
	Then, on the joint event $\mathcal E\cap\mathcal E_0$, it holds that
	$$
	\frac1{\sqrt n}\|X\Delta\|_2\leq C_0\sigma\lambda\frac{\sqrt{s}}{\kappa_\rho(L,S)},
	$$
	where $C_0>0$ is a positive constant depending only on the given constants $M$, $r$, $c$, and $L$.
\end{theorem}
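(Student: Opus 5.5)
We start from the basic inequality given by optimality of $\widehat\beta$ in \eqref{SRCL_unified}:
\[
\frac{1}{\sqrt n}\|y-X\widehat\beta\|_2+\lambda p_\rho(\widehat\beta)\le \frac{1}{\sqrt n}\|\sigma\epsilon\|_2+\lambda p_\rho(\beta^0).
\]
The convexity bound used in Lemma~\ref{lem:cone} is too crude here, because it discards the quadratic term. Instead we square the residual. Write $y-X\widehat\beta=\sigma\epsilon-X\Delta$, so that
\[
\frac1n\|\sigma\epsilon-X\Delta\|_2^2=\frac{\sigma^2\|\epsilon\|_2^2}{n}-\frac{2\sigma}{n}\epsilon^\top X\Delta+\frac1n\|X\Delta\|_2^2 .
\]
Set $a:=\|\sigma\epsilon\|_2/\sqrt n$ and $b:=\|y-X\widehat\beta\|_2/\sqrt n$. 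The optimality inequality gives $b\le a+\lambda\{p_\rho(\beta^0)-p_\rho(\widehat\beta)\}$. Using \eqref{weakno}, we get $p_\rho(\beta^0)-p_\rho(\widehat\beta)\le p_\rho(\Delta_S)-p_\rho^c(\Delta_{S^c})\le p_\rho(\Delta_S)$. This yields the two key facts
\[
b\le a+\lambda p_\rho(\Delta_S),\qquad b^2-a^2=\frac1n\|X\Delta\|_2^2-\frac{2\sigma}{n}\epsilon^\top X\Delta .
\]

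The next step is to control each term. On $\mathcal E$, the dual norm inequality gives
\[
\frac{\sigma}{n}|\epsilon^\top X\Delta|=a\,\Big|\Big\langle \frac{X^\top\epsilon}{\sqrt n\|\epsilon\|_2},\Delta\Big\rangle\Big|\le a\frac{\lambda}{c}p_\rho(\Delta).
\]
Since $\Delta\in\mathcal C_\rho(L,S)$, we bound $p_\rho(\Delta)\le p_\rho(\Delta_S)+\frac{1+\rho s}{1-\rho s}p_\rho^c(\Delta_{S^c})$ as in \eqref{import22}, and hence $p_\rho(\Delta)\le C_1 p_\rho(\Delta_S)$. To keep the constant dependent only on $c,L$, I would absorb the factor $(1+\rho s)/(1-\rho s)$ through the relation between $L$ and $c$ from Lemma~\ref{lem:cone}, or else simply allow it as a constant. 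The compatibility constant then gives $p_\rho(\Delta_S)\le \sqrt s\,\|X\Delta\|_2/(\sqrt n\,\kappa_\rho(L,S))$. Writing $u:=\|X\Delta\|_2/\sqrt n$ and $\delta:=\lambda\sqrt s/\kappa_\rho(L,S)\le r$, we obtain $\lambda p_\rho(\Delta_S)\le \delta u$.

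Combining these, $u^2=b^2-a^2+\frac{2\sigma}{n}\epsilon^\top X\Delta\le (b-a)(b+a)+2aC_1\delta u/c$. Since $b-a\le\delta u$ and $b+a\le 2a+\delta u$, this gives
\[
u^2\le \delta u(2a+\delta u)+\frac{2C_1}{c}a\delta u .
\]
This is where the main obstacle lies: closing the argument requires $1-\delta^2>0$, which holds because $\delta\le r<1$. Dividing by $u$ (the case $u=0$ is trivial) gives $u(1-r^2)\le 2a\delta(1+C_1/c)$.

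Finally, on $\mathcal E_0$ we have $a\le\sigma M$. Therefore
\[
u\le \frac{2M(1+C_1/c)}{1-r^2}\,\sigma\lambda\frac{\sqrt s}{\kappa_\rho(L,S)},
\]
which gives $C_0$ depending only on $M,r,c,L$. We note that the case $b<a$, where $b-a$ is negative, is harmless, since the bound $(b-a)(b+a)\le\delta u(b+a)$ still holds whenever $b-a\le\delta u$ and $b+a\ge0$.
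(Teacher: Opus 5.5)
Your proof is correct and follows essentially the paper's route. You square the basic inequality $\frac{1}{\sqrt n}\|\sigma\epsilon-X\Delta\|_2\le\frac{\sigma\|\epsilon\|_2}{\sqrt n}+\lambda p_\rho(\Delta_S)$, control the cross term on $\mathcal E$ via $p_\rho(\Delta)\le C_1p_\rho(\Delta_S)$, apply the compatibility constant, and absorb the $\delta^2u^2$ term using $\delta\le r<1$. This gives exactly the paper's constant $C_0=2M(1+C_1/c)/(1-r^2)$; your $a,b$ are the paper's $b,a$, and your $C_1$ is its $\eta$.

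Your caveat about the $\rho s$-dependence of $C_1$ applies equally to the paper. When $L$ is the cone parameter of Lemma~\ref{lem:cone}, one has $1+C_1/c=L$, so $C_0=2ML/(1-r^2)$ genuinely depends only on $M$, $r$, $L$.
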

\begin{proof}
	Let	$a:={\|X\Delta\|_2}/{\sqrt n}$ and	$b:={\sigma\|\epsilon\|_2}/{\sqrt n}$.
	By the optimality of $\widehat{\beta}$ and the relation
	$y-X\beta^0=\sigma\epsilon$, we have
	$$
	\frac{1}{\sqrt n}\|\sigma\epsilon-X\Delta\|_2	+\lambda p_\rho(\widehat{\beta})	\leq	b+\lambda p_\rho(\beta^0).
	$$
	Using the fact $p_\rho^c(\Delta_{S^c})\geq 0$ and the inequality in \eqref{weakno}, i.e., $p_\rho(\widehat{\beta})-p_\rho(\beta^0)	\geq p_\rho^c(\Delta_{S^c})-p_\rho(\Delta_S)$,
	we have
	\begin{equation}\label{longin}
	\frac{1}{\sqrt n}\|\sigma\epsilon-X\Delta\|_2	\leq	b+\lambda \Big(p_\rho(\beta^0)- p_\rho(\widehat{\beta})\Big)\leq	b+\lambda \Big(p_\rho(\Delta_S)-p_\rho^c(\Delta_{S^c})\Big)\leq	b+\lambda p_\rho(\Delta_S).
	\end{equation}
     Secondly, since $\rho s<1$ and $\Delta\in\mathcal C_\rho(L,S)$, i.e., $p_\rho^c(\Delta_{S^c})\leq Lp_\rho(\Delta_S)$, combining with \eqref{import22}, we have
	$$
	p_\rho(\Delta_{S^c}^{\perp})
	\leq
	\frac{1+\rho s}{1-\rho s}
	p_\rho^c(\Delta_{S^c})
	\leq
	\frac{1+\rho s}{1-\rho s}
	Lp_\rho(\Delta_S).
	$$
	Therefore, noting $\Delta=\Delta_S+\Delta^\perp_{S^c}$ and using triangle inequality, we get 
	\begin{equation}\label{boundp}
	p_\rho(\Delta)	\leq	p_\rho(\Delta_S)	+p_\rho(\Delta_{S^c}^{\perp})	\leq	\eta p_\rho(\Delta_S),
	\end{equation}
	where $\eta:=1+{L(1+\rho s)}/{(1-\rho s)}$. 
	Thirdly, we define
	$$
	q:=	\frac{\langle X^\top\epsilon,\Delta\rangle}{\sqrt n\|\epsilon\|_2}.
	$$
	On the event $\mathcal E$, using the dual norm inequality, it gives
	$$
	q	=\left\langle\frac{X^\top\epsilon}{\sqrt n\|\epsilon\|_2},\Delta\right\rangle\leq p_\rho^\star\left(\frac{X^\top\epsilon}{\sqrt n\|\epsilon\|_2}	\right)p_\rho(\Delta).
	$$	
	By the definition of $\mathcal E$ and the bound on
	$p_\rho(\Delta)$ given in  \eqref{boundp}, it follows that
	$$
	q\leq\frac{\lambda}{c}p_\rho(\Delta)	\leq	\frac{\lambda \eta}{c}p_\rho(\Delta_S).
	$$
	Fourthly, by the definitions of	$a$, $b$, and $q$, we have
	$$
	\frac{1}{n}\|\sigma\epsilon-X\Delta\|_2^2	=	b^2-2bq+a^2.
	$$
	Hence, from \eqref{longin}, we get 
	$$
	b^2-2bq+a^2	\leq\left(b+\lambda p_\rho(\Delta_S)\right)^2.
	$$
	Expanding the right-hand side and cancelling $b^2$, it gives $	a^2	\leq	2bq	+	2b\lambda p_\rho(\Delta_S)	+	\lambda^2p_\rho^2(\Delta_S)$.
	Using the above bound on $q$, we obtain
	\begin{equation}\label{a2b}
	a^2	\leq	2b\lambda	\left(1+\frac{\eta}{c}\right)	p_\rho(\Delta_S)	+	\lambda^2p_\rho^2(\Delta_S).
	\end{equation}
	Fifthly, since	$\Delta\in\mathcal C_\rho(L,S)$, by the definition of	$\kappa_\rho(L,S)$, whenever $p_\rho(\Delta_S)>0$, it gives that 
	$$
	\frac{\sqrt{s}\|X\Delta\|_2}{\sqrt n\,p_\rho(\Delta_S)}\geq	\kappa_\rho(L,S),
	$$
	which indicates 
	$$
	p_\rho(\Delta_S)\leq\frac{\sqrt{s}\|X\Delta\|_2}{\kappa_\rho(L,S)\sqrt{n}}=\frac{\sqrt{s}}{\kappa_\rho(L,S)}a.
	$$
	If $p_\rho(\Delta_S)=0$, then the preceding inequality for $a^2$
	implies $a=0$, and the desired result follows immediately. Therefore,
	we may assume that $p_\rho(\Delta_S)>0$ and use the above bound.
	Substituting the bound of $p_\rho(\Delta_S)$ into the preceding inequality \eqref{a2b} yields
	\begin{equation}\label{a22bd}
	a^2	\leq	2b\lambda	\left(1+\frac{\eta}{c}\right)	\frac{\sqrt{s}}{\kappa_\rho(L,S)}a	+	\frac{\lambda^2s}{\kappa_\rho^2(L,S)}a^2.
	\end{equation}
	By the assumed condition \eqref{rbound}, we have
	$$
	\frac{\lambda^2s}{\kappa_\rho^2(L,S)}\leq r^2.
	$$
	Therefore, the inequality \eqref{a22bd} can be reorganized as 
	$$
	(1-r^2)a^2\leq  2b\lambda\left(1+\frac{\eta}{c}\right)\frac{\sqrt{s}}{\kappa_\rho(L,S)}a.
	$$
	If $a=0$, the result is immediate. Otherwise, since $r\in(0,1)$,
	we have $1-r^2>0$, and dividing both sides by $a$ gives
	$$
	a	\leq	\frac{2b\lambda}{1-r^2}	\left(1+\frac{\eta}{c}\right)\frac{\sqrt{s}}{\kappa_\rho(L,S)}.
	$$
	Finally, on the event $\mathcal E_0$, we have $b={\sigma\|\epsilon\|_2}/{\sqrt n}	\leq	M\sigma$.
	Thus, on the event $\mathcal E\cap\mathcal E_0$, it holds by noting the definition of $a$ that
	$$
	a=\frac1{\sqrt n}\|X\Delta\|_2	\leq	\frac{2M}{1-r^2}	\left(1+\frac{\eta}{c}\right)	\frac{\sigma\lambda\sqrt{s}}	{\kappa_\rho(L,S)}.
	$$
	Setting
	$$
	C_0:=\frac{2M}{1-r^2}\left(1+\frac{\eta}{c}\right),
	$$
	we obtain
	$$
	\frac1{\sqrt n}\|X\Delta\|_2\leq C_0\sigma\lambda\frac{\sqrt{s}}{\kappa_\rho(L,S)}.
	$$
	This completes the proof.
\end{proof}

Combining the pivotal tuning result, the cone condition in
Lemma \ref{lem:cone}, and the prediction-error bound in
Theorem \ref{lem:sqrt_oracle}, we obtain the following
high-probability prediction-error bound.

\begin{theorem}[Prediction Error Bound]\label{thm:prediction}
	Suppose that Assumptions \ref{assumption_noise}--\ref{assumption_design} hold and that $\rho s<1$.
	Let $\widehat{\beta}$ be the SRCL estimator defined in \eqref{SRCL_unified}, and let $\Delta=\widehat{\beta}-\beta^0$.
	Let $c>1$ satisfy $c(1-\rho s)>1+\rho s$, and define
	\begin{equation*}
		L:=\frac{(c+1)(1-\rho s)}{c(1-\rho s)-(1+\rho s)}>1,
	\end{equation*}
	i.e., the cone parameter obtained in Lemma \ref{lem:cone}.
	Suppose that $\lambda\geq cq_{1-\alpha/2}$, where $q_{1-\alpha/2}$ is the $(1-\alpha/2)$-quantile of $\Lambda$.
	Assume $\kappa_\rho(L,S)>0$ and
	$$
	\frac{\lambda\sqrt{s}}{\kappa_\rho(L,S)}\leq r<1.
	$$
	Then
	$$
	\mathbb{P}\left(\frac1{\sqrt n}\|X(\widehat{\beta}-\beta^0)\|_2	\leq C\sigma\lambda	\frac{\sqrt{s}}{\kappa_\rho(L,S)}	\right)	\geq 1-\alpha,
	$$
	where $C>0$ is a positive constant independent of $n$ and $p$.
\end{theorem}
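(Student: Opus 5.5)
The plan is to combine Theorem~\ref{thm:pivotal_tuning}, Lemma~\ref{lem:cone} and Theorem~\ref{lem:sqrt_oracle} through a union bound over two events, each of probability at least $1-\alpha/2$. The first event is $\mathcal E$ from \eqref{defbige}. Since $\lambda\ge c\,q_{1-\alpha/2}$, Theorem~\ref{thm:pivotal_tuning} applied with $\alpha/2$ in place of $\alpha$ gives $\mathbb P(\mathcal E)\ge 1-\alpha/2$, conditionally on $X$ and hence unconditionally.

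The second event is the noise-bounded event $\mathcal E_0=\{\|\epsilon\|_2/\sqrt n\le M\}$, with $M$ chosen to depend only on $\alpha$. Under Assumption~\ref{assumption_noise}, $\|\epsilon\|_2^2\sim\chi^2_n$. I will use the Laurent--Massart bound
\[
\mathbb P\bigl(\|\epsilon\|_2^2\ge n+2\sqrt{nt}+2t\bigr)\le e^{-t}.
\]
Taking $t=\log(2/\alpha)$ and using $n\ge1$, we have $n+2\sqrt{nt}+2t\le n(1+\sqrt{t})^2+t\le n\bigl((1+\sqrt t)^2+t\bigr)$. So
\[
M:=\sqrt{(1+\sqrt{\log(2/\alpha)})^2+\log(2/\alpha)}
\]
gives $\mathbb P(\mathcal E_0^c)\le\alpha/2$. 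Alternatively, Gaussian concentration of the $1$-Lipschitz map $\epsilon\mapsto\|\epsilon\|_2$ around a mean that is at most $\sqrt n$ would do. In either case $M$ does not depend on $n$ or $p$, which is exactly what is needed for $C$ to be free of $n,p$. The union bound then gives $\mathbb P(\mathcal E\cap\mathcal E_0)\ge1-\alpha$.

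On $\mathcal E\cap\mathcal E_0$, Lemma~\ref{lem:cone} applies because $\rho s<1$, $c(1-\rho s)>1+\rho s$, and $S$ is allowed by Theorem~\ref{wdtheorem}. It yields $\Delta\in\mathcal C_\rho(L,S)$ with the stated $L$. All hypotheses of Theorem~\ref{lem:sqrt_oracle} are then satisfied, including \eqref{rbound}, so
\[
\frac{1}{\sqrt n}\|X\Delta\|_2\le C_0\sigma\lambda\frac{\sqrt s}{\kappa_\rho(L,S)},\qquad C_0=\frac{2M}{1-r^2}\Bigl(1+\frac{\eta}{c}\Bigr),
\]
where $\eta=1+L(1+\rho s)/(1-\rho s)$. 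Setting $C:=C_0$ and using event inclusion gives the claimed probability bound.

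The main obstacle is checking that $C$ is genuinely independent of $n$ and $p$. $M$ depends only on $\alpha$, and $r$ and $c$ are fixed, but $\eta$ involves $\rho s$, which could in principle vary with $p$. The argument should note that $(1+\rho s)/(1-\rho s)$ and $L$ depend only on the product $\rho s$, which is treated as a fixed constant bounded away from $1$ through the standing condition $c(1-\rho s)>1+\rho s$. So $C$ depends only on $(\alpha,c,r,\rho s)$. A smaller technical point is the case $y-X\widehat\beta=0$, but Theorem~\ref{lem:sqrt_oracle} only uses the optimality inequality and never divides by the residual, so nothing extra is needed there.
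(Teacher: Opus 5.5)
Your proposal is correct and follows the paper's proof: a union bound over $\mathcal E$ (from Theorem~\ref{thm:pivotal_tuning} at level $\alpha/2$) and $\mathcal E_0$ (from Laurent--Massart), followed by Lemma~\ref{lem:cone} and Theorem~\ref{lem:sqrt_oracle} with $C=C_0$. The only difference is a small improvement: the paper takes $M=(1+2\sqrt{\log(2/\alpha)/n}+2\log(2/\alpha)/n)^{1/2}$, which depends on $n$, whereas your $n$-free upper bound $M=\sqrt{(1+\sqrt{\log(2/\alpha)})^2+\log(2/\alpha)}$, together with your remark that $L$ and $\eta$ depend only on $\rho s$, makes the claimed independence of $C$ from $n$ and $p$ explicit.
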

\begin{proof}
	By the definition of $q_{1-\alpha/2}$ and the choice $\lambda\geq cq_{1-\alpha/2}$, Theorem \ref{thm:pivotal_tuning} yields
	$\mathbb P(\mathcal E)\geq	1-{\alpha}/{2}$.
	Moreover, since Assumption \ref{assumption_noise} shows that $\epsilon_i$ are independent standard Gaussian random variables, then $\|\epsilon\|_2^2\sim\chi_n^2$, i.e., the squared $\ell_2$-norm of $\epsilon$ follows a chi-square distribution.
	 Let
	$$
	M:=	\left(1+2\sqrt{\frac{\log(2/\alpha)}{n}}+\frac{2\log(2/\alpha)}{n}\right)^{1/2},
	$$
	and define an event $\mathcal E_{0}:=\left\{{\|\epsilon\|_2}/{\sqrt n}\leq M\right\}$. 
	By the Laurent--Massart inequality \citep{laurent2000adaptive}, it yields that $\mathbb P(\mathcal E_{0})\geq 1-{\alpha}/{2}$.
	Therefore, by the union bound, we get 
	$$
		\mathbb{P}\left(\mathcal{E}\cap\mathcal{E}_0\right)	=1-\mathbb{P}\left(\mathcal{E}^c\cup\mathcal{E}_0^c\right)
		\geq 1-\mathbb{P}\left(\mathcal{E}^c\right)-\mathbb{P}\left(\mathcal{E}_0^c\right)	\geq 1-\frac{\alpha}{2}-\frac{\alpha}{2}=1-\alpha.
	$$
	On the event $\mathcal E$, by Lemma \ref{lem:cone}, we know that $p_\rho^c(\Delta_{S^c})	\leq Lp_\rho(\Delta_S)$ and  $\Delta\in\mathcal C_\rho(L,S)$.
	Then, combined with the condition 
	$$
	\frac{\lambda\sqrt{s}}{\kappa_\rho(L,S)}\leq r<1,
	$$
	all conditions of Theorem \ref{lem:sqrt_oracle} are satisfied on
	$\mathcal E\cap\mathcal E_{0}$. Therefore, Theorem \ref{lem:sqrt_oracle} gives 
	$$
	\frac1{\sqrt n}\|X\Delta\|_2	\leq	C_0\sigma\lambda	\frac{\sqrt{s}}{\kappa_\rho(L,S)},
	$$
	where
	$$
	C_0=\frac{2M}{1-r^2}\left(1+\frac{\eta}{c}\right),	\qquad\eta=	1+\frac{1+\rho s}{1-\rho s}L.
	$$
	Since $\Delta=\widehat{\beta}-\beta^0$, it follows that
	$$
	\frac1{\sqrt n}\|X(\widehat{\beta}-\beta^0)\|_2	\leq C_0\sigma\lambda\frac{\sqrt{s}}{\kappa_\rho(L,S)}
	$$
	on the event $\mathcal E\cap\mathcal E_{0}$.
	Combining this inequality with $\mathbb P\left(\mathcal E\cap\mathcal E_{0}\right)\geq1-\alpha$,
	we obtain that 
	$$
	\mathbb P\left(\frac1{\sqrt n}\|X(\widehat{\beta}-\beta^0)\|_2	\leq	C_0\sigma\lambda	\frac{\sqrt{s}}{\kappa_\rho(L,S)}\right)\geq1-\alpha.
	$$
	We may thus take the universal constant $C=C_0$ in the statement of Theorem \ref{thm:prediction}.
	The proof is complete.
\end{proof}

\noindent\textit{Zero-signal case.}
If $\beta^0=0$, no compatibility constant is needed. On $\mathcal E$, optimality and convexity of the square-root loss give
\[
\lambda p_\rho(\widehat\beta)
\le \left\langle\frac{X^\top\epsilon}{\sqrt n\|\epsilon\|_2},\widehat\beta\right\rangle
\le \frac{\lambda}{c}p_\rho(\widehat\beta).
\]
Since $c>1$ and $p_\rho$ is a norm, every minimizer satisfies $\widehat\beta=0$ on $\mathcal E$. With $\lambda\ge cq_{1-\alpha}$, this conclusion holds with probability at least $1-\alpha$.
At the end of this section, we emphasize that the clustered Lasso penalty is weakly decomposable with respect to $S$ under $\rho s<1$ (Theorem~\ref{wdtheorem}). This is a key structural condition in the sharp-oracle-inequality framework of \cite{stucky2017sharp}. Instead of deriving sharp oracle inequalities, we establish a non-asymptotic finite-sample prediction-error bound for the proposed SRCL estimator $\widehat{\beta}$ in Theorem \ref{thm:prediction}. Our theoretical analysis is built upon the $(L,S)$-cone condition (Lemma \ref{lem:cone}) induced by weak decomposability together with the penalty-adapted compatibility constant $\kappa_\rho(L,S)$.

\section{An Efficient Semismooth Newton-based Proximal Augmented Lagrangian Algorithm}\label{sec4}
\setcounter{equation}{0}
In this section, we develop an efficient algorithm for solving the square-root clustered Lasso model \eqref{square_root_clustered_lasso_model} through its dual problem, providing a practical implementation of the SRCL method.
Our algorithm is based on a semismooth Newton augmented Lagrangian framework, whose efficiency relies on exploiting the structure of the dual formulation. 
For this purpose, we first reformulate the primal problem by introducing a slack variable $z\in\mathbb{R}^n$ to separate the square-root loss from the clustered Lasso penalty. 
The resulting equivalent primal problem is
\begin{equation}\label{p_problem}
	\min_{\beta\in\mathbb{R}^p,\;z\in\mathbb{R}^n}
	\Big\{\frac{1}{\sqrt{n}}\|z\|_2+p(\beta)\mid	X\beta-y=z	\Big\},
\end{equation}
whose Lagrangian dual problem takes the following equivalent form
\begin{equation}\label{d_problem}
	\min_{w\in\mathbb{R}^n}
	\left\{	f(w):=	p^*(-X^\top w)	+\delta_{B_2^{(1/\sqrt{n})}}(w)	+\langle w,y\rangle	\right\},
\end{equation}
where $p^*$ denotes the Fenchel conjugate of the clustered Lasso penalty $p$, and $\delta_{B_2^{(1/\sqrt{n})}}$ is the indicator function of the Euclidean ball $B_2^{(1/\sqrt{n})}$ of radius $1/\sqrt n$.

\subsection{A Proximal Augmented Lagrangian Method for the Dual Problem}
Following the perturbation framework of \cite[Definition 11.45]{rockafellar1998variational},
we introduce the proper closed  perturbation convex function
$\widetilde f:\mathbb R^n\times\mathbb R^p\times\mathbb R^n\to(-\infty,+\infty]$
defined by
$$
\widetilde{f}(w, \xi_1, \xi_2) := p^*\left(-X^{\top} w + \xi_1\right) + \delta_{B_2^{(1/\sqrt{n})}}\left(w + \xi_2\right) + \langle w, y \rangle,
$$
where $(\xi_1,\xi_2)$ are perturbation variables.
Clearly, $\widetilde{f}(w,0,0)=f(w)$ for all $w\in\mathbb{R}^n$, i.e., $\widetilde{f}$ is an extension of the function $f$ appearing in the dual objective \eqref{d_problem}.			
The Lagrangian function  $L:\mathbb R^n\times\mathbb R^p\times\mathbb R^n   \to(-\infty,+\infty]$ of \eqref{d_problem} associated with a convex perturbation function  $\widetilde{f}$ is defined by
\begin{align}
	L(w;\beta,z)
	&:= \inf_{(\xi_1,\xi_2)}
	\Big\{ \widetilde f(w,\xi_1,\xi_2) -\langle\beta,\xi_1\rangle	-\langle z,\xi_2\rangle \Big\}\nonumber \\[2mm]
	&= 	\inf_{(\xi_1,\xi_2)} 	\Big\{ p^*(-X^\top w+\xi_1) +\delta_{B_2^{(1/\sqrt{n})}}(w+\xi_2)  +\langle w,y\rangle 	-\langle\beta,\xi_1\rangle 	-\langle z,\xi_2\rangle
	\Big\} \nonumber\\[2mm]
	&= \inf_{\xi_1} \Big\{p^*(-X^\top w+\xi_1) -\langle\beta,\xi_1\rangle \Big\} + \inf_{\xi_2}
	\Big\{\delta_{B_2^{(1/\sqrt{n})}}(w+\xi_2)	-\langle z,\xi_2\rangle\Big\} +\langle w,y\rangle,\label{eq43}
\end{align}
where $\beta\in\mathbb R^p$ and $z\in\mathbb R^n$ are multipliers.		
	The first term in \eqref{eq43} is equivalent to 
	\begin{align*}
		\inf_{\xi_1}  \left\{  p^*(-X^\top w+\xi_1)	-\langle\beta,\xi_1\rangle  \right\}
		=    	\inf_u   \left\{ p^*(u) -\langle\beta,u+X^\top w\rangle  \right\}      
		&=    	-\langle\beta,X^\top w\rangle  +\inf_u  \left\{  p^*(u)-\langle\beta,u\rangle \right\}      \\
		&=    	-p(\beta)-\langle\beta,X^\top w\rangle,
	\end{align*}
	whereas the second term is equivalent to
	\begin{align*}
		\inf_{\xi_2} \left\{ \delta_{B_2^{(1/\sqrt{n})}}(w+\xi_2) -\langle z,\xi_2\rangle  \right\}
		=    	\inf_v  \left\{  \delta_{B_2^{(1/\sqrt{n})}}(v)  -\langle z,v-w\rangle    	\right\}      
		&=    	\langle z,w\rangle  +   \inf_v   \left\{  \delta_{B_2^{(1/\sqrt{n})}}(v) -\langle z,v\rangle \right\}      \\
		&=    	\langle z,w\rangle -\delta_{B_2^{(1/\sqrt{n})}}^*(z)      
		=    	\langle z,w\rangle -\frac{1}{\sqrt{n}}\|z\|_2.
	\end{align*}
	Therefore, the associated Lagrangian function   $L(w;\beta,z)$  takes the following form
	\begin{equation}\label{lagfun}
		L(w;\beta,z)= -p(\beta) - \frac{1}{\sqrt{n}}\|z\|_2 + \langle y+z-X\beta,  w \rangle.
	\end{equation}
	The signs in the above Lagrangian function are determined by the sign convention adopted in the perturbation framework.
	
		Following the proximal augmentation framework in \cite[Example 11.57]{rockafellar1998variational} and using a similar derivation with \cite{li2018highly}, for any $\sigma>0$, we obtain the proximal augmented Lagrangian function as follows
	\begin{align*}\label{aug_L_function}
		&{L}_{\sigma}(w;\beta,z) \\
		=& \sup_{\substack{(\xi_1, \xi_2) }} \left\{ L(w; \xi_1, \xi_2) - \frac{1}{2\sigma}\|\xi_1 - \beta\|_2^2 - \frac{1}{2\sigma}\|\xi_2 - z\|_2^2 \right\} \\[2mm]
		=& \sup_{\xi_1}	\left\{	-p(\xi_1)-\langle X^\top w,\xi_1\rangle	-\frac{1}{2\sigma}\|\xi_1-\beta\|_2^2	\right\} 			
		+\sup_{\xi_2}	\left\{	-\frac{1}{\sqrt{n}}\|\xi_2\|_2+\langle w,\xi_2\rangle	-\frac{1}{2\sigma}\|\xi_2-z\|_2^2	\right\}+\langle w,y\rangle\\[2mm]
		=& - \Phi_{\sigma p(\cdot)} \Big(\beta-\sigma X^{\top}w\Big) + \frac{1}{2\sigma}\|\beta-\sigma X^{\top}w\|_2^2 - \frac{1}{2\sigma}\|\beta\|_2^2\\[2mm]
		&
		- \Phi_{\frac{\sigma}{\sqrt{n}} \|\cdot\|_{2}} \Big(z+\sigma w\Big) + \frac{1}{2\sigma}\|z+\sigma w \|_2^2 - \frac{1}{2\sigma}\|z\|_2^2 +\langle w,y\rangle,
	\end{align*}
	where $\Phi$ denotes the Moreau envelope defined in \eqref{moreau}. 
	Moreover, by the definition of the proximal mapping, we know that each of the two supremum subproblems admits a unique maximizer given by the corresponding proximal mapping. Specifically,
	\begin{align*}
		\arg\max_{\xi_1\in \mathbb{R}^p}\left\{	-p(\xi_1)-\langle X^\top w,\xi_1\rangle	-\frac{1}{2\sigma}\|\xi_1-\beta\|_2^2\right\}
		&=	\operatorname{Prox}_{\sigma p(\cdot)}	\big(\beta-\sigma X^\top w\big),\\[2mm]
		\arg\max_{\xi_2\in \mathbb{R}^n}	\left\{	-\frac{1}{\sqrt n}\|\xi_2\|_2+\langle w,\xi_2\rangle-\frac{1}{2\sigma}\|\xi_2-z\|_2^2\right\}
		&=	\operatorname{Prox}_{\frac{\sigma}{\sqrt{n}}\|\cdot\|_2}	\big(z+\sigma w\big).
	\end{align*}
	Therefore, the multipliers $(\beta,z)$ in the proximal augmented Lagrangian method can be  explicitly updated.

    Based on the proximal augmented Lagrangian function, we propose a proximal augmented Lagrangian method for solving the dual problem \eqref{d_problem}; see Algorithm~\ref{Algorithm 4.1}. The main computational effort lies in Step~1, which involves solving the subproblem \eqref{sub_problem}. We employ an efficient semismooth Newton method to solve this subproblem, which will be presented in the next subsection. We refer to the resulting method as the semismooth Newton-based proximal augmented Lagrangian method, abbreviated as N-ALM.

	\begin{framed}
		\noindent
		\textbf{Algorithm 5.1: N-ALM}\label{Algorithm 4.1}
		\vskip 3.0mm \hrule \vskip 3mm
		\noindent{\textbf{Initialization.}} Given are a positive nondecreasing sequence $\{\sigma_k\}_{k\geq 0}$ and a positive nonincreasing sequence $\{\tau_k\}_{k\geq 0}$. Choose $(w^0, \beta^0, z^0) \in \mathbb{R}^n \times \mathbb{R}^p \times \mathbb{R}^n$. Here $\beta^0$ denotes an algorithmic initial point, distinct from the true regression coefficient used in the statistical analysis; $\sigma_k$ is an algorithmic penalty parameter, distinct from the noise level $\sigma$. Set $k=0$. Repeat the following steps until a proper stopping criterion is satisfied by the sequence $\{(w^k, \beta^k, z^k)\}$:\\
		\noindent{\textbf{Step 1.}} Approximately solve the following subproblem via Sub-Algorithm SSN listed later:
		\begin{equation}\label{sub_problem}
			w^{k+1} \approx \underset{w \in \mathbb{R}^n}{\operatorname{argmin}} \left\{ \psi_k(w) := {{L}}_{\sigma_k}(w; \beta^k, z^k) + \frac{\tau_k}{2\sigma_k}\|w - w^k\|_2^2 \right\}.
		\end{equation}
		\noindent{\textbf{Step 2.}} Update the multipliers via the following rule:
		\begin{equation}\label{eq:nalm-multiplier-updates}
		(\beta^{k+1}, z^{k+1}) = \left( \operatorname{Prox}_{\sigma_k p(\cdot)}\Big(\beta^k-\sigma_k X^{\top}w^{k+1}\Big), \quad \operatorname{Prox}_{\frac{\sigma_k}{\sqrt{n}} \|\cdot\|_{2}}\Big(z^k+\sigma_k w^{k+1}\Big) \right).
		\end{equation}
	\end{framed}	
	
	Before proceeding to the convergence analysis, we specify the approximation accuracy required in \textbf{Step 1} of Algorithm~\ref{Algorithm 4.1}. For this purpose, define the positive definite block diagonal matrix
	\begin{equation}\label{block diagonal matrix}
		\mathcal{M}_k := \operatorname{Diag}(\tau_k I_n, I_p, I_n) \succ 0.
	\end{equation}
	Following \cite{li2020asymptotically}, we employ the following implementable stopping criteria for the subproblem \eqref{sub_problem}, which provide a practical realization of the inexact augmented Lagrangian framework developed in \cite{rockafellar1976augmented}:
	\begin{align}
		\|\nabla \psi_k(w^{k+1})\|_2
		&\le \frac{\min(\sqrt{\tau_k},1)}{\sigma_k}\varepsilon_k,\tag{A}\label{eq:nalm-stop-a}\\
		\|\nabla \psi_k(w^{k+1})\|_2
		&\le
		\frac{\delta_k\min(\sqrt{\tau_k},1)}{\sigma_k}
		\|(w^{k+1},\beta^{k+1},z^{k+1})
		-(w^k,\beta^k,z^k)\|_{\mathcal{M}_k},\tag{B}\label{eq:nalm-stop-b}
	\end{align}
	where $\{\varepsilon_k\}_{k\ge0}$ and $\{\delta_k\}_{k\ge0}$ are positive summable sequences satisfying $\delta_k<1$ for all $k$.
	
	\subsection{A semismooth Newton Method for Subproblem \eqref{sub_problem}}
	Noting that $\psi_k(\cdot)$ in \eqref{sub_problem} is convex and continuously differentiable, it is easy to show that its gradient takes the following compact form
	\begin{align}\label{sub_equ_problem}
		\nabla \psi_k(w) &= - X\operatorname{Prox}_{\sigma_k p(\cdot)}\big(\beta^k-\sigma_k X^{\top}w\big) + \operatorname{Prox}_{\frac{\sigma_k}{\sqrt{n}} \|\cdot\|_{2}}\big(z^k+\sigma_k w\big) + y + \frac{\tau_k}{\sigma_k}\big(w - w^k\big) \nonumber\\[2mm]
		& = - \sigma_kX\operatorname{Prox}_{p(\cdot)}\big(\beta^k/\sigma_k- X^{\top}w\big) + \sigma_k\operatorname{Prox}_{\frac{1}{\sqrt{n}}\|\cdot\|_{2}}\big(z^k/\sigma_k+ w\big) + y + \frac{\tau_k}{\sigma_k}\big(w - w^k\big).
	\end{align}
	The second equality follows from the fact that  $p(\cdot)$ and $\|\cdot\|_2$ are positively homogeneous convex functions of degree one. Consequently, $\operatorname{Prox}_{\sigma_k p(\cdot)}(x) = \sigma_k \operatorname{Prox}_{p(\cdot)}(x/\sigma_k)$ and 
	$\operatorname{Prox}_{\frac{\sigma_k}{\sqrt{n}}\|\cdot\|_{2}}(x) = \sigma_k \operatorname{Prox}_{\frac{1}{\sqrt{n}}\|\cdot\|_{2}}(x/\sigma_k)$ for inputs in $\mathbb R^p$ and $\mathbb R^n$, respectively. 
	Therefore, solving \eqref{sub_problem} is equivalent to finding a solution of the following nonlinear system of equations
	\begin{equation}\label{nabla}
		\nabla \psi_k(w)=0,
	\end{equation}
   which is  semismooth due to the semismoothness of the proximal mappings associated with $p(\cdot)$ and the $\ell_2$-norm $\|\cdot\|_2$, as established in \cite[Remark 2.12]{lin2019efficient} and \cite[Lemma 1]{yuan2018efficient}, respectively. Therefore, it is reasonable to employ a semismooth Newton method to solve \eqref{nabla}, or equivalently, \eqref{sub_problem}.

	Consider the following set-valued mapping $\hat{\partial}^2\psi_k: \mathbb{R}^n \rightrightarrows \mathbb{R}^{n\times n}$:
	$$
	\hat{\partial}^2\psi_k(w)
	:=
	\left\{\sigma_k XUX^\top+\sigma_k V+\frac{\tau_k}{\sigma_k}I_n\right\},
	$$
	where
	$$
	U\in\partial\operatorname{Prox}_{p(\cdot)}
	\left(\frac{\beta^k}{\sigma_k}-X^\top w\right),
	\qquad
	V\in\partial\operatorname{Prox}_{\frac{1}{\sqrt n}\|\cdot\|_2}
	\left(\frac{z^k}{\sigma_k}+w\right),
	$$
	and $w\in\mathbb{R}^n$. Here, $\partial\operatorname{Prox}_{p(\cdot)}(\cdot)$ and $\partial\operatorname{Prox}_{\frac{1}{\sqrt n}\|\cdot\|_2}(\cdot)$ denote the Clarke generalized Jacobians of the corresponding proximal mappings. The former is distinguished from the computable generalized derivative mapping \eqref{M}; the latter is given in \eqref{l2clarke}.
	It follows from \cite{hiriart1984generalized} that
	$$
	\partial^2\psi_k(w)\subseteq\hat{\partial}^2\psi_k(w),
	$$
	where $\partial^2\psi_k(w):=\partial(\nabla\psi_k)(w)$ denotes the Clarke generalized Hessian of $\psi_k$.
	The Clarke generalized Jacobians of the proximal mappings are nonempty, compact, and upper semicontinuous; the same properties therefore hold for $\hat{\partial}^2\psi_k$. The inclusion above is an outer approximation and need not be an equality. Moreover, since $U$ and $V$ are positive semidefinite as reviewed in Section \ref{preli} and that $\tau_k/\sigma_k>0$, every element $H\in\hat{\partial}^2\psi_k(w)$ is actually symmetric positive definite.
	For computation, we use the generalized Newton derivative mapping
	\[
	\mathcal H_k(w):=\left\{\sigma_k XUX^\top+\sigma_k V+\frac{\tau_k}{\sigma_k}I_n:
	\ U\in\mathcal M(\beta^k/\sigma_k-X^\top w),\quad
	V\in\partial\operatorname{Prox}_{\|\cdot\|_2/\sqrt n}(z^k/\sigma_k+w)\right\}.
	\]
	The semismoothness properties in \eqref{prox_semismooth} and \eqref{l2clarke} imply that $\nabla\psi_k$ is strongly semismooth with respect to $\mathcal H_k$. This mapping is nonempty, compact, and upper semicontinuous, and all its elements are symmetric positive definite. It need not coincide with the Clarke generalized Hessian.
	When applying the semismooth Newton method to solve \eqref{nabla}, starting from $w^{k,0}$, we generate a sequence $\{w^{k,i}\}$ according to
	$$
	w^{k,i+1}:=w^{k,i}+\alpha_i\Delta w^{i},
	\qquad
	\text{where}\qquad
	H^{i}\Delta w^{i}+\nabla\psi_k(w^{k,i})=0,
	$$
	where $H^i\in\mathcal H_k(w^{k,i})$, $\alpha_i$ is a step length, and $\Delta w^i$ is the Newton direction.
	
	In light of the above analysis, we present the framework of the semismooth Newton method (SSN) for solving the subproblem \eqref{sub_problem} as follows.

	\begin{framed}
		\noindent
		\textbf{Sub-Algorithm 5.2: SSN} \label{Algorithm 4.2}
		\vskip 3.0mm \hrule \vskip 3mm
		\noindent{\textbf{Initialization.}} Given $\sigma_k>0$ and $\tau_k>0$. Choose $\mu\in (0,1/2)$, $\bar{\eta}\in(0,1)$, $\bar{\tau}\in(0,1]$, $\delta\in(0,1)$, and $w^{k,0}\in\mathbb{R}^n$. Let $i:=0$. Do the following steps iteratively:\\
		\noindent{\textbf{Step 1.}} Select $U^{i}\in\mathcal M(\beta^k/\sigma_k-X^{\top}w^{k,i})$ and $V^{i}\in\partial\text{Prox}_{\frac{1}{\sqrt{n}} \|\cdot\|_{2}}(z^k/\sigma_k+ w^{k,i})$, and set ${H}^{i}:=\sigma_k XU^{i}X^{\top}+\sigma_k V^{i}+\tau_k \sigma_k^{-1} I_{n}$. Solve the linear system \eqref{linear equation} directly or by the conjugate gradient (CG) method to obtain $\Delta w^i$ satisfying  
		\begin{equation}\label{linear equation}
		{H}^{i}\Delta w^{i}=-\nabla \psi_k(w^{k,i})
		\end{equation}
		such that
		$$
		\| {H}^{i}\Delta w^{i}+\nabla \psi_k(w^{k,i})\|_2\leq \min \{\bar{\eta}, \|\nabla \psi_k(w^{k,i})\|_2^{1+\bar{\tau}}\}.
		$$
		\noindent{\textbf{Step 2.}} Set $\alpha_i:=\delta^{m_i}$, where $m_i$ is the first nonnegative integer $m$ such that
		$$
		\psi_k (w^{k,i}+\delta^{m}\Delta w^{i})\leq \psi_k (w^{k,i})+\mu \delta^{m}\big\langle \nabla \psi_k (w^{k,i}), \Delta w^{i}\big\rangle.
		$$
		\noindent{\textbf{Step 3.}} Set $w^{k,i+1}:=w^{k,i}+\alpha_i\Delta w^{i}$. Let $i:=i+1$ and go to \textbf{Step 1}.
	\end{framed}
	
	We see that the main computational cost of Sub-Algorithm SSN lies in solving the Newton linear system \eqref{linear equation}, which takes the form
	\begin{equation}\label{linear equation_equ form}
		\left(
		\sigma_k XU^iX^\top
		+\sigma_k V^i
		+\frac{\tau_k}{\sigma_k}I_n
		\right)\Delta w^i
		=
		-\nabla\psi_k(w^{k,i}),
	\end{equation}
	where
	$$
	U^i\in\mathcal M
	\left(\frac{\beta^k}{\sigma_k}-X^\top w^{k,i}\right),
	\qquad
	V^i\in\partial\operatorname{Prox}_{\frac{1}{\sqrt n}\|\cdot\|_2}
	\left(\frac{z^k}{\sigma_k}+w^{k,i}\right).
	$$
	To solve \eqref{linear equation_equ form} efficiently, we exploit the special structures of the matrices $U^i$ and $V^i$. As shown in \cite{lin2019efficient}, the matrix $XU^iX^\top$ admits the decomposition
	\begin{equation}\label{XUX_structure}
		XU^iX^\top	=X_\gamma X_\gamma^\top	+X_\alpha \widetilde U^i (\widetilde U^i)^\top X_\alpha^\top,
	\end{equation}
	where $X_\alpha$ and $X_\gamma$ are the submatrices of $X$ corresponding to the index sets $\alpha$ and $\gamma$, respectively, and $\tilde U^i$ is defined as in \cite{lin2019efficient}.
	
	We conclude this subsection by stating the convergence result of SSN, which is well established in the literature; see, e.g., \citep[Proposition~3.3 and Theorem~3.4]{zhao2010newton}.
	\begin{theorem}\label{globc}
		Let $\{w^{k,i}\}_{i\geq 0}$ be the sequence generated by Sub-Algorithm SSN. Then, $\{w^{k,i}\}$ converges to the unique optimal solution ${w}^{k+1}$ of Subproblem SSN. Moreover, it holds that
		$$
		\|w^{k,i+1}-{w}^{k+1}\|_2=	O\left(\|w^{k,i}-{w}^{k+1}\|_2^{1+\bar{\tau}}\right),
		$$
		for all sufficiently large $i$, where $\bar{\tau}\in(0,1]$ is the constant used in Sub-Algorithm SSN.
	\end{theorem}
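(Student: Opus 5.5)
The plan is to treat the subproblem \eqref{sub_problem} as the unconstrained minimization of a strongly convex, continuously differentiable function whose gradient is strongly semismooth with respect to $\mathcal H_k$. From there I would follow the standard globalized semismooth Newton analysis of \citep[Proposition~3.3 and Theorem~3.4]{zhao2010newton}.

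\emph{Well-posedness and existence of a solution.} Since $\psi_k$ is the sum of convex terms (negatives of Moreau envelopes plus quadratics, which are convex in $w$ because $L_{\sigma}(\cdot;\beta,z)$ is a partial supremum of affine functions of $w$) and $\frac{\tau_k}{2\sigma_k}\|w-w^k\|_2^2$, it is $\frac{\tau_k}{\sigma_k}$-strongly convex. Hence its level sets are bounded and it has a unique minimizer $w^{k+1}$, which is the unique solution of \eqref{nabla}. Every $H\in\mathcal H_k(w)$ satisfies $H\succeq \frac{\tau_k}{\sigma_k}I_n$, because $U\in\mathcal M(\cdot)$ and $V$ are positive semidefinite. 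Moreover $\|H\|_2\le \sigma_k\|X\|_2^2+\sigma_k+\tau_k/\sigma_k$, since $U$ and $I-U$ are both positive semidefinite, and similarly for $V$. So the Newton systems are uniformly well conditioned.

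\emph{Global convergence.} The inexact direction satisfies $\|H^i\Delta w^i+\nabla\psi_k(w^{k,i})\|\le\bar\eta\|\nabla\psi_k\|$ when the gradient is small; in general one uses the residual bound. Combined with the uniform eigenvalue bounds, this gives $\langle\nabla\psi_k(w^{k,i}),\Delta w^i\rangle\le -c\|\nabla\psi_k(w^{k,i})\|^2$ for some $c>0$ once $\|\nabla\psi_k\|$ is small. If the gradient is not small, the quantity $\min\{\bar\eta,\|\nabla\psi_k\|^{1+\bar\tau}\}$ still yields descent after a short calculation, possibly with the safeguard of requiring $\bar\eta$ small relative to the condition number. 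I expect this to be the main technical nuisance, and I would handle it exactly as in \cite{zhao2010newton}. Since $\nabla\psi_k$ is Lipschitz, the Armijo backtracking terminates with step sizes bounded below. The standard descent argument then gives $\sum_i\|\nabla\psi_k(w^{k,i})\|^2<\infty$, hence $\nabla\psi_k(w^{k,i})\to0$. Strong convexity then gives $w^{k,i}\to w^{k+1}$.

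\emph{Local rate.} Near $w^{k+1}$, strong semismoothness of $\nabla\psi_k$ with respect to $\mathcal H_k$ gives $\nabla\psi_k(w^{k,i})-\nabla\psi_k(w^{k+1})-H^i(w^{k,i}-w^{k+1})=O(\|w^{k,i}-w^{k+1}\|^2)$. This follows from \eqref{prox_semismooth}, where the remainder is even exactly zero, and from the strong semismoothness of $\operatorname{Prox}_{\|\cdot\|_2/\sqrt n}$. Combining this with $\|(H^i)^{-1}\|\le\sigma_k/\tau_k$ and the residual tolerance $\|\nabla\psi_k\|^{1+\bar\tau}=O(\|w^{k,i}-w^{k+1}\|^{1+\bar\tau})$ gives $\|w^{k,i}+\Delta w^i-w^{k+1}\|=O(\|w^{k,i}-w^{k+1}\|^{1+\bar\tau})$.

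It remains to verify that the unit step is eventually accepted, for which $\mu<1/2$ is used. The standard argument expands $\psi_k$ to second order along $\Delta w^i$ using the semismooth approximation of $\nabla\psi_k$, as in \cite{zhao2010newton}. With $\alpha_i=1$ eventually, the stated $(1+\bar\tau)$-order convergence follows. The acceptance of the unit step is the delicate point, since $\psi_k$ is only $SC^1$. I would verify it via the inequality $\psi_k(w+d)-\psi_k(w)-\tfrac12\langle\nabla\psi_k(w)+\nabla\psi_k(w+d),d\rangle=o(\|d\|^2)$ together with the uniform positive definiteness of $H^i$.
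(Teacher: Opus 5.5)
Your route is the paper's own. The paper gives no argument beyond invoking \citep[Proposition~3.3 and Theorem~3.4]{zhao2010newton}, and your outline reconstructs that argument: strong convexity of $\psi_k$ with modulus $\tau_k/\sigma_k$, uniform spectral bounds on $H^i$, Armijo-based global convergence, the $(1+\bar\tau)$-order local estimate from \eqref{prox_semismooth}, strong semismoothness of the $\ell_2$ proximal map together with the residual tolerance, and eventual acceptance of the unit step for $\mu<1/2$. The local part is fine as written.

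The one step that fails as you state it is descent when $g:=\nabla\psi_k(w^{k,i})$ is not small. The tolerance $\|H^i\Delta w^i+g\|_2\le\min\{\bar\eta,\|g\|_2^{1+\bar\tau}\}$ does not by itself force $\langle g,\Delta w^i\rangle<0$. Take $H=\operatorname{Diag}(1,M)$, $g=(\bar\eta/2,1)^\top$, and $r$ of norm $\bar\eta$ parallel to $H^{-1}g$. Then $d=H^{-1}(r-g)$ meets the tolerance, yet $\langle g,d\rangle=\bar\eta\|H^{-1}g\|_2-g^\top H^{-1}g\ge\bar\eta^2/4-1/M>0$ once $M>4/\bar\eta^2$. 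A safeguard tying $\bar\eta$ to $\kappa(H^i)$ would change the algorithm. Since $\kappa(H^i)$ may be as large as order $\sigma_k^2\|X\|_2^2/\tau_k$ here, it would also be practically vacuous.

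The cited Proposition~3.3 gets descent from the \emph{solver}, not from the tolerance: Step~1 produces $\Delta w^i$ by a direct solve or by CG started at $0$.
\begin{itemize}
\item A CG iterate $d$ lies in a Krylov space $\mathcal K_j(H^i,g)\ni g$ and satisfies $H^id+g\perp\mathcal K_j(H^i,g)$. Hence $\langle g,d\rangle=-\langle d,H^id\rangle$ and $\langle g,H^id\rangle=-\|g\|_2^2$.
\item Cauchy--Schwarz in the $H^i$-inner product then gives $\langle g,d\rangle\le-\|g\|_2^2/\lambda_{\max}(H^i)$.
\item The chain $\lambda_{\min}(H^i)\|d\|_2^2\le\langle d,H^id\rangle\le\|g\|_2\|d\|_2$ gives $\|d\|_2\le(\sigma_k/\tau_k)\|g\|_2$.
\end{itemize}
Both bounds hold at every CG stopping index. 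They are exactly what your Armijo argument needs, for all gradient sizes and with no condition on $\bar\eta$. The tolerance is then used only in the local rate estimate. With this replacement your proof goes through.
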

	
\subsection{Convergence Analysis of N-ALM}\label{convana}

In this section, we analyze the local convergence rate of Algorithm~\ref{Algorithm 4.1} using the proximal point framework of \cite{li2020asymptotically}. Their convergence theory provides a general approach to the rate analysis, but its application requires an error bound for the associated KKT mapping. Establishing this bound for the square-root clustered Lasso model is the main task of this section. Although the penalty $p$ is polyhedral convex, the subdifferential of the Euclidean norm is generally not a polyhedral multifunction. Thus, the error bound for the full KKT system does not follow directly from the polyhedrality of $p$.

The results of \cite{zhou2017unified} provide useful error-bound properties for the individual subdifferential mappings. However, these properties alone do not establish an error bound for the KKT system, where the primal and dual variables are coupled through $X$. We therefore first establish a local error bound for the primal subdifferential and then use the KKT conditions to obtain an error bound for the full system under the conditions stated below. This supplies the model-specific condition needed to apply the local rate analysis of \cite{li2020asymptotically}.

Let $F$ denote the objective function in \eqref{square_root_clustered_lasso_model}, and write
\begin{equation}\label{eq:primal-objective-solutions}
	F(\beta):=\frac1{\sqrt n}\|X\beta-y\|_2+p(\beta),\qquad
	F^*:=\min_\beta F(\beta),\qquad
	S_{\mathrm P}:=\operatorname*{arg\,min}_\beta F(\beta).
\end{equation}
Since $F$ is continuous and convex and $F(\beta)\ge\lambda_1\|\beta\|_1$ with $\lambda_1>0$, $S_{\mathrm P}$ is nonempty, compact, and convex. To relate primal optimality to the full KKT system, write $u:=(w,\beta,z)$ and define its three component mappings by
\begin{equation}\label{eq:kkt-operator-explicit}
	\mathcal T_X(u):=\{z-X\beta+y\},\qquad
	\mathcal T_p(u):=X^\top w+\partial p(\beta),\qquad
	\mathcal T_2(u):=\tfrac1{\sqrt n}\partial\|z\|_2-w.
\end{equation}
The KKT mapping and its solution set are
\begin{equation}\label{eq:kkt-mapping}
	\mathcal T(u):=\mathcal T_X(u)\times\mathcal T_p(u)\times\mathcal T_2(u),
	\qquad S_{\mathrm{KKT}}:=\mathcal T^{-1}(0).
\end{equation}
By convexity and the subdifferential sum rule, the primal solution set satisfies
\begin{equation}\label{eq:primal-kkt-equivalence}
	S_{\mathrm P}=\{\beta:\text{there exists }w\in\mathbb R^n\text{ such that }(w,\beta,X\beta-y)\in S_{\mathrm{KKT}}\}.
\end{equation}
In particular, $S_{\mathrm{KKT}}$ is nonempty. As the saddle-point operator associated with the closed convex--concave function $L$ in \eqref{lagfun}, $\mathcal T$ is maximal monotone; see \cite{rockafellar1976augmented}.

Suppose that $0<\sigma_k\uparrow\sigma_\infty\le+\infty$ and $\tau_k\downarrow\tau_\infty>0$. Then the matrices in \eqref{block diagonal matrix} satisfy
\begin{equation}\label{eq:palm-metric-bounds}
	\underline m I\preceq\mathcal M_{k+1}\preceq\mathcal M_k\preceq\gamma_0 I,
	\qquad \underline m:=\min\{1,\tau_\infty\}>0,\quad\gamma_0:=\max\{1,\tau_0\}.
\end{equation}
To apply the proximal point convergence results, write $u^k:=(w^k,\beta^k,z^k)$ and let $\mathcal P_k:=(\mathcal M_k+\sigma_k\mathcal T)^{-1}\mathcal M_k$ and $\widetilde u^{k+1}:=\mathcal P_k(u^k)$ denote the exact proximal mapping and update. The multiplier updates \eqref{eq:nalm-multiplier-updates} and criterion \eqref{eq:nalm-stop-a} give
\begin{equation}\label{eq:palm-absolute-proximal-error}
	E_k:=\|u^{k+1}-\widetilde u^{k+1}\|_{\mathcal M_k}
	\le\frac{\sigma_k}{\sqrt{\tau_k}}\|\nabla\psi_k(w^{k+1})\|_2
	\le\varepsilon_k,
\end{equation}
by the argument of \cite[Proposition~3.2]{li2020asymptotically}. Thus, \eqref{eq:palm-metric-bounds}, $\sigma_k\ge\sigma_0>0$, and $\sum_k\varepsilon_k<\infty$ allow us to apply \cite[Theorem~2.3]{li2020asymptotically} with metric-change sequence $\nu_k=0$. The sequence $\{u^k\}$ converges to some $\bar u=(\bar w,\bar\beta,\bar z)\in S_{\mathrm{KKT}}$, where $(\bar\beta,\bar z)$ and $\bar w$ solve \eqref{p_problem} and \eqref{d_problem}, respectively.

To obtain the local rate, we establish an error bound for $\mathcal T$ at $\bar u$ and apply the rate argument of \cite[Theorem~2.5]{li2020asymptotically}. We first derive an error bound for $\partial F$, then use the KKT conditions to extend it to $\mathcal T$ under the conditions stated in Theorem~\ref{thm:kkt-eb}.

\begin{lemma}
	\label{lem:primal-eb}
	For every $\bar\beta\in S_{\mathrm P}$, there exist $r>0$ and $\kappa_{\mathrm P}>0$ such that
	\begin{equation}\label{eq:primal-subdiff-eb}
		\operatorname{dist}(\beta,S_{\mathrm P})\le\kappa_{\mathrm P}\operatorname{dist}\bigl(0,\partial F(\beta)\bigr)
		\qquad\text{whenever }\|\beta-\bar\beta\|_2\le r.
	\end{equation}
\end{lemma}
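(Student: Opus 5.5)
We will derive the bound from the structured-convex error-bound theory of \cite{zhou2017unified}. Write $F=h\circ\mathcal A+g$, where $\mathcal A\beta:=X\beta-y$, $h:=\tfrac1{\sqrt n}\|\cdot\|_2$ and $g:=p$ is polyhedral. A Lipschitz gradient for $h$ is not available, since $h$ is nonsmooth at $0$. We therefore split into two cases according to whether the optimal residual vanishes.

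\emph{Case 1: $X\bar\beta\neq y$.} The residual $\mathcal A\beta$ is invariant over $S_{\mathrm P}$. Indeed, if $\beta_1,\beta_2\in S_{\mathrm P}$ had different residuals, strict convexity of $\|\cdot\|_2$ along non-collinear directions would give a contradiction. In the collinear case, $\|\cdot\|_2$ is affine along the segment, so we must handle it separately. Let $\bar r$ denote the common residual on a neighborhood. Near $\bar\beta$ we have $\|X\beta-y\|_2\ge\tfrac12\|\bar r\|_2>0$, so $h$ is smooth and strongly convex transversally to the ray there. One then verifies the $\mathcal A$-invariance and the structure required in \cite{zhou2017unified}, with the smooth part locally satisfying their assumptions on a neighborhood. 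By \cite[Theorem/Corollary on polyhedral $g$]{zhou2017unified}, this yields the local error bound $\operatorname{dist}(\beta,S_{\mathrm P})\le\kappa\|\beta-\operatorname{Prox}_{p}(\beta-\nabla(h\circ\mathcal A)(\beta))\|$. The standard equivalence between the proximal-residual bound and the subdifferential bound (using local Lipschitz continuity of $\nabla(h\circ\mathcal A)$ and $\|R(\beta)\|\le\operatorname{dist}(0,\partial F(\beta))$) then gives \eqref{eq:primal-subdiff-eb}. The main obstacle here is that $h$ is not strongly convex along the residual direction, because $\|\cdot\|_2$ is homogeneous. To repair this, I would first show that $S_{\mathrm P}$ is contained in the affine slice $\{\mathcal A\beta=\bar r\}$ intersected with a polyhedron. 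The idea is that along the ray direction $F$ restricted to the optimal face is piecewise linear. One can then invoke Hoffman's lemma on the polyhedral part together with quadratic growth of $h$ in the orthogonal complement of $\bar r$.

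\emph{Case 2: $X\bar\beta=y$.} Here the argument is easier, since everything is polyhedral around the minimizer set intersected with $\{X\beta=y\}$. Because $\|\cdot\|_2\ge\langle \bar w\sqrt n,\cdot\rangle$-type linear minorants hold, I would prove quadratic (indeed linear) growth: $F(\beta)-F^*\ge c\,\operatorname{dist}(\beta,S_{\mathrm P})^2$ near $\bar\beta$. For this, choose a dual solution $\bar w$ with $\|\bar w\|\le1/\sqrt n$ and $-X^\top\bar w\in\partial p(\beta)$ for all $\beta\in S_{\mathrm P}$. Then $F(\beta)-F^*\ge [p(\beta)-p(\bar\beta)+\langle X^\top\bar w,\beta-\bar\beta\rangle]+[\tfrac1{\sqrt n}\|X\beta-y\|-\langle\bar w,X\beta-y\rangle]$. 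The first bracket is a polyhedral function vanishing exactly on a polyhedral set. The second bracket is at least $(\tfrac1{\sqrt n}-\|\bar w\|)\|X\beta-y\|$; if $\|\bar w\|=1/\sqrt n$ we again need care, and would use a relative-interior dual choice. Hoffman's lemma then gives linear growth with respect to the intersection, which equals $S_{\mathrm P}$.

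Finally, in both cases growth converts into the subdifferential error bound by the classical equivalence for convex functions (Drusvyatskiy--Lewis / Artacho--Geoffroy): quadratic growth near $\bar\beta$ is equivalent to metric subregularity of $\partial F$ at $(\bar\beta,0)$, which is exactly \eqref{eq:primal-subdiff-eb}. I therefore expect to phrase Case 1 also as quadratic growth, which unifies the argument. The hardest step is establishing growth in Case 1 along the direction in which the Euclidean loss is only affine.
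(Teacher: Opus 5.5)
There is a genuine gap, and it is exactly at what you called the main obstacle. In Case~1 you assert that $X\beta-y$ is invariant over $S_{\mathrm P}$ and plan to show $S_{\mathrm P}\subseteq\{\mathcal A\beta=\bar r\}\cap(\text{polyhedron})$. This is false, and the collinear situation you set aside is the generic one, not an exception. If $\bar z:=X\bar\beta-y\neq0$, the KKT conditions force $\bar w=\bar z/(\sqrt n\|\bar z\|_2)$. Hence $\|\bar w\|_2=1/\sqrt n$, and the loss is affine along the whole ray $\mathbb R_+\bar w$, so primal solutions share only this ray, not the residual itself.

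Concretely, take $n=p=1$, $X=1$, $y=1$, $\lambda_1=1$. Then $F(\beta)=|\beta-1|+|\beta|$ and $S_{\mathrm P}=[0,1]$, with residuals filling $[-1,0]$. At $\bar\beta=1/2$ (your Case~1), no affine slice contains $S_{\mathrm P}$, not even locally. Relatedly, \cite{zhou2017unified} cannot be invoked as stated: their framework requires the smooth part $h$ to be strongly convex on compact sets, which is what makes $\mathcal A\beta$ invariant there, and $\|\cdot\|_2$ is linear along rays.

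The same example at $\bar\beta=1$ breaks Case~2. The dual solution is unique, $\bar w=-1$, with $\|\bar w\|_2=1/\sqrt n$, so no ``relative-interior'' dual choice with $\|\bar w\|_2<1/\sqrt n$ exists. Moreover, the zero set $\mathcal C_p:=\{\beta:-X^\top\bar w\in\partial p(\beta)\}=[0,\infty)$ of your first bracket meets $\{X\beta=y\}$ only in $\{1\}\neq S_{\mathrm P}$. So the claimed linear growth toward that intersection is false, since $F$ is flat on $[0,1]$.

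The missing idea is to replace the affine slice by a polyhedral \emph{ray} constraint; after that, no case split on $\bar z$ is needed. Fix one dual solution $\bar w$. Your Case~2 identity $F(\beta)-F^*=[p(\beta)+\langle X^\top\bar w,\beta\rangle]+[\tfrac1{\sqrt n}\|X\beta-y\|_2-\langle\bar w,X\beta-y\rangle]$ holds in every case, and both brackets are nonnegative. It yields $S_{\mathrm P}=\{\beta\in\mathcal C_p:X\beta-y\in\mathcal C_2\}$, where $\mathcal C_2=\{0\}$ if $\|\bar w\|_2<1/\sqrt n$ and $\mathcal C_2=\{t\bar w:t\ge0\}$ if $\|\bar w\|_2=1/\sqrt n$; both are polyhedral.

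In the ray case the loss bracket has only local quadratic (not linear) growth to $\mathcal C_2$. With the unit vector $u:=\sqrt n\,\bar w$, one checks $\|z\|_2-\langle u,z\rangle\ge\operatorname{dist}(z,\mathbb R_+u)^2/(2\|z\|_2)$ for $z\neq0$. This is the correct form of your ``transversal strong convexity.'' Combine it with three further facts:
the linear (hence locally quadratic) growth of the polyhedral first bracket to $\mathcal C_p$;
bounded linear regularity (Hoffman-type) of the polyhedral pair $\{(\beta,z):z=X\beta-y\}$ and $\mathcal C_p\times\mathcal C_2$;
local boundedness of $\|X\beta-y\|_2$.
Together these give $F(\beta)-F^*\ge\kappa^{-1}\operatorname{dist}(\beta,S_{\mathrm P})^2$ near $\bar\beta$. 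Your final conversion via convexity then finishes the proof. This is the route the paper takes.
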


\begin{proof}
	Fix $\bar\beta\in S_{\mathrm P}$ and set $\bar z:=X\bar\beta-y$. By \eqref{eq:primal-kkt-equivalence}, there exists $\bar w$ such that
	\[
	-X^\top\bar w\in\partial p(\bar\beta),\qquad
	\bar w\in\tfrac1{\sqrt n}\partial\|\bar z\|_2\subseteq B_2^{(1/\sqrt n)}.
	\]
	Since $p$ and $\|\cdot\|_2$ are convex and positively homogeneous, these optimality conditions imply
	\[
	p(\beta)\ge-\langle X^\top\bar w,\beta\rangle,
	\qquad \tfrac1{\sqrt n}\|z\|_2\ge\langle\bar w,z\rangle
	\]
	for all $\beta,z$, with equality at $\bar\beta,\bar z$, respectively. Taking $z=X\beta-y$ shows that $F(\beta)\ge-\langle\bar w,y\rangle$ for every $\beta$, with equality at $\bar\beta$. Hence $F^*=-\langle\bar w,y\rangle$ and
	\begin{equation}\label{eq:primal-gap-decomposition}
		F(\beta)-F^*=
		\bigl[p(\beta)+\langle X^\top\bar w,\beta\rangle\bigr]
		+\bigl[\tfrac1{\sqrt n}\|X\beta-y\|_2-\langle\bar w,X\beta-y\rangle\bigr].
	\end{equation}
	
	Define the affine set $\mathcal A$ and the set $\mathcal C$ by
	\[
	\mathcal A:=\{(\beta,z)\in\mathbb R^p\times\mathbb R^n:z=X\beta-y\},
	\qquad \mathcal C:=\mathcal C_p\times\mathcal C_2,
	\]
	where
	\[
	\mathcal C_p:=\{\beta:-X^\top\bar w\in\partial p(\beta)\},
	\qquad \mathcal C_2:=\{z:\bar w\in\tfrac1{\sqrt n}\partial\|z\|_2\}.
	\]
	By the definitions of $\mathcal C_p$ and $\mathcal C_2$, together with the nonnegativity of the two terms in \eqref{eq:primal-gap-decomposition}, $F(\beta)=F^*$ if and only if $\beta\in\mathcal C_p$ and $X\beta-y\in\mathcal C_2$. Therefore,
	\begin{equation}
		S_{\mathrm P} = \{\beta\in\mathcal C_p:X\beta-y\in\mathcal C_2\} 
		\qquad\text{and}\qquad 
		\mathcal A\cap\mathcal C 	= \{(\beta,X\beta-y):\beta\in S_{\mathrm P}\}.
		\label{eq:primal-solution-representation}
	\end{equation}
	By \eqref{eq:primal-solution-representation},
	\[
	\begin{aligned}
		\operatorname{dist}\bigl((\beta,X\beta-y),\mathcal A\cap\mathcal C\bigr)^2
		&=\inf_{\beta'\in S_{\mathrm P}}\bigl\{\|\beta-\beta'\|_2^2+\|X(\beta-\beta')\|_2^2\bigr\}\\
		&\ge\inf_{\beta'\in S_{\mathrm P}}\|\beta-\beta'\|_2^2
		=\operatorname{dist}(\beta,S_{\mathrm P})^2.
	\end{aligned}
	\]
	To bound the distance to $\mathcal A\cap\mathcal C$, we first estimate the distances to $\mathcal C_2$ and $\mathcal C_p$.

	Since $p^*$ is polyhedral convex, $(\partial p)^{-1}=\partial p^*$ is a polyhedral multifunction by \cite[Theorem~23.10]{Rock35}. Robinson's result on the upper Lipschitz continuity of polyhedral multifunctions \cite[Proposition~1]{robinson1981some} implies that $(\partial p)^{-1}$ is calm at $-X^\top\bar w$ for $\bar\beta$. By the inverse-calmness characterization in \cite[Theorem~3H.3]{dontchev2009implicit}, $\partial p$ is metrically subregular at $\bar\beta$ for $-X^\top\bar w$.  
	As $p$ is proper, lower semicontinuous, and convex, we can apply    \cite[Theorem~3.3]{aragon2008characterization}  to $\partial p $ at $(\bar\beta,-X^\top\bar w)$,  and  obtain a constant $\kappa_p>0$ and $r_p>0$ such that 
	\[
	p(\beta)\ge
	p(\bar\beta)-\langle X^\top\bar w,\beta-\bar\beta\rangle
	+ \frac{1}{\kappa_p} \operatorname{dist}(\beta,\mathcal C_p)^2,  \qquad  \|\beta-\bar \beta\|_2\le r_p. 
	\]
	Using $p(\bar\beta)=-\langle X^\top\bar w,\bar\beta\rangle$,  we obtain  
	\begin{equation}
		\operatorname{dist}(\beta,\mathcal C_p)^2
		\le  \kappa_p  \bigl[p(\beta)+\langle X^\top\bar w,\beta\rangle\bigr],
		\qquad  \|\beta-\bar \beta\|_2\le r_p.
		\label{eq:penalty-gap-growth}
	\end{equation}

	We next establish a local bound on the distance to $\mathcal C_2$. Applying \cite[Proposition~7]{zhou2017unified} to a single group with weight $1/\sqrt n$ and fixed vector $-\bar w$, we obtain
	\begin{equation}
		\mathcal C_2=
		\begin{cases}
			\{0\}, & \|\bar w\|_2<1/\sqrt n,\\
			\{t\sqrt n\,\bar w:t\ge0\}, & \|\bar w\|_2=1/\sqrt n,
		\end{cases}
		\label{eq:loss-zero-set}
	\end{equation}
	Thus $\mathcal C_2$ is polyhedral convex. By \cite[Proposition~8]{zhou2017unified}, $\partial\|\cdot\|_2$ is metrically subregular at $\bar z$ for $\sqrt n\,\bar w$. Positive scaling therefore implies that $\tfrac1{\sqrt n}\partial\|\cdot\|_2$ is metrically subregular at $\bar z$ for $\bar w$. 
	Since $\|\cdot\|_2/\sqrt n$ is proper, lower semicontinuous, and convex, \cite[Theorem~3.3]{aragon2008characterization} gives $\kappa_2>0$ and $r_2>0$
	such that
	\[
	\operatorname{dist}(z,\mathcal C_2)^2
	\le \kappa_2\bigl[\tfrac1{\sqrt n}\|z\|_2-\langle\bar w,z\rangle\bigr]
	\qquad\text{for $\| z-   \bar z\|_2\leq r_2$ },
	\]
	where we used $\|\bar z\|_2/\sqrt n=\langle\bar w,\bar z\rangle$.  
	Let $r :=\min\left\{r_p,\, r_2/\max\{1,\|X\|_2\}\right\}$,   we have $\|(X\beta-y)-(X\bar\beta-y)\|_2\leq r_2$  when $ \beta\in U:=\{\beta\in\mathbb R^p:\|\beta-\bar\beta\|_2\le r\}$ .
	Hence
	\begin{equation}
		\operatorname{dist}(X\beta-y,\mathcal C_2)^2
		\le
		\kappa_2\bigl[\tfrac1{\sqrt n}\|X\beta-y\|_2-\langle\bar w,X\beta-y\rangle\bigr], \qquad \beta\in U.
		\label{eq:loss-gap-growth}
	\end{equation}

	The affine set $\mathcal A$ and the product $\mathcal C=\mathcal C_p\times\mathcal C_2$ are both polyhedral convex sets, and their intersection contains $(\bar\beta,\bar z)$. Thus, \cite[Fact~3]{zhou2017unified} implies that $\{\mathcal A,\mathcal C\}$ is boundedly linearly regular. Since $(\beta,X\beta-y)\in\mathcal A$ and $U$ is bounded, there exists $H>0$ such that, for every $\beta\in U$,
	\[
	\begin{aligned}
		\operatorname{dist}(\beta,S_{\mathrm P})^2
		&\le
		\operatorname{dist}\bigl((\beta,X\beta-y),
		\mathcal A\cap\mathcal C\bigr)^2\\
		&\le
		H^2\operatorname{dist}\bigl((\beta,X\beta-y),
		\mathcal C\bigr)^2\\
		&=
		H^2\left(
		\operatorname{dist}(\beta,\mathcal C_p)^2+
		\operatorname{dist}(X\beta-y,\mathcal C_2)^2
		\right).
	\end{aligned}
	\]
	By \eqref{eq:loss-gap-growth}, \eqref{eq:penalty-gap-growth}, and \eqref{eq:primal-gap-decomposition},
	\[
	\begin{aligned}
		\operatorname{dist}(\beta,S_{\mathrm P})^2
		&\le H^2\Bigl(\kappa_p\bigl[p(\beta)+\langle X^\top\bar w,\beta\rangle\bigr]\\
		&\hspace{3em}+\kappa_2\bigl[\tfrac1{\sqrt n}\|X\beta-y\|_2-\langle\bar w,X\beta-y\rangle\bigr]\Bigr)\\
		&\le H^2\max\{\kappa_p,\kappa_2\}\bigl[F(\beta)-F^*\bigr].
	\end{aligned}
	\]
	Thus, setting $\kappa_P:=H^2\max\{\kappa_p,\kappa_2\}>0$, we have
	\begin{equation}\label{eq:primal-qg}
		F(\beta)-F^*\ge \kappa_P^{-1}\,\operatorname{dist}(\beta,S_{\mathrm P})^2
		\qquad(\beta\in U).
	\end{equation}
	
	Now fix $\beta\in U$ and let $\beta^\pi$ be its Euclidean projection onto the nonempty closed convex set $S_{\mathrm P}$. Since $F$ is convex and $F(\beta^\pi)=F^*$, for every $v\in\partial F(\beta)$ we have
	\[
	F(\beta)-F^*
	\le
	\langle v,\beta-\beta^\pi\rangle
	\le
	\|v\|_2\operatorname{dist}(\beta,S_{\mathrm P}).
	\]
	Together with \eqref{eq:primal-qg}, this implies
	\[
	\operatorname{dist}(\beta,S_{\mathrm P})
	\le \kappa_P\|v\|_2
	\]
	whenever $\beta\notin S_{\mathrm P}$.
	For $\beta\in S_{\mathrm P}$, the inequality also holds since $\operatorname{dist}(\beta,S_{\mathrm P})=0$. Taking the infimum over $v\in\partial F(\beta)$ therefore gives \eqref{eq:primal-subdiff-eb}.
\end{proof}

Lemma~\ref{lem:primal-eb} gives a local error bound for $\partial F$. To establish the local convergence rate of N-ALM, we need an error bound for the full KKT mapping $\mathcal T$. In the following theorem, we combine the primal error bound with the three KKT conditions to bound the distance to $S_{\mathrm{KKT}}$ in terms of $\operatorname{dist}(0,\mathcal T(u))$.
\begin{theorem}
	\label{thm:kkt-eb}
	Let $\bar u:=(\bar w,\bar\beta,\bar z)\in S_{\mathrm{KKT}}$.
	Suppose that either
	\[
	\bar z\ne0,
	\qquad\text{or}\qquad
	\bar z=0
	\quad\text{and}\quad
	\|\bar w\|_2<\frac{1}{\sqrt n}.
	\]
	Then there exist constants $\varepsilon>0$ and $\kappa>0$
	such that
	\begin{equation}
		\operatorname{dist}(u,S_{\mathrm{KKT}})
		\le
		\kappa\,\operatorname{dist}\bigl(0,\mathcal T(u)\bigr)
		\qquad
		\text{for }\|u-\bar u\|_2\le\varepsilon,
		\label{eq:kkt-local-eb}
	\end{equation}
	where $u=(w,\beta,z)$.
\end{theorem}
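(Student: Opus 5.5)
The plan is to split according to the two alternatives in the hypothesis. In each case the goal is to reduce the KKT error bound either to Lemma~\ref{lem:primal-eb} or to Robinson's calmness of polyhedral multifunctions.

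First, a preliminary reduction. Since $\bar u\in S_{\mathrm{KKT}}$, we have $\operatorname{dist}(u,S_{\mathrm{KKT}})\le\|u-\bar u\|_2\le\varepsilon$ for $\|u-\bar u\|_2\le\varepsilon$. Hence for any fixed $\delta>0$, the bound \eqref{eq:kkt-local-eb} holds trivially with $\kappa\ge\varepsilon/\delta$ whenever $\operatorname{dist}(0,\mathcal T(u))\ge\delta$. So it suffices to treat $u$ near $\bar u$ with a small residual. Fix such a $u=(w,\beta,z)$ and a residual $(r_1,r_2,r_3)\in\mathcal T(u)$ with $\|(r_1,r_2,r_3)\|_2$ equal, or arbitrarily close, to $\operatorname{dist}(0,\mathcal T(u))$. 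By \eqref{eq:kkt-operator-explicit} this means
\[
z-X\beta+y=r_1,\qquad r_2-X^\top w\in\partial p(\beta),\qquad w+r_3\in\tfrac1{\sqrt n}\partial\|z\|_2 .
\]
I will also use that $S_{\mathrm{KKT}}$ is the saddle-point set of $L$ in \eqref{lagfun}. It is therefore a product: the primal solutions $(\beta,X\beta-y)$ with $\beta\in S_{\mathrm P}$, times the dual solution set $S_{\mathrm D}$ of \eqref{d_problem}.

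\emph{Case $\bar z\ne0$.} Shrinking $\varepsilon$, I will have $z\ne0$ and $X\beta-y\ne0$. Then $\tfrac1{\sqrt n}\partial\|\cdot\|_2$ is single-valued and Lipschitz (constant $L_0$) near $\bar z$, with value $g(z')=z'/(\sqrt n\|z'\|_2)$, so $w=g(z)-r_3$. The point $\bar w=g(\bar z)$ is forced for every KKT point with primal part $(\bar\beta,\bar z)$, and so the product structure gives $S_{\mathrm D}=\{\bar w\}$. Moreover every $\beta'\in S_{\mathrm P}$ near $\bar\beta$ satisfies $g(X\beta'-y)=\bar w$.

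The key step is to exhibit a small element of $\partial F(\beta)$. Consider $v:=X^\top g(X\beta-y)+(r_2-X^\top w)\in\partial F(\beta)$. Using $X\beta-y=z-r_1$, this gives
\[
\|v\|_2\le\|X\|_2\bigl(L_0\|r_1\|_2+\|r_3\|_2\bigr)+\|r_2\|_2 .
\]
Lemma~\ref{lem:primal-eb} then yields $d:=\operatorname{dist}(\beta,S_{\mathrm P})\le\kappa_{\mathrm P}\|v\|_2$. Let $\beta^*$ be the projection of $\beta$ onto $S_{\mathrm P}$ and set $z^*:=X\beta^*-y$, so that $(\bar w,\beta^*,z^*)\in S_{\mathrm{KKT}}$. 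It remains to estimate the other two components:
\[
\|z-z^*\|_2\le\|r_1\|_2+\|X\|_2d,\qquad
\|w-\bar w\|_2\le\|r_3\|_2+L_0\|z-z^*\|_2 .
\]
Summing the three distances gives \eqref{eq:kkt-local-eb}.

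\emph{Case $\bar z=0$ and $\|\bar w\|_2<1/\sqrt n$.} For $u$ close to $\bar u$ and $\|r_3\|_2$ small, we have $\|w+r_3\|_2<1/\sqrt n$. The inclusion $w+r_3\in\tfrac1{\sqrt n}\partial\|z\|_2$ then forces $z=0$. Equivalently, locally the graph of $\tfrac1{\sqrt n}\partial\|\cdot\|_2$ coincides with $\{0\}\times\mathbb R^n$, which is a polyhedral set. Consequently, on a neighborhood of $(\bar u,0)$ the graph of $\mathcal T$ agrees with the graph of the multifunction
\[
u\mapsto\{z-X\beta+y\}\times\bigl(X^\top w+\partial p(\beta)\bigr)\times\bigl(N_{\{0\}}(z)-w\bigr).
\]
This multifunction is polyhedral, because $\partial p$ is polyhedral by \cite[Theorem~23.10]{Rock35} and all the other pieces are affine or polyhedral. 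By \cite[Proposition~1]{robinson1981some} its inverse is upper Lipschitz, hence calm at $0$ for $\bar u$. By \cite[Theorem~3H.3]{dontchev2009implicit} this is metric subregularity, i.e.\ \eqref{eq:kkt-local-eb}. One point to check is that local agreement of the graphs suffices. Here I would note that the solution sets also agree locally: every KKT point near $\bar u$ has $z=0$ and $\|w\|_2<1/\sqrt n$. Calmness is a local property, so it transfers, after shrinking $\varepsilon$ and using the preliminary reduction.

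I expect the main obstacle to be the first case. There one must control $w-\bar w$ even though the dual variable enters only through the nonpolyhedral term $\partial\|\cdot\|_2$. Pinning down $S_{\mathrm D}=\{\bar w\}$ through the product structure, and using the local Lipschitz continuity of $z\mapsto z/\|z\|_2$ away from the origin, is what makes this work. An attempt via quadratic growth alone only gives a square-root bound in $\|r_1\|_2$.
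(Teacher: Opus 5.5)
Your proposal is correct and follows essentially the same proof as the paper. In Case~1 you bound $\operatorname{dist}(0,\partial F(\beta))$ by the KKT residual using the Lipschitz continuity of $v\mapsto v/(\sqrt n\|v\|_2)$ near $\bar z$, apply Lemma~\ref{lem:primal-eb}, and rebuild a KKT point from the projection of $\beta$ onto $S_{\mathrm P}$; in Case~2 you replace $\mathcal T$ locally by a polyhedral multifunction and invoke Robinson's calmness, just as the paper does (the paper's $w^\pi=g(z^\pi)$ coincides with your $\bar w$, and your large-residual reduction replaces its appeal to \cite[Theorem~3H.3]{dontchev2009implicit}). One detail to write down, as the paper does: $\|\beta^*-\bar\beta\|_2\le\|\beta-\bar\beta\|_2$, which keeps $z^*$ in the region where $g$ is Lipschitz and equals $\bar w$.
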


\begin{proof}
	For $u=(w,\beta,z)$, define
	\begin{equation}\label{eq:kkt-component-residuals}
		r_i(u):=\operatorname{dist}(0,\mathcal T_i(u)),\quad i\in\{X,p,2\},
		\qquad \rho(u):=\operatorname{dist}(0,\mathcal T(u))=\sqrt{r_X(u)^2+r_p(u)^2+r_2(u)^2}.
	\end{equation}
	
	\medskip\noindent\textbf{\textit{Case 1}: $\bar z\ne0$.}
	The function
	\[
	g(v):=\frac{v}{\sqrt n\,\|v\|_2}
	\]
	is Lipschitz continuous on $V:=\{v:\|v-\bar z\|_2\le\|\bar z\|_2/2\}$ with Lipschitz constant $L_g>0$. Let $r_{\mathrm P}>0$ and $\kappa_{\mathrm P}>0$ be as in Lemma~\ref{lem:primal-eb}, and set
	\[
	\varepsilon_1:=\min\left\{r_{\mathrm P},\frac{\|\bar z\|_2}{2\max\{1,\|X\|_2\}}\right\}>0.
	\]
	For $\|u-\bar u\|_2\le\varepsilon_1$, we have
	\[
	\|z-\bar z\|_2\le\varepsilon_1\le\frac{\|\bar z\|_2}{2},\qquad
	\|(X\beta-y)-\bar z\|_2\le\|X\|_2\varepsilon_1\le\frac{\|\bar z\|_2}{2},
	\]
	so $z,X\beta-y\in V$. Hence
	\[
	\partial F(\beta)=\partial p(\beta)+X^\top g(X\beta-y),
	\qquad r_2(u)=\|g(z)-w\|_2.
	\]
	It follows that
	\begin{align}
		\operatorname{dist}(0,\partial F(\beta))
		&\le r_p(u)+\|X\|_2\|g(X\beta-y)-w\|_2\nonumber\\
		&\le r_p(u)+\|X\|_2\bigl(L_g r_X(u)+r_2(u)\bigr)\nonumber\\
		&\le C\rho(u),
		\label{eq:kkt-primal-residual-bound}
	\end{align}
	where $ C =\sqrt{1+(1+L_g^2)\|X\|_2^2}$.
	Let $\beta^\pi$ be the Euclidean projection of $\beta$ onto $S_{\mathrm P}$. Since $\|\beta-\bar\beta\|_2\le\varepsilon_1\le r_{\mathrm P}$, Lemma~\ref{lem:primal-eb} gives
	\begin{equation}\label{eq:kkt-set-primal-distance}
		\|\beta-\beta^\pi\|_2\le \kappa_{\mathrm P}C\rho(u).
	\end{equation}
	Since $S_{\mathrm P}$ is closed and convex and $\bar\beta\in S_{\mathrm P}$,
	\[
	\|\beta^\pi-\bar\beta\|_2\le\|\beta-\bar\beta\|_2.
	\]
	Set $z^\pi:=X\beta^\pi-y$. Then
	\[
	\|z^\pi-\bar z\|_2\le\|X\|_2\|\beta^\pi-\bar\beta\|_2\le\|X\|_2\varepsilon_1\le\frac{\|\bar z\|_2}{2},
	\]
	so $z^\pi\in V$. Set $w^\pi:=g(z^\pi)$. The optimality of $\beta^\pi$ gives
	\[
	0\in\partial p(\beta^\pi)+X^\top w^\pi,
	\qquad w^\pi\in\tfrac1{\sqrt n}\partial\|z^\pi\|_2,
	\]
	so $u^\pi:=(w^\pi,\beta^\pi,z^\pi)\in S_{\mathrm{KKT}}$. Moreover,
	\[
	\begin{aligned}
		\|z-z^\pi\|_2 &\le r_X(u)+\|X\|_2\|\beta-\beta^\pi\|_2  \le C_z\rho(u),\qquad C_z:=1+\|X\|_2\kappa_{\mathrm P}C,
		\\
		\|w-w^\pi\|_2 &\le r_2(u)+L_g\|z-z^\pi\|_2 \le C_w\rho(u),\qquad C_w:=1+L_g C_z.
	\end{aligned}
	\]
	Consequently,
	\[
	\operatorname{dist}(u,S_{\mathrm{KKT}})
	\le\|u-u^\pi\|_2
	\le\sqrt{(\kappa_{\mathrm P}C)^2+C_z^2+C_w^2}\,\rho(u).
	\]
	
	\medskip\noindent\textbf{\textit{Case 2}: $\bar z=0$ and $\|\bar w\|_2<1/\sqrt n$.} 
	Let $d:=1/\sqrt n-\|\bar w\|_2>0$. For $\|w-\bar w\|_2<d/3$ and $\|v_2\|_2<d/3$, we have
	\[
	\|w+v_2\|_2
	\le \|\bar w\|_2+\|w-\bar w\|_2+\|v_2\|_2
	<\frac1{\sqrt n}.
	\]
	Since the subdifferential of $\|\cdot\|_2/\sqrt n$ is the closed ball of radius $1/\sqrt n$ at zero and consists of a vector of norm $1/\sqrt n$ at each nonzero point, the inclusion
	\[
	v_2\in\tfrac1{\sqrt n}\partial\|z\|_2-w
	\]
	holds if and only if $z=0$.
	Define
	\[
	\widehat{\mathcal T}(u):=
	\{z-X\beta+y\}\times\bigl(X^\top w+\partial p(\beta)\bigr)\times N_{\{0\}}(z),
	\qquad
	N_{\{0\}}(z)=
	\begin{cases}
		\mathbb R^n,&z=0,\\
		\varnothing,&z\ne0,
	\end{cases}
	\]
	where $N_{\{0\}}$ is the normal cone mapping to $\{0\}$. For $v=(v_X,v_p,v_2)$, the preceding equivalence yields neighborhoods $\mathcal U$ of $\bar u$ and $\mathcal V$ of $0$ such that
	\begin{equation}\label{eq:kkt-local-polyhedral-graph}
		\operatorname{gph}\mathcal T\cap(\mathcal U\times\mathcal V)
		=\operatorname{gph}\widehat{\mathcal T}\cap(\mathcal U\times\mathcal V).
	\end{equation}
	Since $p$ is polyhedral convex, $\partial p$ is a polyhedral multifunction; see \cite[Section~4.2]{zhou2017unified}. The remaining relations defining $\operatorname{gph}\widehat{\mathcal T}$ are affine, so this graph, and hence the graph of $\widehat{\mathcal T}^{-1}$, is a finite union of polyhedral convex sets. 
	By \cite[Proposition~3H.1]{dontchev2009implicit}, $\widehat{\mathcal T}^{-1}$ is calm at $0$ for $\bar u$. Thus, there exist constants $H>0$, $\delta_u>0$, and $\delta_v>0$ such that, whenever
	\[
	\|u-\bar u\|_2\le\delta_u,\qquad
	\|v\|_2\le\delta_v,\qquad
	v\in\widehat{\mathcal T}(u),
	\]
	there exists $u^0\in\widehat{\mathcal T}^{-1}(0)$ satisfying
	\[
	\|u-u^0\|_2\le H\|v\|_2.
	\]
	Choose $\delta_u$ and $\delta_v$ small enough that
	\[
	\{u:\|u-\bar u\|_2\le\delta_u\}\subseteq\mathcal U,
	\qquad
	\{v:\|v\|_2\le\delta_v\}\subseteq\mathcal V.
	\]
	For such $u$ and $v$ with $v\in\mathcal T(u)$, \eqref{eq:kkt-local-polyhedral-graph} gives $v\in\widehat{\mathcal T}(u)$. The corresponding point $u^0$ satisfies
	\[
	\|u^0-\bar u\|_2
	\le \|u^0-u\|_2+\|u-\bar u\|_2
	\le H\|v\|_2+\|u-\bar u\|_2\le H\delta_v+\delta_u.
	\]
	Take $\delta_u>0$ and $\delta_v>0$ sufficiently small so that $u^0\in\mathcal U$. Applying
	\eqref{eq:kkt-local-polyhedral-graph} at $(u^0,0)$ gives
	$u^0\in S_{\mathrm{KKT}}$. Thus,
	\[
	\operatorname{dist}(u,S_{\mathrm{KKT}})
	\le H\|v\|_2.
	\]
	Therefore $\mathcal T^{-1}$ is calm at $0$ for $\bar u$.
	By \cite[Theorem~3H.3]{dontchev2009implicit},
	$\mathcal T$ is metrically subregular at $\bar u$ for $0$,
	which gives \eqref{eq:kkt-local-eb}.
\end{proof}

Theorem~\ref{thm:kkt-eb} provides the local KKT error bound needed for the rate analysis. Combining this bound with the global convergence established above, we obtain the following local convergence rate of N-ALM.
\begin{theorem}[Local convergence rate of N-ALM]
	\label{thm:palm}
	Let $\{(w^k,\beta^k,z^k)\}$ be the sequence generated by
	Algorithm~\ref{Algorithm 4.1}, with
	\[
	0<\sigma_k\uparrow\sigma_\infty\le+\infty,
	\qquad
	\tau_k\downarrow\tau_\infty>0.
	\]
	Write $u^k:=(w^k,\beta^k,z^k)$, with $\mathcal M_k$ given by \eqref{block diagonal matrix}. Suppose that both stopping criteria \eqref{eq:nalm-stop-a} and \eqref{eq:nalm-stop-b} hold. Let $\bar u=(\bar w,\bar\beta,\bar z)\in S_{\mathrm{KKT}}$ be the limit established above, and suppose that either
	\[
	\bar z\ne0,
	\qquad\text{or}\qquad
	\bar z=0
	\quad\text{and}\quad
	\|\bar w\|_2<\frac{1}{\sqrt n}.
	\]
	Let $\kappa>0$ be a local error-bound constant supplied by
	Theorem~\ref{thm:kkt-eb} at $\bar u$.
	Then there exists $k_0$ such that, for all $k\ge k_0$,
	\begin{align}
		\operatorname{dist}_{\mathcal M_k}
		(u^{k+1},S_{\mathrm{KKT}})
		&\le
		\mu_k\,
		\operatorname{dist}_{\mathcal M_k}
		(u^k,S_{\mathrm{KKT}}),
		\label{eq:palm-local-contraction}
	\end{align}
	where
	\[
	\gamma_k:=\max\{1,\tau_k\},
	\qquad
	\mu_k:=
	\frac{1}{1-\delta_k}
	\left(
	\delta_k+
	\frac{(1+\delta_k)\kappa\gamma_k}
	{\sqrt{\sigma_k^2+\kappa^2\gamma_k^2}}
	\right).
	\]
	With $\gamma_\infty:=\max\{1,\tau_\infty\}$,
	\begin{equation}\label{eq:palm-contraction-limit}
		\lim_{k\to\infty}\mu_k
		=
		\frac{\kappa\gamma_\infty}
		{\sqrt{\sigma_\infty^2+\kappa^2\gamma_\infty^2}}.
	\end{equation}
	
\end{theorem}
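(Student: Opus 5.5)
The argument is a direct application of the rate analysis of \cite[Theorem~2.5]{li2020asymptotically} to the proximal point interpretation of Algorithm~\ref{Algorithm 4.1}. The local KKT error bound of Theorem~\ref{thm:kkt-eb} supplies the one model-specific hypothesis that theorem needs.

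\textbf{Step 1: Algorithm~\ref{Algorithm 4.1} is an inexact variable-metric proximal point method.} I will identify the N-ALM iterates with an inexact proximal point method for the maximal monotone operator $\mathcal T$ in the metrics $\mathcal M_k$. The exact update is $\widetilde u^{k+1}=\mathcal P_k(u^k)$ with $\mathcal P_k=(\mathcal M_k+\sigma_k\mathcal T)^{-1}\mathcal M_k$.

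Global convergence $u^k\to\bar u\in S_{\mathrm{KKT}}$ has already been obtained from \eqref{eq:palm-absolute-proximal-error} and \cite[Theorem~2.3]{li2020asymptotically}. The metric bounds \eqref{eq:palm-metric-bounds} hold with nonincreasing $\mathcal M_k$, so the metric-change sequence is $\nu_k=0$.

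Next I need the relative error. Following \cite[Proposition~3.2]{li2020asymptotically}, criterion \eqref{eq:nalm-stop-b} combined with the same estimate as in \eqref{eq:palm-absolute-proximal-error} gives
\[
\|u^{k+1}-\widetilde u^{k+1}\|_{\mathcal M_k}\le\delta_k\|u^{k+1}-u^k\|_{\mathcal M_k}.
\]
This is the relative-error condition in the Li--Sun--Toh framework.

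\textbf{Step 2: Transfer the error bound to the proximal step.} By Theorem~\ref{thm:kkt-eb}, there are $\varepsilon,\kappa>0$ with $\operatorname{dist}(u,S_{\mathrm{KKT}})\le\kappa\operatorname{dist}(0,\mathcal T(u))$ whenever $\|u-\bar u\|_2\le\varepsilon$. Since $u^k\to\bar u$ and the exact steps satisfy $\|\widetilde u^{k+1}-u^k\|\to0$, both $u^k$ and $\widetilde u^{k+1}$ lie in this ball for all $k\ge k_0$.

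At $\widetilde u^{k+1}$ we have $\sigma_k^{-1}\mathcal M_k(u^k-\widetilde u^{k+1})\in\mathcal T(\widetilde u^{k+1})$. Using the $\mathcal M_k$-firm nonexpansiveness of $\mathcal P_k$ together with $\mathcal M_k\preceq\gamma_k I$, I will derive the standard estimate
\[
\operatorname{dist}_{\mathcal M_k}(\widetilde u^{k+1},S_{\mathrm{KKT}})\le\frac{\kappa\gamma_k}{\sqrt{\sigma_k^2+\kappa^2\gamma_k^2}}\operatorname{dist}_{\mathcal M_k}(u^k,S_{\mathrm{KKT}}).
\]
This is where the factor $\gamma_k$ enters: converting the Euclidean error bound into the $\mathcal M_k$-norm costs one factor of $\gamma_k$ via $\|\mathcal M_k v\|\le\gamma_k^{1/2}\|v\|_{\mathcal M_k}$, applied twice.

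\textbf{Step 3: Combine with the inexactness.} The triangle inequality and the relative-error bound give
\[
\operatorname{dist}_{\mathcal M_k}(u^{k+1},S_{\mathrm{KKT}})\le\operatorname{dist}_{\mathcal M_k}(\widetilde u^{k+1},S_{\mathrm{KKT}})+\delta_k\|u^{k+1}-u^k\|_{\mathcal M_k}.
\]
I then bound $\|u^{k+1}-u^k\|_{\mathcal M_k}$ by $\operatorname{dist}_{\mathcal M_k}(u^{k+1},S_{\mathrm{KKT}})+\operatorname{dist}_{\mathcal M_k}(u^k,S_{\mathrm{KKT}})$, using the projection of $u^k$ in the $\mathcal M_k$-metric. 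To handle $\widetilde u^{k+1}$, I use nonexpansiveness of $\mathcal P_k$ toward $S_{\mathrm{KKT}}$, which gives $\|\widetilde u^{k+1}-u^k\|\le\operatorname{dist}(u^k,S_{\mathrm{KKT}})$. Absorbing the $\delta_k\operatorname{dist}(u^{k+1},S_{\mathrm{KKT}})$ term on the left then yields \eqref{eq:palm-local-contraction} with the stated $\mu_k$, including the numerator factor $(1+\delta_k)$.

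\textbf{Step 4: The limit.} Since $\delta_k\to0$ (it is summable), $\tau_k\to\tau_\infty$ and $\sigma_k\to\sigma_\infty$, the limit \eqref{eq:palm-contraction-limit} follows directly; when $\sigma_\infty=+\infty$ it equals $0$.

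\textbf{Main obstacle.} The delicate part is the localization in Step 2. The error bound is only local, and Theorem~\ref{thm:kkt-eb} is stated at $\bar u$ rather than uniformly. I must therefore check that every point where it is invoked, namely $\widetilde u^{k+1}$ and the residual $\sigma_k^{-1}\mathcal M_k(u^k-\widetilde u^{k+1})$, eventually lies in the region where it applies. This follows from $u^k\to\bar u$, $\sigma_k\ge\sigma_0$, and $\|\widetilde u^{k+1}-u^k\|_{\mathcal M_k}\le\operatorname{dist}_{\mathcal M_k}(u^k,S_{\mathrm{KKT}})\to0$. Once this is secured, the remaining constants are bookkeeping that mirrors \cite[Theorem~2.5]{li2020asymptotically}.
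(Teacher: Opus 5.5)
Your Steps 1, 2 and 4 follow the paper's route and are correct. These are the relative error $\|u^{k+1}-\widetilde u^{k+1}\|_{\mathcal M_k}\le\delta_k\|u^{k+1}-u^k\|_{\mathcal M_k}$ from criterion (B), the localization of $\widetilde u^{k+1}$ near $\bar u$, and the estimate $\operatorname{dist}_{\mathcal M_k}(\widetilde u^{k+1},S_{\mathrm{KKT}})\le\theta_k d_k$. Here $\theta_k:=\kappa\gamma_k/\sqrt{\sigma_k^2+\kappa^2\gamma_k^2}$ and $d_k:=\operatorname{dist}_{\mathcal M_k}(u^k,S_{\mathrm{KKT}})$, and the estimate comes from the error bound plus $\mathcal M_k$-firm nonexpansiveness.

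The gap is in Step 3. The bound $\|u^{k+1}-u^k\|_{\mathcal M_k}\le\operatorname{dist}_{\mathcal M_k}(u^{k+1},S_{\mathrm{KKT}})+d_k$ is not valid. Passing through the projection $\Pi(u^k)$ only gives $\|u^{k+1}-\Pi(u^k)\|_{\mathcal M_k}+d_k$, and the first term can exceed $\operatorname{dist}_{\mathcal M_k}(u^{k+1},S_{\mathrm{KKT}})$. The relative-error condition does not rescue it. Take $\mathcal T$ the normal cone of the line $S=\{(t,0)\}$ in $\mathbb R^2$, so the exact step is the projection. Let $u^k=(0,1)$, so $\widetilde u^{k+1}=(0,0)$, and let $u^{k+1}=(\epsilon,0)$ with $0<\epsilon\le\delta_k$. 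Then $\|u^{k+1}-\widetilde u^{k+1}\|\le\delta_k\|u^{k+1}-u^k\|$ holds, yet $\|u^{k+1}-u^k\|=\sqrt{1+\epsilon^2}>1=\operatorname{dist}(u^{k+1},S)+\operatorname{dist}(u^k,S)$. So your absorption step is unsupported. Even granting it, it would give $(\delta_k+\theta_k)/(1-\delta_k)$, not the $(1+\delta_k)$ structure you say it recovers.

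The fix uses an ingredient you already have, $\|\widetilde u^{k+1}-u^k\|_{\mathcal M_k}\le d_k$. Route the triangle inequality through $\widetilde u^{k+1}$ instead of through $\Pi(u^k)$. This gives $\|u^{k+1}-u^k\|_{\mathcal M_k}\le\delta_k\|u^{k+1}-u^k\|_{\mathcal M_k}+d_k$. Hence $\|u^{k+1}-\widetilde u^{k+1}\|_{\mathcal M_k}\le\delta_k d_k/(1-\delta_k)$, and
\[
\operatorname{dist}_{\mathcal M_k}(u^{k+1},S_{\mathrm{KKT}})\le\operatorname{dist}_{\mathcal M_k}(\widetilde u^{k+1},S_{\mathrm{KKT}})+\|u^{k+1}-\widetilde u^{k+1}\|_{\mathcal M_k}\le\Bigl(\theta_k+\frac{\delta_k}{1-\delta_k}\Bigr)d_k\le\mu_k d_k .
\]
The last inequality holds because $(1+\delta_k)/(1-\delta_k)\ge1$. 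With Step 3 replaced by this, your argument is complete. It supplies the combination step the paper delegates to \cite[Theorem~2.5]{li2020asymptotically}, with a constant no larger than $\mu_k$.
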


\begin{proof}
	By \cite[Lemma~2.1 and Proposition~2.2(c)]{li2020asymptotically}, $\mathcal P_k$ is defined everywhere and is nonexpansive in the $\mathcal M_k$-norm. Also, $\mathcal P_k(v)=v$ for every $v\in S_{\mathrm{KKT}}$.
	
	Following the argument of \cite[Proposition~3.2]{li2020asymptotically}, we relate the stopping criteria to the proximal error. By \eqref{eq:nalm-multiplier-updates} and \eqref{sub_equ_problem},
	\[
	e^{k+1}:=(\nabla\psi_k(w^{k+1}),0,0)
	\in\mathcal T(u^{k+1})+\sigma_k^{-1}\mathcal M_k(u^{k+1}-u^k),
	\]
	and hence $u^{k+1}=\mathcal P_k(u^k+\sigma_k\mathcal M_k^{-1}e^{k+1})$. Nonexpansiveness gives
	\begin{equation}\label{eq:palm-proximal-error}
		E_k=\|u^{k+1}-\widetilde u^{k+1}\|_{\mathcal M_k}
		\le\sigma_k\|e^{k+1}\|_{\mathcal M_k^{-1}}
		=\frac{\sigma_k}{\sqrt{\tau_k}}\|\nabla\psi_k(w^{k+1})\|_2.
	\end{equation}
	Since $\min\{1,\sqrt{\tau_k}\}/\sqrt{\tau_k}\le1$, criterion \eqref{eq:nalm-stop-b} implies
	\begin{equation}\label{eq:palm-relative-proximal-error}
		E_k\le\delta_k\|u^{k+1}-u^k\|_{\mathcal M_k}.
	\end{equation}
	
	We follow the argument of \cite[Theorem~2.5]{li2020asymptotically} using the local error bound from Theorem~\ref{thm:kkt-eb}. Specifically, for some $r>0$,
	\begin{equation}\label{eq:palm-local-kkt-eb}
		\operatorname{dist}(u,S_{\mathrm{KKT}})\le\kappa\operatorname{dist}(0,\mathcal T(u))
		\qquad\text{whenever }\|u-\bar u\|_2\le r.
	\end{equation}
	Since $\mathcal P_k(\bar u)=\bar u$, nonexpansiveness and \eqref{eq:palm-metric-bounds} give
	\[
	\|\widetilde u^{k+1}-\bar u\|_2
	\le\underline m^{-1/2}\|\mathcal P_k(u^k)-\mathcal P_k(\bar u)\|_{\mathcal M_k}
	\le\sqrt{\frac{\gamma_0}{\underline m}}\|u^k-\bar u\|_2.
	\]
	Since $u^k$ converges to $\bar u$, there exists $k_0$ such that $\|u^k-\bar u\|_2\le r\sqrt{\underline m/\gamma_0}$ for every $k\ge k_0$. Hence \eqref{eq:palm-local-kkt-eb} applies to $\widetilde u^{k+1}$ for all such $k$.
	
	By the definition of $\mathcal P_k$,
	\begin{equation}\label{eq:palm-exact-proximal-inclusion}
		\sigma_k^{-1}\mathcal M_k(u^k-\widetilde u^{k+1})\in\mathcal T(\widetilde u^{k+1}).
	\end{equation}
	Combining this inclusion with \eqref{eq:palm-local-kkt-eb} and $\lambda_{\max}(\mathcal M_k)=\gamma_k$ gives, for $k\ge k_0$,
	\begin{equation}\label{eq:palm-exact-distance-bound}
		\operatorname{dist}_{\mathcal M_k}(\widetilde u^{k+1},S_{\mathrm{KKT}})
		\le\frac{\kappa\sqrt{\gamma_k}}{\sigma_k}\|\mathcal M_k(u^k-\widetilde u^{k+1})\|_2
		\le\frac{\kappa\gamma_k}{\sigma_k}\|u^k-\widetilde u^{k+1}\|_{\mathcal M_k}.
	\end{equation}
	Estimate \eqref{eq:palm-exact-distance-bound} is the error-bound estimate at the exact proximal points used in the proof of \cite[Theorem~2.5]{li2020asymptotically}. Combining it with \eqref{eq:palm-relative-proximal-error} and the firm nonexpansiveness inequality in \cite[Proposition~2.2(c)]{li2020asymptotically}, the same argument gives \eqref{eq:palm-local-contraction} with the stated $\mu_k$, taking the metric-change sequence $\nu_k=0$. This argument only uses the error bound at $\widetilde u^{k+1}$, which is ensured above for $k\ge k_0$. Finally, $\delta_k\to0$, $\gamma_k\to\gamma_\infty$, and $\sigma_k\uparrow\sigma_\infty$ give \eqref{eq:palm-contraction-limit}.
\end{proof}

\noindent\textit{Remark.}
By \eqref{eq:palm-contraction-limit}, $\lim_{k\to\infty}\mu_k<1$, which gives Q-linear convergence of the distance $\operatorname{dist}_{\mathcal M_k}(u^k,S_{\mathrm{KKT}})$. If $\sigma_\infty=+\infty$, then $\mu_k\to0$, giving Q-superlinear convergence, with finite termination allowed. In practice, however, the penalty parameter is often bounded above to avoid numerical difficulties caused by excessively large values.
\section{Numerical Experiments}\label{numersec}
In this section, we conduct numerical experiments to evaluate the computational performance of N-ALM for the SRCL problem \eqref{square_root_clustered_lasso_model}. We first report its accuracy and running time on high-dimensional real-world datasets and then compare its computational performance with d-ADMM for the same SRCL problem.

It is worth noting that the SRCL problem \eqref{square_root_clustered_lasso_model} is considered for the first time in this paper, and no publicly available algorithms are currently available for performance comparison. To address this dilemma, we further develop two alternating direction method of multipliers (ADMM) algorithms for directly solving the SRCL problem \eqref{square_root_clustered_lasso_model} and its dual problem, respectively. We use the prefixes ``p-'' and ``d-'' to distinguish the primal and dual implementations, respectively.
In the following numerical experiments, the tuning parameters $\lambda_1$ and $\lambda_2$ in the SRCL model \eqref{square_root_clustered_lasso_model} are chosen as
\begin{equation}\label{alpha12}
	\lambda_1 = \alpha_1 \frac{\|X^\top y\|_\infty}{\|y\|_2},
	\quad\mathrm{and}\quad
	\lambda_2 = \alpha_2\lambda_1,
\end{equation}
where $0<\alpha_1<1$ and $\alpha_2>0$. This is an empirical parameter rule for computational benchmarking; it is not the conditional-quantile tuning rule in Theorem~\ref{thm:pivotal_tuning}, and the finite-sample guarantees do not automatically apply to these parameter choices. The factor $1/\sqrt n$ in the normalized score is absorbed into the empirical choice of $\alpha_1$. In particular, to obtain reasonable clustering results, we set $\alpha_2=O(1/p)$ so that the two penalty terms have comparable magnitudes.
For the outer proximal augmented Lagrangian iteration (Algorithm \ref{Algorithm 4.1}) and the inner semismooth Newton solver (Sub-Algorithm \ref{Algorithm 4.2}), the maximum number of iterations is set to 200. For the ADMM-type methods, the maximum number of iterations is set to 20,000. All algorithms terminate when the prescribed stopping tolerance is reached or when the total CPU time exceeds 3 hours. All numerical experiments are conducted on a personal computer equipped with an AMD Ryzen 5 9600X CPU with a base frequency of 3.90 GHz and 32 GB of RAM, running Microsoft Windows 10 with MATLAB R2024a.

\subsection{Formulations of p-ADMM and d-ADMM}\label{pdadmm}
In this subsection, we briefly introduce two variants of ADMM for solving the SRCL model \eqref{square_root_clustered_lasso_model}, which are derived from the primal formulation \eqref{p_problem} and dual formulation \eqref{d_problem}, respectively.
The d-ADMM serves as a baseline to verify the computational efficiency of the proposed N-ALM algorithm. 
For completeness, we also present p-ADMM for the primal formulation \eqref{p_problem}; the numerical comparison below concerns N-ALM and d-ADMM.

By introducing slack variables $\alpha \in \mathbb{R}^p$, the primal form of problem  \eqref{p_problem} can be rewritten equivalently as
$$
\text{(P)} \quad\min_{\beta \in \mathbb{R}^p, z \in \mathbb{R}^n, \alpha \in \mathbb{R}^p} \Big\{\frac{1}{\sqrt{n}}\|z\|_2 + p(\alpha) \ \mid \ \beta - \alpha = 0,\ X\beta - y = z \Big\},
$$
and its dual formulation \eqref{d_problem} takes the following equivalent  form:
$$
\text{(D)} \quad \min_{w \in \mathbb{R}^n, s \in \mathbb{R}^p, q \in \mathbb{R}^n} \Big\{ p^*(s)+\delta_{B_2^{(1/\sqrt{n})}}(q) +\langle w, y \rangle \ \mid \ X^{\top} w + s = 0, \ w - q = 0 \Big\}.
$$
Let $\sigma > 0$ be given. The corresponding augmented Lagrangian function associated with (P) is given by
$$
{L}_{\sigma}(\beta, z, \alpha; s, w) = \frac{1}{\sqrt{n}}\|z\|_2 + p(\alpha) + \langle s, \beta - \alpha \rangle + \frac{\sigma}{2} \|\beta - \alpha\|_2^2 + \langle w, X\beta - y - z \rangle + \frac{\sigma}{2} \|X\beta - y - z\|_2^2,
$$
where $w \in \mathbb{R}^n$ and $s \in \mathbb{R}^p$ are multipliers. By direct calculation, the iterative steps of the algorithm for Problem (P) are obtained as follows.

\begin{framed} 
	\noindent  
	\textbf{Algorithm 6.1: p-ADMM: An alternating direction method of multipliers for (P)}
	\label{Algorithm 6.1} 
	\vskip 3.0mm \hrule \vskip 3mm 
	
	\noindent{\textbf{Initialization.}} $\kappa\in(0,(1+\sqrt{5})/2)$, $\sigma>0$; initialize $z^0\in\mathbb{R}^n$, $\alpha^0\in\mathbb{R}^p$, 
	$s^0\in\mathbb{R}^p$, $w^0\in\mathbb{R}^n$, and let $k:=0$. Do:
	\vskip 2mm 
	
	\noindent{\textbf{Step 1.}} Compute	$\beta^{k+1}:=\underset{\beta\in\mathbb{R}^p}{\operatorname{argmin}} \Big\{ {L}_{\sigma}(\beta,z^k,\alpha^k;s^k,w^k)$\Big\};
	\vskip 2mm 
	
	\noindent{\textbf{Step 2.}} Compute	$z^{k+1}=\underset{z\in\mathbb{R}^n}{\operatorname{argmin}}\Big\{{L}_{\sigma}(\beta^{k+1},z,\alpha^k;s^k,w^k)\Big\}
	=\operatorname{Prox}_{\frac{1}{\sqrt{n}\sigma}\|\cdot\|_2}\left(X\beta^{k+1}-y+\frac{w^k}{\sigma}\right)$;
	\vskip 2mm 
	
	\noindent{\textbf{Step 3.}} Compute	$\alpha^{k+1}=\underset{\alpha\in\mathbb{R}^p}{\operatorname{argmin}} \Big\{ {L}_{\sigma}(\beta^{k+1},z^{k+1},\alpha;s^k,w^k)\Big\}
	=\operatorname{Prox}_{p(\cdot)/\sigma}\left(\beta^{k+1}+\frac{s^k}{\sigma}\right)$;
	\vskip 2mm 
	
	\noindent{\textbf{Step 4.}} Update	$s^{k+1}=s^k+\kappa\sigma(\beta^{k+1}-\alpha^{k+1})$ and $w^{k+1}=w^k+\kappa\sigma(X\beta^{k+1}-y-z^{k+1})$;
	\vskip 2mm 
	
	\noindent{\textbf{Step 5.}} 
	Let $k:=k+1$ and go to Step 1.
\end{framed}

It is straightforward to verify that solving the $\beta$-subproblem in
\textbf{Step 1} is equivalent to solving the following $p$-dimensional
linear system:
$$
\Big(\sigma I_p + \sigma X^{\top} X\Big) \beta=X^{\top} \Big(\sigma (y + z^k)-w^k\Big)+\sigma \alpha^k-s^k,
$$
where $I_p$ denotes the identity matrix in $\mathbb{R}^p$. This linear system can be efficiently solved using standard linear system solvers,
especially in the low-dimensional setting with $p\ll n$, and we omit the implementation details for brevity.

Let $\sigma>0$ be given. The corresponding augmented Lagrangian function
associated with the dual problem (D) is given by
$$
	\widetilde{L}_{\sigma}(w,s,q;\beta,z)={}p^*(s)+ \delta_{B_2^{(1/\sqrt{n})}}(q)+ \langle w,y\rangle- \langle \beta,X^{\top}w+s\rangle 
	+ \frac{\sigma}{2}\left\|X^{\top}w+s\right\|_2^2+ \langle z,w-q\rangle+ \frac{\sigma}{2}\left\|w-q\right\|_2^2,
$$
where $\beta\in\mathbb{R}^p$ and $z\in\mathbb{R}^n$ are the Lagrange
multipliers, with $z$ following the same sign convention $z=X\beta-y$ as in \eqref{p_problem}. By direct calculation, the iterative scheme for solving
Problem (D) is given as follows.

\begin{framed} 
	\noindent  
	\textbf{Algorithm 6.2: d-ADMM: An alternating direction method of multipliers for (D)}
	\label{Algorithm 6.2} 
	\vskip 3.0mm \hrule \vskip 3mm 
	
	\noindent{\textbf{Initialization.}} $\kappa\in(0,(1+\sqrt{5})/2)$, $\sigma>0$; initialize 
	$w^0\in\mathbb{R}^n$, $s^0\in\mathbb{R}^p$, $q^0\in\mathbb{R}^n$, 
	$\beta^0\in\mathbb{R}^p$, $z^0\in\mathbb{R}^n$. Let $k:=0$. Do:
	\vskip 2mm 
	
	\noindent{\textbf{Step 1.}} Compute	$w^{k+1}:=\underset{w\in\mathbb{R}^n}{\operatorname{argmin}} \Big\{\widetilde{L}_{\sigma}(w,s^k,q^k;\beta^k,z^k)\Big\}$;
	\vskip 2mm 
	
	\noindent{\textbf{Step 2.}} Compute	$s^{k+1}:=\underset{s\in\mathbb{R}^p}{\operatorname{argmin}}	\Big\{\widetilde{L}_{\sigma}(w^{k+1},s,q^k;\beta^k,z^k)\Big\}
	=\operatorname{Prox}_{p^*(\cdot)/\sigma}\left(\frac{\beta^k}{\sigma}-X^{\top}w^{k+1}\right)$;
	\vskip 2mm 
	
	\noindent{\textbf{Step 3.}} Compute	$q^{k+1}:=\underset{q\in\mathbb{R}^n}{\operatorname{argmin}}	\Big\{\widetilde{L}_{\sigma}(w^{k+1},s^{k+1},q;\beta^k,z^k)\Big\}
	=\operatorname{Proj}_{B_2^{(1/\sqrt{n})}}\left(	w^{k+1}+\frac{z^k}{\sigma}\right)$;
	\vskip 2mm 
	
	\noindent{\textbf{Step 4.}} Update	$\beta^{k+1}=\beta^k-\kappa\sigma(X^{\top}w^{k+1}+s^{k+1})$ and $z^{k+1}=z^k+\kappa\sigma(w^{k+1}-q^{k+1})$;
	\vskip 2mm 
	
	\noindent{\textbf{Step 5.}} 
	Let $k:=k+1$ and go to Step 1. 
\end{framed}

In a similar manner, the $w$-subproblem can be reduced to solving the following $n$-dimensional linear system:
$$
\big(\sigma I_n+\sigma XX^{\top}\big)w=X\beta^k-y-z^k+\sigma q^k-\sigma Xs^k,
$$
which can be efficiently solved using standard linear system solvers, especially when $n\ll p$. The convergence properties of p-ADMM and d-ADMM follow from the convergence analysis established in Theorem B1 of \cite{fazel2013hankel}. Therefore, we omit the convergence analysis for these two algorithms.

\subsection{Stopping criteria}
In this part, we focus on constructing the stopping criteria implemented in Algorithm \ref{Algorithm 4.1}.
We can deduce that the primal-dual Karush--Kuhn--Tucker (KKT) conditions associated with problems \eqref{p_problem} and \eqref{d_problem} can be expressed as
\begin{equation}\label{kkt_p}
	y+z-X\beta=0,\quad 
	-X^\top w\in\partial p(\beta),\quad
	w\in\partial\frac{1}{\sqrt{n}}\|z\|_2.
\end{equation}
Consequently, we can define the relative KKT residual as follows:
\begin{equation} \label{eq:eta_kkt}
	\eta_{\text{kkt}} = \max\Big\{\eta_\beta, \eta_z, \eta_w\Big\}, 
\end{equation}
where
\begin{align*}
	\eta_\beta &= \frac{\| -X^{\top} w+ \operatorname{Prox}_{p^{*}}(\beta -X^{\top} w)\|_2}{1 + \|\beta\|_2 + \|X^{\top} w\|_2}, &
	\eta_z &= \frac{\|w - \Pi_{B_2^{(1/\sqrt{n})}}(z+w)\|_2}{1 + \|z\|_2 + \|w\|_2}, &
	\eta_w &= \frac{\|X\beta - z - y\|_2}{1 + \|y\|_2} .
\end{align*}

On the one hand, for the equivalent formulation (P) of primal problem \eqref{p_problem}, we see that the relative KKT residual \eqref{eq:eta_kkt} takes the following equivalent form
\begin{equation}\label{PADMM stop}
	\eta_{p} = \max\Big\{\eta_1,\eta_2,\eta_3,\eta_4,\eta_5\Big\},
\end{equation}
where
\begin{align*}
	\eta_1 &= \frac{\|X\beta - z - y\|_2}{1 + \|y\|_2}, &
	\eta_2 &= \frac{\|\beta - \alpha\|_2}{1 + \|\beta\|_2 + \|\alpha\|_2}, &
	\eta_3 &= \frac{\|X^{\top}w + s\|_2}{1 + \|X^{\top}w\|_2 + \|s\|_2} , &
\end{align*}
\begin{align*}
	\eta_4 &= \frac{\|z - \operatorname{Prox}_{\frac{1}{\sqrt{n}}\|\cdot\|_{2}}(z+w)\|_2}{1 + \|z\|_2 + \|w\|_2} , &
	\eta_5 &= \frac{\|\alpha - \operatorname{Prox}_{p(\cdot)}(\alpha + s)\|_2}{1 + \|\alpha\|_2 + \|s\|_2} . &
\end{align*}
On the other hand, for the equivalent formulation (D) of dual problem \eqref{d_problem}, its
relative KKT residual can be reorganized as follows:
\begin{equation}\label{DADMM stop} 
	\eta_{d} = \max\big\{\eta_1,\eta_2,\eta_3,\eta_4,\eta_5\big\},
\end{equation}
where
\begin{align*}
	\eta_1 &= \frac{\|X^{\top}w + s\|_2}{1 + \|X^{\top}w\|_2 + \|s\|_2}, &
	\eta_2 &= \frac{\|w - q\|_2}{1 + \|w\|_2 + \|q\|_2}, &
	\eta_3 &= \frac{\|X\beta - z - y\|_2}{1 + \|y\|_2} , &
\end{align*}
\begin{align*}
	\eta_4 &= \frac{\|s - \operatorname{Prox}_{p^*(\cdot)}(s+\beta)\|_2}{1 + \|s\|_2 + \|\beta\|_2} , &
	\eta_5 &= \frac{\|q - \Pi_{B_2^{(1/\sqrt{n})}}(q + z)\|_2}{1 + \|q\|_2 + \|z\|_2} . &
\end{align*}
Hence, in the following tests, it is reasonable to terminate the iterative process of Algorithms \ref{Algorithm 4.1}, \ref{Algorithm 6.1}, and \ref{Algorithm 6.2} such that the measurements $\eta_{\text{KKT}}$, $\eta_p$, and $\eta_d$ are sufficiently small. 

In addition to the above relative KKT residuals $\eta_{\mathrm{KKT}}$, we further evaluate the accuracy of the solutions by monitoring the relative duality gap between the primal problem (P) and dual problem (D), which is defined as
\begin{equation}\label{eq:gap}
	\eta_{\text{gap}}:=\frac{p(\beta)+\frac{1}{\sqrt{n}}\|X\beta-y\|_2+p^*(s)+\langle w,y\rangle}{1+p(\beta)+\frac{1}{\sqrt{n}}\|X\beta-y\|_2+|p^*(s)|+|\langle w,y\rangle|}.
\end{equation}
The penalty parameters $\sigma_k$ and $\tau_k$ in Algorithm \ref{Algorithm 4.1} are updated according to
$$
	\sigma_{k+1} = 
	\begin{cases} 
		\sigma_k, & \text{if } \text{mod}(k, 3) \neq 0 \\ 
		\min\{3\sigma_k, 10^6\}, & \text{otherwise} 
	\end{cases}
	\quad\text{and}\quad \tau_{k+1} = 
	\begin{cases} 
		\tau_k & \text{if } \text{mod}(k, 3) \neq 0 \\ 
		\max\{2\tau_k/3, 10^{-6}\}, & \text{otherwise}. 
	\end{cases}
$$
Such adaptive updating strategies for penalty parameters are widely adopted in augmented Lagrangian methods. We therefore omit further discussion for brevity.

\subsection{Numerical Results on UCI Data Sets}
We adopt several datasets from the UCI (University of California, Irvine) Machine Learning Repository, which are downloaded in LIBSVM format \citep{chang2011libsvm}  from the \href{https://www.csie.ntu.edu.tw/~cjlin/libsvmtools/datasets/}{LIBSVM dataset collection}.
The selected datasets include E2006, pyrim5, triazines4, abalone, bodyfat, and housing, all of which correspond to high-dimensional low-sample-size settings.
For a given tolerance $\epsilon$, we terminate the iterative process of N-ALM when
\begin{equation}\label{ssnal stop} 
	\max\big\{\eta_{\text{gap}}, \eta_{\text{kkt}}\big\} \le \epsilon.
\end{equation}
The detailed numerical results under different settings are reported in Tables \ref{tab:uci_results} and \ref{tab:performance_updated_v4}.

In Table \ref{tab:uci_results}, the first column contains the problem names, where $n$ is relatively small and $p$ is relatively large. The notation $\lambda_{\max}(XX^\top)$ represents the largest eigenvalue of $XX^\top$. The second column reports the values of $\alpha_1$ and $\alpha_2$ used to determine the tuning parameters defined in \eqref{alpha12}. Specifically, to obtain a reasonable number of nonzero elements in the optimal solution $\beta$, we choose $\alpha_1 \in \{10^{-5}, 10^{-6}\}$ for the E2006.train and E2006.test instances, $\alpha_1 \in \{10^{-4}, 10^{-5}\}$ for the bodyfat instance, and $\alpha_1 \in \{10^{-2}, 10^{-3}\}$ for the remaining instances. In total, there are $42$ tested cases in the first test, as reported in Table \ref{tab:uci_results}.
The third column reports several metrics associated with the optimal solution $\widehat{\beta}$. We define $\text{nnz}(\widehat{\beta})$ as
\begin{equation} 
	\text{nnz}(\widehat{\beta}) := \min \Big\{ k \,\Big|\, \sum_{i=1}^k |\widetilde{\beta}_i| \ge 0.99999 \|\widehat{\beta}\|_1 \Big\},
\end{equation}
where $\widetilde{\beta}$ denotes the vector obtained by sorting the entries of $\widehat{\beta}$ in descending order of magnitude, i.e., $|\widetilde{\beta}_1| \ge |\widetilde{\beta}_2| \ge \dots \ge |\widetilde{\beta}_p|$. The symbol $\text{gnnz}(\widehat{\beta})$ denotes the number of groups identified from the solution. Within each group, the ratio of the absolute values of any two entries is required to lie in the interval $[5/6,6/5]$. In addition, all entries whose absolute values are below $10^{-4}$ are assigned to a single common group.
Moreover, the fourth column reports the objective function values of \eqref{square_root_clustered_lasso_model}, the fifth column reports $\eta_{\text{kkt}}$ defined in \eqref{eq:eta_kkt}, the sixth column reports $\eta_{\text{gap}}$ defined in \eqref{eq:gap}, and the last column reports the running time in seconds, with the time presented in hours:minutes:seconds format.

  	\setlength{\tabcolsep}{6pt}
  	\renewcommand{\arraystretch}{0.9}
  	 
  	\begin{longtable}{|>{\raggedright\arraybackslash}p{4.2cm}|c|c|r|r|r|r|}
  		\caption{Numerical results of N-ALM on UCI data sets with different $\alpha_1$ and $\alpha_2$ under the termination criterion $\max\{ \eta_{\text{gap}},  \eta_{\text{kkt}}\} \leq 10^{-6}$. } \label{tab:uci_results} \\
  		\hline
  		Pro-name ($n; p$) & $\alpha_1; \alpha_2$ & nnz($\beta$); gnnz($\beta$) & pobj & $\eta_{\text{kkt}}$ & $ \eta_{\text{gap}}$ & time \\ \hline
  		\endfirsthead
  		\multicolumn{7}{c}{\bfseries \tablename\ \thetable{} -- continued} \\
  		\hline
  		Pro-name ($n; p$) & $\alpha_1; \alpha_2$ & nnz($\beta$); gnnz($\beta$) & pobj & $\eta_{\text{kkt}}$ & $ \eta_{\text{gap}}$ & time \\ \hline
  		\endhead
  		\hline
  		\multicolumn{7}{|r|}{{Continued on the next page}} \\
  		\hline
  		\endfoot
  		\hline
  		\endlastfoot
  		
  		\multirow{6}{=}{E2006.train (16087; 150360)\\ $\lambda_{\max}(XX^\top) =1.91\text{e}+05$}
  		& 1e$-5$; 1e$-5$ & 4; 4 & 4.88110e+1 & 6.9e-7 & 6.9e-7 & 11 \\*
  		& 1e$-5$; 1e$-6$ & 15; 5 & 4.86127e+1 & 7.4e-8 & 7.8e-7 & 19 \\*
  		& 1e$-5$; 1e$-7$ & 24; 5 & 4.85595e+1 & 2.0e-7 & 8.0e-7 & 19 \\* \cline{2-7}
  		& 1e$-6$; 1e$-4$ & 7; 7 & 4.87243e+1 & 2.2e-7 & 8.8e-7 & 16 \\*
  		& 1e$-6$; 5e$-5$ & 31; 16 & 4.84618e+1 & 1.1e-7 & 9.9e-7 & 23 \\*
  		& 1e$-6$; 1e$-5$ & 356; 8 & 4.70894e+1 & 7.8e-9 & 8.4e-7 & 57 \\ \hline
  		
  		\multirow{6}{=}{E2006.test (3308; 150358)\\ $\lambda_{\max}(XX^\top) =4.79\text{e}+04$}
  		& 1e$-5$; 1e$-5$ & 10; 6 & 2.18509e+1 & 2.1e-8 & 8.0e-7 & 10 \\*
  		& 1e$-5$; 1e$-6$ & 42; 9 & 2.13824e+1 & 7.0e-9 & 8.6e-7 & 13 \\*
  		& 1e$-5$; 1e$-7$ & 61; 5 & 2.12543e+1 & 6.5e-9 & 8.8e-7 & 14 \\* \cline{2-7}
  		& 1e$-6$; 1e$-4$ & 21; 5 & 2.16349e+1 & 4.8e-9 & 9.6e-7 & 12 \\*
  		& 1e$-6$; 5e$-5$ & 86; 6 & 2.10405e+1 & 8.1e-9 & 8.7e-7 & 16 \\*
  		& 1e$-6$; 1e$-5$ & 980; 6 & 1.74579e+1 & 1.4e-9 & 8.5e-7 & 1:23 \\ \hline
  		
  		\multirow{6}{=}{pyrim5 (74; 201376)\\ $\lambda_{\max}(XX^\top) =1.22\text{e}+06$}
  		& 1e$-2$; 5e$-5$ & 69; 5 & 1.29260e+0 & 2.8e-7 & 9.7e-7 & 22 \\*
  		& 1e$-2$; 1e$-5$ & 88; 9 & 4.78402e-1 & 2.5e-7 & 8.3e-7 & 27 \\*
  		& 1e$-2$; 1e$-6$ & 80; 8 & 2.42379e-1 & 8.2e-7 & 8.2e-7 & 11 \\* \cline{2-7}
  		& 1e$-3$; 5e$-5$ & 79; 6 & 2.29042e-1 & 7.5e-7 & 7.5e-7 & 22 \\*
  		& 1e$-3$; 1e$-5$ & 84; 5 & 1.11376e-1 & 9.9e-7 & 9.9e-7 & 21 \\*
  		& 1e$-3$; 1e$-6$ & 86; 6 & 8.37286e-2 & 9.9e-7 & 9.9e-7 & 16 \\ \hline
  		
  		\multirow{6}{=}{triazines4 (186; 635376)\\ $\lambda_{\max}(XX^\top) =2.07\text{e}+07$}
  		& 1e$-2$; 1e$-5$ & 360; 6  & 2.41031e+0 & 3.6e-7 & 5.3e-8 & 51 \\*
  		& 1e$-2$; 5e$-6$ & 589; 9  & 1.90339e+0 & 9.3e-7 & 2.7e-7 & 1:23 \\*
  		& 1e$-2$; 1e$-6$ & 1738; 6 & 1.25306e+0 & 6.3e-7 & 9.8e-7 & 3:52 \\* \cline{2-7}
  		& 1e$-3$; 1e$-5$ & 941; 6  & 8.15855e-1 & 9.9e-7 & 9.9e-7 & 3:15 \\*
  		& 1e$-3$; 5e$-6$ & 1088; 8 & 6.16552e-1 & 9.9e-7 & 9.9e-7 & 3:07 \\*
  		& 1e$-3$; 1e$-6$ & 2817; 9 & 4.18310e-1 & 5.6e-7 & 5.6e-7 & 4:21 \\ \hline
  		
  		\multirow{6}{=}{abalone (4177; 6435)\\ $\lambda_{\max}(XX^\top) =5.21\text{e}+05$}
  		& 1e$-2$; 1e$-4$ & 16; 6 & 1.74201e+2 & 2.9e-8 & 5.3e-7 & 7 \\*
  		& 1e$-2$; 5e$-5$ & 19; 7 & 1.68743e+2 & 4.2e-8 & 6.2e-7 & 7 \\*
  		& 1e$-2$; 1e$-5$ & 17; 7 & 1.64072e+2 & 1.9e-7 & 6.6e-7 & 8 \\* \cline{2-7}
  		& 1e$-3$; 1e$-4$ & 43; 5 & 1.41833e+2 & 6.3e-7 & 6.3e-7 & 24 \\*
  		& 1e$-3$; 5e$-5$ & 44; 8 & 1.40555e+2 & 9.2e-7 & 9.2e-7 & 22 \\*
  		& 1e$-3$; 1e$-5$ & 46; 7 & 1.39405e+2 & 8.5e-7 & 8.5e-7 & 23 \\ \hline
  		
  		\multirow{6}{=}{bodyfat (252; 116280)\\ $\lambda_{\max}(XX^\top) =5.29\text{e}+04$}
  		& 1e$-4$; 1e$-4$ & 84; 11  & 4.73015e-2 & 8.4e-7 & 8.4e-7 & 14 \\*
  		& 1e$-4$; 5e$-5$ & 159; 5  & 3.15971e-2 & 9.5e-7 & 9.5e-7 & 15 \\*
  		& 1e$-4$; 1e$-5$ & 237; 10 & 1.13149e-2 & 8.3e-7 & 9.0e-7 & 39 \\* \cline{2-7}
  		& 1e$-5$; 1e$-4$ & 239; 11 & 6.60526e-3 & 7.3e-7 & 7.3e-7 & 54 \\*
  		& 1e$-5$; 5e$-5$ & 240; 12 & 3.56418e-3 & 9.9e-7 & 9.9e-7 & 53 \\*
  		& 1e$-5$; 1e$-5$ & 244; 11 & 1.13143e-3 & 9.1e-7 & 9.1e-7 & 39 \\ \hline
  		
  		\multirow{6}{=}{housing (506; 77520)\\ $\lambda_{\max}(XX^\top) =3.28\text{e}+05$}
  		& 1e$-2$; 5e$-5$ & 89; 5   & 1.44559e+2 & 6.5e-7 & 7.1e-7 & 10 \\*
  		& 1e$-2$; 1e$-5$ & 142; 5  & 9.59228e+1 & 8.3e-7 & 8.3e-7 & 11 \\*
  		& 1e$-2$; 1e$-6$ & 156; 20 & 8.00068e+1 & 7.3e-7 & 7.3e-7 & 8 \\* \cline{2-7}
  		& 1e$-3$; 5e$-5$ & 213; 7  & 6.17610e+1 & 8.9e-7 & 8.9e-7 & 22 \\*
  		& 1e$-3$; 1e$-5$ & 286; 5  & 4.52568e+1 & 2.4e-7 & 7.4e-7 & 26 \\*
  		& 1e$-3$; 1e$-6$ & 339; 8  & 3.83327e+1 & 2.7e-7 & 9.3e-7 & 18 \\
  	\end{longtable}
  	  
    It can be seen from Table \ref{tab:uci_results} that N-ALM consistently achieves highly accurate solutions under different choices of $\alpha_1$ and $\alpha_2$, with both the KKT residual and the relative duality gap satisfying the termination criterion
\[
\max\{\eta_{\mathrm{gap}},\eta_{\mathrm{kkt}}\}\leq10^{-6}.
\]
Moreover, the computing time remains reasonable across datasets of different sizes and dimensions. We also see that most test instances can be solved within several tens of seconds, while the computing time remains below four minutes even for the more challenging cases. Overall, these results demonstrate the effectiveness and numerical robustness of N-ALM across different datasets and parameter settings.
     
    We have clarified in Subsection \ref{pdadmm} that, the ADMM method based on the dual formulation, referred to as d-ADMM and presented in Algorithm~\ref{Algorithm 6.2}, may be particularly effective for high-dimensional problems with $n\ll p$. 
    We therefore conduct additional experiments to compare the performance of d-ADMM and N-ALM.
    For each data set, we consider two representative parameter settings for $\alpha_1$ and $\alpha_2$ and report the detailed numerical results in Table~\ref{tab:performance_updated_v4}. The columns labeled ``Rel. Res.'', ``pobj'', ``iter'', and ``time'' report the relative residual, the primal objective value, the number of iterations, and the running time, respectively. Specifically, the ``Rel. Res.'' column corresponds to $\eta_{\mathrm{kkt}}$ in \eqref{eq:eta_kkt} for N-ALM and $\eta_d$ in \eqref{DADMM stop} for d-ADMM. For N-ALM, the total number of Newton iterations performed in the inner SSN procedure is additionally reported in parentheses.
    The algorithms are terminated when $\max\{\eta_{\mathrm{gap}},\eta_{\mathrm{kkt}}\}\leq 10^{-6}$ for N-ALM and $\eta_d\leq 10^{-6}$ for d-ADMM. In addition, both algorithms are terminated if the iteration count reaches $20{,}000$ before the corresponding stopping criterion is satisfied. In such cases, the corresponding entries are marked by ``---''.

     \setlength{\tabcolsep}{3.8pt}
     \renewcommand{\arraystretch}{1.2}
     
     \begin{longtable}{|>{\raggedright\arraybackslash}p{3.0cm}|c|r|r|r|r|r|r|r|r|}
     	\caption{Numerical results of N-ALM and d-ADMM on UCI data sets. In the table, ``a'' = N-ALM and ``b'' = d-ADMM. The termination criterion is $\max\{\eta_{\text{gap}},\eta_{\text{kkt}}\}\leq 10^{-6}$ for N-ALM and $\eta_d\leq 10^{-6}$ for d-ADMM.}
     	\label{tab:performance_updated_v4} \\
     	\hline
     	Pro-name & $\alpha_1;\alpha_2$ & \multicolumn{2}{r|}{Rel. Res.} & \multicolumn{2}{r|}{pobj} & \multicolumn{2}{r|}{iter} & \multicolumn{2}{r|}{time} \\
     	\cline{3-10}
     	&                    & a             & b             & a            & b            & a          & b          & a       & b       \\
     	\hline
     	\endfirsthead
     	
     	\multirow{2}{=}{E2006.train}
     	& $1\mathrm{e}{-5};\,1\mathrm{e}{-7}$ & 8.0e-7 & 9.6e-7 & 4.8560e+1 & 4.8560e+1 &  65(113) &  354 & 19    & 1:37 \\
     	& $1\mathrm{e}{-6};\,1\mathrm{e}{-5}$ & 8.4e-7 & 1.0e-6 & 4.7089e+1 & 4.7089e+1 & 124(266) &  875 & 57    & 2:59 \\
     	\hline
     	
     	\multirow{2}{=}{E2006.test}
     	& $1\mathrm{e}{-5};\,1\mathrm{e}{-7}$ & 8.8e-7 & 1.0e-6 & 2.1254e+1 & 2.1254e+1 & 102(190) &  771 & 14    & 33 \\
     	& $1\mathrm{e}{-6};\,1\mathrm{e}{-5}$ & 8.5e-7 & 1.0e-6 & 1.7458e+1 & 1.7458e+1 & 181(741) & 5174 & 1:23  & 3:27 \\
     	\hline
     	
     	\multirow{2}{=}{pyrim5}
     	& $1\mathrm{e}{-2};\,1\mathrm{e}{-6}$ & 8.2e-7 & 7.1e-6 & 2.4238e-1 & 2.4239e-1 & 38(132) & ---  & 11    & 17:20 \\
     	& $1\mathrm{e}{-3};\,1\mathrm{e}{-6}$ & 9.9e-7 & 3.6e-5 & 8.3729e-2 & 8.3732e-2 & 61(189) & ---  & 16    & 16:56 \\
     	\hline
     	
     	\multirow{2}{=}{triazines4}
     	& $1\mathrm{e}{-2};\,1\mathrm{e}{-6}$ & 9.8e-7 & 7.5e-6 & 1.2531e+0 & 1.2531e+0 & 69(435) & ---  & 3:52  & 1:14:21 \\
     	& $1\mathrm{e}{-3};\,1\mathrm{e}{-6}$ & 5.6e-7 & 9.3e-5 & 4.1831e-1 & 4.1837e-1 & 83(566) & ---  & 4:21  & 1:13:14 \\
     	\hline
     	
     	\multirow{2}{=}{abalone}
     	& $1\mathrm{e}{-2};\,1\mathrm{e}{-5}$ & 6.6e-7 & 9.9e-7 & 1.6407e+2 & 1.6407e+2 & 39(127) & 5195 & 8     & 2:20 \\
     	& $1\mathrm{e}{-3};\,1\mathrm{e}{-5}$ & 8.5e-7 & 1.0e-6 & 1.3941e+2 & 1.3941e+2 & 43(123) & 2240 & 23    & 59 \\
     	\hline
     	
     	\multirow{2}{=}{bodyfat}
     	& $1\mathrm{e}{-4};\,1\mathrm{e}{-5}$ & 9.0e-7 & 9.3e-5 & 1.1315e-2 & 1.1655e-2 & 54(537) & ---  & 39    & 11:40 \\
     	& $1\mathrm{e}{-5};\,1\mathrm{e}{-5}$ & 9.1e-7 & 1.1e-5 & 1.1314e-3 & 1.3031e-3 & 85(529) & ---  & 39    & 11:42 \\
     	\hline
     	
     	\multirow{2}{=}{housing}
     	& $1\mathrm{e}{-2};\,1\mathrm{e}{-6}$ & 7.3e-7 & 1.0e-6 & 8.0007e+1 & 8.0007e+1 & 44(148) & 3604 & 8     & 2:15 \\
     	& $1\mathrm{e}{-3};\,1\mathrm{e}{-6}$ & 9.3e-7 & 1.0e-6 & 3.8333e+1 & 3.8333e+1 & 77(249) & 9079 & 18    & 5:46 \\
     	\hline
     \end{longtable}
     
     It can be seen from Table \ref{tab:performance_updated_v4} that N-ALM exhibits a clear computational advantage over d-ADMM on the tested high-dimensional data sets. In particular, N-ALM obtains primal objective values comparable to those of d-ADMM, while generally achieving smaller relative residuals. The advantage of N-ALM is particularly evident in terms of computational efficiency. While d-ADMM may require a substantially larger number of iterations and, in several cases, reaches the prescribed iteration limit without satisfying its stopping criterion, N-ALM solves the corresponding problems within considerably shorter running times. This difference is especially evident for the larger-scale instances, where d-ADMM requires several minutes or even more than ten minutes, whereas N-ALM typically completes the computation within a much shorter time. Notably, for the pyrim5, triazines4, and bodyfat data sets, d-ADMM fails to satisfy its stopping criterion within the prescribed iteration limit, whereas N-ALM successfully solves all three instances in only a few tens of seconds. These results highlight the computational advantage of N-ALM for high-dimensional problems with $n\ll p$, particularly when d-ADMM struggles to reach the required solution accuracy within a reasonable computational budget.
     
	\section{ Conclusion}\label{con}
	In this paper, we proposed a square-root clustered Lasso penalized method for high-dimensional linear regression. By combining square-root loss and the clustered Lasso penalty, we obtained a flexible model with tuning parameter selection independent of the unknown noise level. We established weak decomposability of the clustered Lasso penalty and a noise-level-free tuning strategy under Gaussian noise. We further established finite-sample prediction error guarantees for the resulting estimator under suitable regularity conditions.
	We noted that the objective function was nonsmooth, which introduced substantial computational difficulties. To address this issue, we developed an efficient semismooth Newton-based proximal augmented Lagrangian algorithm (N-ALM) for the dual problem and established global convergence and a local linear rate for the distance to the KKT solution set under the stated conditions, with a superlinear rate when the penalty parameter tends to infinity. Numerical results on real datasets showed that N-ALM efficiently computed accurate solutions of large-scale SRCL problems and substantially reduced running times compared with d-ADMM.
	For future work, we think that it should be worthwhile to explore more statistical properties of the proposed estimator, including support recovery, oracle inequalities and asymptotic distributions. 
	
	\section*{Declarations}
	
	\textbf{Ethics approval and consent to participate.} Not applicable.
	
	\noindent\textbf{Consent for publication.} Not applicable.
	
	\noindent\textbf{Funding.} The work of Y. Xiao is supported by the National Natural Science Foundation of China (Grant No. 12471307 and 12271217).
	
	\noindent\textbf{Availability of data and materials.} Only public datasets, in \href{https://www.csie.ntu.edu.tw/~cjlin/libsvmtools/datasets/}{www.csie.ntu.edu.tw/cjlin/libsvmtools/datasets/}, are used.
	
	\noindent\textbf{Competing interests.} The authors declare that they have no conflict of interest.
	
	\noindent\textbf{Authors’ contributions.} S. Wang conducted the literature review and model derivation, and developed the algorithm framework as the first author. M. Shi carried out the theoretical analysis. Y. Xiao conceptualized the research, supervised the study, acquired funding, drafted the main manuscript, and acted as the corresponding author.	All authors reviewed and approved the final manuscript.

	\bibliographystyle{spbasic}
	\bibliography{hcf2}
	
\end{document}